\numberwithin{equation}{section}
\newtheorem{theorem}{Theorem}[section]
\newtheorem{lemma}[theorem]{Lemma}
\newtheorem{proposition}[theorem]{Proposition}
\newtheorem{corollary}[theorem]{Corollary}
\newtheorem{assumption}[theorem]{Assumption}
\theoremstyle{definition}
\newtheorem{definition}[theorem]{Definition}
\newtheorem{remark}[theorem]{Remark}
\def\E{{\mathbb E}}
\def\R{{\mathbb R}}
\def\N{{\mathbb N}}
\def\PP{{\mathbb P}}
\def\FF{{\mathbb F}}
\def\P{{\mathcal P}}
\def\QQ{{\mathbb Q}}
\def\W{{\mathcal W}}
\def\F{{\mathcal F}}
\def\tr{{\mathrm{Tr}}}
\def\Var{{\mathrm{Var}}}
\def\Pprod{\P_{\mathrm{pr}}}
\def\Punif{\P_{\mathrm{Unif}}([0,1] \times \R)}
\def\@tocline#1#2#3#4#5#6#7{\relax
	\ifnum #1>\c@tocdepth 
	\else
	\par \addpenalty\@secpenalty\addvspace{#2}%
	\begingroup \hyphenpenalty\@M
	\@ifempty{#4}{%
		\@tempdima\csname r@tocindent\number#1\endcsname\relax
	}{%
		\@tempdima#4\relax
	}%
	\parindent\z@ \leftskip#3\relax \advance\leftskip\@tempdima\relax
	\rightskip\@pnumwidth plus4em \parfillskip-\@pnumwidth
	#5\leavevmode\hskip-\@tempdima
	\ifcase #1
	\or\or \hskip 1em \or \hskip 2em \else \hskip 3em \fi%
	#6\nobreak\relax
	\dotfill\hbox to\@pnumwidth{\@tocpagenum{#7}}\par
	\nobreak
	\endgroup
	\fi}
\title{Mean field approximations via log-concavity}
\author{Daniel Lacker, Sumit Mukherjee, and Lane Chun Yeung} 
\thanks{D.L.\ and L.C.Y.\ are partially supported by the AFOSR Grant FA9550-19-1-0291 and the NSF CAREER award DMS-2045328. S.M.\ is partially supported by NSF grants DMS-1712037 and DMS-2113414.}
\address{Department of Industrial Engineering \& Operations Research, Columbia University}
\email{daniel.lacker@columbia.edu, l.yeung@columbia.edu}
\address{Department of Statistics, Columbia University}
\email{sm3949@columbia.edu}
\begin{document}
	\begin{abstract}
	We propose a new approach to deriving quantitative mean field approximations for any probability measure $P$ on $\R^n$ with density proportional to $e^{f(x)}$, for $f$ strongly  concave. We bound the mean field approximation for the log partition function $\log \int e^{f(x)}dx$ in terms of $\sum_{i \neq j}\E_{Q^*}|\partial_{ij}f|^2$, for a semi-explicit probability measure $Q^*$ characterized as the unique mean field optimizer, or equivalently as the minimizer of the relative entropy $H(\cdot\,|\,P)$ over product measures. This notably does not involve metric-entropy or gradient-complexity concepts which are common in prior work on nonlinear large deviations. Three implications are discussed, in the contexts of continuous Gibbs measures on large graphs, high-dimensional Bayesian linear regression, and the construction of decentralized near-optimizers in high-dimensional stochastic control problems. Our arguments are based primarily on functional inequalities and the notion of displacement convexity from optimal transport.\\
	\\
	\textbf{Keywords:} Mean field approximation, log-concavity, Gibbs measures, Bayesian regression, mean field control, log-Sobolev inequality, Poincar\'e inequality, displacement convexity\\
	\textbf{AMS MSC 2020:} 60F10; 39B62
\end{abstract}

\maketitle

\section{Introduction} \label{sec: setup}

  At the center of the recent theory of \emph{nonlinear large deviations} is the problem of justifying the \emph{mean field approximation} for the partition function of a Gibbs measure. Given a (reference) probability measure $\mu$ on $\R$, suppose a probability measure $P$ on $\R^n$ takes the form
\begin{align*}
	 P(dx) = Z^{-1} e^{f(x)}\mu^{\otimes n}(dx),
\end{align*}
for a function $f : \R^n \to \R$ and normalizing constant $Z$, where $\mu^{\otimes n}$ denotes the $n$-fold product measure. A recurring problem in diverse applications is the approximation of the often intractable \emph{partition function} $Z$. It obeys the well-known Gibbs variational principle 
\begin{align}
	\log Z = \log \int_{\R^n} e^f\,d\mu^{\otimes n} = \sup_{Q \in \P(\R^n)} \left( \int_{\R^n} f\,dQ - H(Q\,|\,\mu^{\otimes n})\right), \label{Gibbs}
\end{align}
where $\P(\R^n)$ is the set of probability measures on $\R^n$, and $H$ denotes the relative entropy
\begin{align*}
	H(Q\,|\,Q') \coloneqq \int_{\R^n} \frac{dQ}{dQ'}\log\frac{dQ}{dQ'}\,dQ' \  \text{ if } Q \ll Q', \qquad H(Q\,|\,Q') \coloneqq \infty \ \text{ if } Q \not\ll Q'.
\end{align*}
Note that $Q=P$ is the unique optimizer in \eqref{Gibbs}.
Letting $\Pprod(\R^n)$ denote the set of product measures $Q = Q_1 \times \cdots \times Q_n$ in $\P(\R^n)$, the \emph{mean field approximation} is
\begin{align}
	\log \int_{\R^n} e^f\,d\mu^{\otimes n} \approx \sup_{Q \in \Pprod(\R^n)} \left( \int_{\R^n} f\,dQ - H(Q\,|\,\mu^{\otimes n})\right). \label{MFapprox-heuristic}
\end{align}
In the cases studied in this paper, the left-hand side is expected to be of order $n$; a precise formulation of \eqref{MFapprox-heuristic} is then to find conditions under which the difference is $o(n)$, so that the mean field approximation becomes asymptotically correct at the leading order. Note that the right-hand side of  \eqref{MFapprox-heuristic} is trivially a lower bound for the left, because of \eqref{Gibbs}, and it is only the upper bound that incurs an error which must be estimated.

The groundbreaking work of \cite{chatterjee-dembo}, motivated by applications to subgraph counts in sparse random graphs, showed how to justify the mean field approximation in the case that $\mu^{\otimes n}$ is the uniform measure on the hypercube $\{-1,1\}^n$. Their key assumption is that the gradient of $f$ has \emph{low complexity}, as measured by the metric entropy of the range $\nabla f(\{-1,1\}^n)$. A number of subsequent papers have since refined this approach and results on subgraph counts \cite{cook-dembo,cook2021regularity,lubetzky2017variational}, in addition to other noteworthy applications such as Ising models \cite{augeri1,augeri2,basak-mukherjee,deb-mukherjee,eldan1,jain2019mean}.
Most applications thus far involve discrete $\mu$, but the theory has been extended to compactly supported measures \cite{augeri1,austin2019structure,yan2020nonlinear}.
Alternative and often more convenient estimates have appeared, still based on ``gradient complexity" but quantifying it in a different way, eschewing covering number estimates in favor of the simpler and weaker Gaussian-width \cite{eldan1,eldan2020taming,eldan-gross} or Rademacher-width \cite{augeri2}. 

	In this paper, we propose an alternative approach to the mean field approximation, designed most notably for the case where $f$ is concave and the reference measure $\mu$ is strongly log-concave (see Theorem \ref{th:main-intro} and Corollary \ref{co:refmeas}).
	In particular, we deal with continuous $\mu$ of unbounded support, which covers a rather different host of applications, discussed in Section \ref{se:applications}, compared to the somewhat more discrete-oriented prior literature.
	Our approach is based on a semi-explicit representation for the mean field optimizer $Q^*$ in \eqref{MFapprox-heuristic}, which we show to be unique as soon as $P$ is strictly log-concave, and which is in fact also the unique minimizer of $H(\cdot\,|\,P)$ over product measures.
	We control the error in the approximation \eqref{MFapprox-heuristic} by a constant times $\E_{Q^*}\sum_{i \neq j} |\partial_{ij}f|^2$, which is typically much simpler to work with compared to the aforementioned notions of gradient complexity.
	Eldan \cite{eldan1,eldan2020taming} and Austin \cite{austin2019structure} also analyze the mean field approximation by  approximating $P$ by product measures in entropy, but our methods and bounds are very different from theirs; notably, they approximate $P$ not by a single product measure but by a mixture, which is natural when the mean field optimizer is not unique, as is explained well in \cite{eldan1}. The uniqueness of  the mean field optimizer in our  setting means that we expect $P$ to concentrate around a single \emph{pure state}, rather than a mixture of states. 

	In the rest of this section, we describe our general results on mean field approximations for log-concave measures, 
	along with some related ideas and generalizations,
	with proofs deferred to Section \ref{se:proof-mainthm}.
 Section \ref{se:applications} develops three applications: Gibbs measures with heterogeneous interactions, high-dimensional Bayesian linear regression, and high-dimensional stochastic control problems.

\subsection{Main results}

Recall for $\kappa \in \R$ that a function $f : \R^n \to \R \cup \{-\infty\}$ is said to be \emph{$\kappa$-concave} if $x\mapsto f(x) + \frac{\kappa}{2}|x|^2$ is concave.
If $f$ is finite-valued and $C^2$, i.e., twice continuously differentiable, then $f$ is $\kappa$-concave if and only if $\nabla^2f(x) \le -\kappa I$ in semidefinite order, for each $x \in \R^n$. We say that a probability measure $P$ on $\R^n$ is $\kappa$-log-concave if it takes the form $P(dx)=e^{f(x)}dx$ for some $\kappa$-concave function $f$.
We will work with the (negative of the) differential entropy
\begin{equation*}
	H(Q) := \int_{\R^n} Q(x) \log Q(x) \, dx, \label{eq: diff.entropy}
\end{equation*}
for an absolutely continuous probability measure $Q(dx)=Q(x)dx$ on a Euclidean space, well-defined in $(-\infty,\infty]$ whenever the negative part of $Q\log Q$ is integrable; we adopt the convention that $H(Q)=\infty$ if $Q$ is not absolutely continuous, or if $(Q\log Q)^-$ is not integrable. 
Let $\Pprod(\R^n)$ denote the set of product measures on $\R^n$. Let $X =(X_1,\ldots,X_n) : \R^n \to \R^n$ denote the identity map, so that we may write $\E_Q[g(X)]=\int_{\R^n} g\,dQ$ for the expectation under $Q$.

\begin{theorem} \label{th:main-intro}
	Consider a $C^2$ and $\kappa$-log-concave probability measure $P(dx)=Z^{-1}e^{f(x)}dx$,  for some $ \kappa > 0 $. Assume there exist $ c_1 \ge 0 $ and $0 \le c_2  < \kappa/2$ such that $ |f(x)| \le  c_1 e^{c_2 |x|^2} $ for all $ x \in \R^n $. Then the following conclusions hold:
	\begin{enumerate}[(1)]
		\item \label{main.fp} There exists a unique product measure $Q^* = Q^*_1 \times \cdots \times Q^*_n \in \Pprod(\R^n)$  with strictly positive density a.e.\ satisfying  $f \in L^1(Q^*)$ and the fixed point equation
		\begin{align}
			Q^*_i(dx_i) = Z_i^{-1} \exp \big(\E_{Q^*}[f(X)\,|\,X_i=x_i]\big)\,dx_i, \qquad  Z_i > 0, \ i=1,\ldots,n.  \label{fixedpoint-mainthm}
		\end{align}
		\item $Q^*$ is $\kappa$-log-concave. \label{Q^*-log-concave}
		\item \label{main.unique}$Q^*$ is the unique optimizer in
		\begin{equation}
			\sup_{Q \in \Pprod(\R^n)} \left( \int_{\R^n} f\,dQ- H(Q)\right).  \label{MFopt-mainthm}
		\end{equation}
		\item \label{main.bound}If we define
		\begin{equation*}
			R_f  := \log \int_{\R^n}  e^{f(x)}\,dx -  \sup_{Q \in \Pprod(\R^n) } \left( \int_{\R^n} f\,dQ - H(Q)\right), 
		\end{equation*}
		then
		\begin{equation}
			0 \le R_f \le  \frac{1}{2\kappa} \E_{Q^*}\sum_{i=1}^n  \Var_{Q^*}(\partial_i f(X)\,|\,X_i)  \le \frac{1}{\kappa^2}\sum_{1\le i<j\le n} \E_{Q^*}[|\partial_{ij}f(X)|^2].  \label{MFapprox-mainthm}
		\end{equation}
	\end{enumerate}
\end{theorem}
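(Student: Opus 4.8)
The plan is to reformulate everything in terms of relative entropy. For any $Q\in\P(\R^n)$ with $f\in L^1(Q)$ and $H(Q)<\infty$ one has the elementary identity $\int f\,dQ - H(Q) = \log Z - H(Q\,|\,P)$, since $P=Z^{-1}e^f$. Consequently, maximizing the mean field functional \eqref{MFopt-mainthm} over $\Pprod(\R^n)$ is the same as minimizing $Q\mapsto H(Q\,|\,P)$ over $\Pprod(\R^n)$; in particular the two problems have the same optimizers, which disposes of the ``or equivalently'' assertion, and $R_f = \inf_{Q\in\Pprod(\R^n)}H(Q\,|\,P)$, so that $R_f\ge 0$ and, once $Q^*$ is identified as the minimizer, $R_f = H(Q^*\,|\,P)$. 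It is this last expression that we will bound in part~\eqref{main.bound}.

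For parts \eqref{main.fp}, \eqref{Q^*-log-concave}, \eqref{main.unique}, the key structural fact is that $\Pprod(\R^n)$ is geodesically convex for the quadratic Wasserstein distance $W_2$: optimal transport between product measures tensorizes, so the $W_2$-geodesic joining $\prod_i\mu_i$ and $\prod_i\nu_i$ is the product of the one-dimensional geodesics and stays in $\Pprod(\R^n)$. Since $P$ is $\kappa$-log-concave with $\kappa>0$, the functional $Q\mapsto H(Q\,|\,P)$ is $\kappa$-displacement convex (hence, as $\kappa>0$, strictly so): combine McCann's displacement convexity of the differential entropy $H(\cdot)$ with the $\kappa$-displacement convexity of $Q\mapsto\int(-f)\,dQ$, valid because $-f$ is $\kappa$-convex. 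Strict displacement convexity on the geodesically convex set $\Pprod(\R^n)$ forces at most one critical point, hence at most one solution of \eqref{fixedpoint-mainthm} (a solution being precisely a measure at which the first variation of $H(\cdot\,|\,P)$ under a change of a single marginal vanishes), and that solution, if it exists, is the unique minimizer; this gives \eqref{main.unique}. Existence follows from a direct-method argument: sublevel sets of $H(\cdot\,|\,P)$ have uniformly bounded second moments (by Talagrand's inequality for $\kappa$-log-concave $P$), hence are $W_2$-precompact, and $H(\cdot\,|\,P)$ is lower semicontinuous, so a minimizer $Q^*\in\Pprod(\R^n)$ exists; writing out the one-dimensional Gibbs principle for each marginal with the others frozen shows $Q^*$ solves \eqref{fixedpoint-mainthm}, the growth bound $|f|\le c_1 e^{c_2|x|^2}$ with $c_2<\kappa/2$ guaranteeing $Z_i<\infty$, $f\in L^1(Q^*)$, and that $h_i(x_i):=\E_{Q^*}[f(X)\,|\,X_i=x_i]$ is everywhere finite so $Q^*_i\propto e^{h_i}$ has strictly positive density. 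Finally \eqref{Q^*-log-concave} is immediate once \eqref{main.fp} holds: because $Q^*$ is a product measure, $h_i$ is an average of the $\kappa$-concave maps $x_i\mapsto f(x_i,x_{-i})$ and is therefore $\kappa$-concave, so each $Q^*_i$ is $\kappa$-log-concave in dimension one and $Q^*$ is $\kappa$-log-concave on $\R^n$.

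For part \eqref{main.bound} the chain of inequalities rests on two functional inequalities together with $R_f=H(Q^*\,|\,P)$. First, since $P=Z^{-1}e^f$ with $-\nabla^2 f\ge\kappa I$, the Bakry--\'Emery criterion (equivalently, the HWI inequality) gives the log-Sobolev inequality for $P$ with constant $\kappa$, so $R_f = H(Q^*\,|\,P)\le\frac{1}{2\kappa}I(Q^*\,|\,P)$, where $I(Q^*\,|\,P)=\E_{Q^*}|\nabla\log(dQ^*/dP)|^2$ is the relative Fisher information, finite under the standing hypotheses. Next we compute $I(Q^*\,|\,P)$: since $dQ^*/dP\propto \exp(\sum_i h_i(x_i)-f(x))$, we have $\partial_{x_i}\log(dQ^*/dP)(x)=h_i'(x_i)-\partial_i f(x)$, and differentiating the fixed point equation under the integral (legitimate by the growth bound) yields $h_i'(x_i)=\E_{Q^*}[\partial_i f(X)\,|\,X_i=x_i]$; hence the $i$-th term of $I(Q^*\,|\,P)$ equals $\E_{Q^*}\big[(\partial_i f(X)-\E_{Q^*}[\partial_i f(X)\,|\,X_i])^2\big]=\E_{Q^*}[\Var_{Q^*}(\partial_i f(X)\,|\,X_i)]$, the middle quantity in \eqref{MFapprox-mainthm}. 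For the last inequality, under $Q^*$ the conditional law of $X_{-i}$ given $X_i=x_i$ is the product $\prod_{j\neq i}Q^*_j$ of $\kappa$-log-concave measures, which satisfies the Poincar\'e inequality with constant $1/\kappa$ (Brascamp--Lieb and tensorization of the Poincar\'e constant over products); applied to $x_{-i}\mapsto\partial_i f(x_i,x_{-i})$ this gives $\Var_{Q^*}(\partial_i f(X)\,|\,X_i=x_i)\le\frac1\kappa\sum_{j\neq i}\E_{Q^*}[|\partial_{ij}f(X)|^2\,|\,X_i=x_i]$, and averaging over $x_i$ and summing over $i$ produces $\frac{1}{2\kappa}\sum_i\E_{Q^*}[\Var_{Q^*}(\partial_i f(X)\,|\,X_i)]\le\frac1{\kappa^2}\sum_{i<j}\E_{Q^*}[|\partial_{ij}f(X)|^2]$.

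The step I expect to be the main obstacle is the existence half of \eqref{main.fp}: arranging simultaneously that the $W_2$-coercivity and lower semicontinuity deliver a minimizer \emph{inside} $\Pprod(\R^n)$, that the first-order conditions are genuinely valid there, and that the resulting marginals have strictly positive densities with $f\in L^1(Q^*)$ and finite normalizing constants — all of which force careful use of the strict log-concavity ($\kappa>0$) and of the growth condition on $f$; a preliminary truncation or mollification of $f$ may also be needed to justify differentiation under the integral and the finiteness of $I(Q^*\,|\,P)$ before passing to the limit. By contrast, once the fixed point equation and \eqref{Q^*-log-concave} are in hand, part \eqref{main.bound} is essentially the Fisher-information computation above glued to two standard functional inequalities.
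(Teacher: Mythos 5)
Your plan follows the paper's route almost exactly: reformulate \eqref{MFopt-mainthm} as the entropic projection $\inf_{Q\in\Pprod(\R^n)}H(Q\,|\,P)$, get existence by the direct method, uniqueness by strict displacement convexity along coordinate-wise monotone interpolations (which indeed stay in $\Pprod(\R^n)$), derive \eqref{fixedpoint-mainthm} as the coordinate-wise Gibbs first-order condition, read off \eqref{Q^*-log-concave} from the fixed point equation, and prove \eqref{main.bound} by log-Sobolev for $P$ plus the tensorized Poincar\'e inequality for $\prod_{j\neq i}Q^*_j$; the constants come out correctly.

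The one place where your argument, as written, does not close is the assertion that strict displacement convexity ``forces at most one critical point, hence at most one solution of \eqref{fixedpoint-mainthm}.'' The fixed point equation is the first-order condition for \emph{linear} (mixture) perturbations of a single marginal with the others frozen; it is not automatic that such a point is also critical for \emph{displacement} perturbations, which is what strict displacement convexity sees. The paper bridges exactly this gap in Proposition \ref{pr:fp-to-optimizer}: given a fixed point $Q$ and the minimizer $Q^*$, it computes the right-derivative of the functional along the displacement interpolation from $Q$ to $Q^*$ and shows it vanishes, using the identity $\hat f_i'Q_i-Q_i'=0$ (valid because $Q_i\propto e^{\hat f_i}$) together with an integration by parts $-\int_\R(T_i'-1)Q_i\,dx_i=\int_\R(T_i-x_i)Q_i'\,dx_i$ whose boundary terms must be shown to vanish via moment bounds from $\kappa$-log-concavity; concavity of the functional along the geodesic then gives $G(Q^*)\le G(Q)$. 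You flag this region as the main obstacle, so the plan is sound, but this derivative computation is the substantive missing step. A smaller imprecision: Talagrand's inequality gives $W_2$-\emph{boundedness}, not $W_2$-precompactness, of the entropy sublevel sets; the clean fix (and the paper's) is to use weak compactness of sublevel sets of $H(\cdot\,|\,P)$ together with weak closedness of $\Pprod(\R^n)$ and weak lower semicontinuity.
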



The supremum in \eqref{MFopt-mainthm} is finite, as we will see in Lemma \ref{le:MFsup-finite}. Also, as will be seen in the proof of Proposition \ref{pr:fp-to-optimizer}, our assumptions ensure that $ \partial_if (x_i, \cdot) \in L^1\big(\prod_{j \neq i}Q^*_j\big)$, so the conditional variance in \eqref{MFapprox-mainthm} is well-defined in $ [0, \infty] $. The final quantity in \eqref{MFapprox-mainthm} controlling our mean field approximation error involves only the cross-derivatives $i \neq j$, which are insensitive to additively separable perturbations  $f(x) \to f(x) + \sum_{i=1}^n\tilde{f}_i(x_i)$. On the other hand, the measure $Q^*$ is sensitive to these perturbations, but in the tractable sense that $Q^*_i(dx_i)$ must be multiplied by $\exp \tilde{f}_i(x_i)$ (and a new normalizing constant). In particular,  both upper bounds in \eqref{MFapprox-mainthm} vanish if $f$ is already additively separable, i.e., if $P$ is a product measure.

In Theorem \ref{th:main-intro}, the measure $Q^*$ is defined implicitly, which can make bounding $R_f$ difficult. 
In the simplest case where $\nabla^2f$ is bounded, we need no knowledge of $Q^*$ to obtain
\begin{align*}
	R_f \le \frac{1}{\kappa^2}\sup_{x \in \R^n}\sum_{1 \le i < j \le n} |\partial_{ij}f(x)|^2,
\end{align*}
which is sharp enough for many applications.
But even when $\nabla^2f$ is unbounded, we can take advantage of the fact that $Q^*$ is $\kappa$-log-concave by Theorem \ref{th:main-intro}(2), which implies in particular that it has finite moments of all orders controlled in terms of $\kappa$.

	A guiding example is the class of Gibbs measures with pairwise interactions of the form
	\begin{align}
		f(x) &= \sum_{i=1}^n V(x_i) + \sum_{1 \le i < j \le n} J_{ij} K(x_i-x_j), \label{def:Gibbs-potential}
	\end{align}
	where $V$ is $\kappa$-concave, $K$ is even and concave, and $J$ is a symmetric matrix with nonnegative entries. Then $\partial_{ij}f(x)=-J_{ij}K''(x_i-x_j)$ for $i \neq j$, and for $K''$ bounded we immediately deduce $R_f \le \tr(J^2)\|K''\|_\infty^2/2\kappa^2$ from Theorem \ref{th:main-intro}.
	Corollary \ref{cor:quadratic} below proves a similar $O(\tr(J^2))$ bound merely assuming that $K''$ has at most exponential growth, plus a symmetry assumption.
	Since $\log\int_{\R^n} e^f dx$ is order $n$ in this case, we obtain a successful mean field approximation whenever $J$ satisfies $\tr(J^2)=o(n)$, a well-established condition in the literature. We postpone to Section \ref{se:Gibbs} further discussion of this class of examples.

As a first corollary of Theorem \ref{th:main-intro}, we deduce the following non-asymptotic law of large numbers for the empirical measure.

\begin{corollary} \label{co:linearstat-concentration}
	Under the assumptions of Theorem \ref{th:main-intro}, for any 1-Lipschitz function $\varphi : \R \to \R$, we have
	\begin{equation}
		\E_P\bigg[ \bigg(\frac{1}{n}\sum_{i=1}^n\varphi(X_i) - \frac{1}{n}\sum_{i=1}^n\E_{Q^*}[\varphi(X_i)]\bigg)^2\bigg]  \le \frac{(1 +  \sqrt{2R_f})^2}{\kappa n} . \label{ineq:weakLLN}
	\end{equation}
\end{corollary}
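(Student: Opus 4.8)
The plan is to measure the deviation in $L^2(P)$ and to split it into a \emph{fluctuation} part, controlled by a Poincar\'e inequality for $P$, and a \emph{bias} part, controlled by a transportation-cost inequality comparing $P$ and $Q^*$. Write $\Phi(x):=\frac1n\sum_{i=1}^n\varphi(x_i)$, regarded as a function on $\R^n$, and set $m:=\frac1n\sum_{i=1}^n\E_{Q^*}[\varphi(X_i)]=\E_{Q^*}[\Phi]$. Since $\varphi$ is $1$-Lipschitz it has at most linear growth, and since both $P$ and $Q^*$ are $\kappa$-log-concave (the latter by Theorem~\ref{th:main-intro}(2)) they have finite moments of all orders; in particular $\Phi\in L^2(P)$ and $m$ is finite. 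By the triangle inequality in $L^2(P)$,
\begin{align*}
	\big\|\Phi-m\big\|_{L^2(P)}\le\sqrt{\Var_P(\Phi)}+\big|\E_P[\Phi]-\E_{Q^*}[\Phi]\big|,
\end{align*}
and squaring the final bound will yield \eqref{ineq:weakLLN}.

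For the fluctuation term I would invoke the Brascamp--Lieb (equivalently Bakry--\'Emery) inequality: a $\kappa$-log-concave measure with $\kappa>0$ satisfies the Poincar\'e inequality $\Var_P(g)\le\frac1\kappa\E_P|\nabla g|^2$. Because $\varphi$ is $1$-Lipschitz, $|\nabla\Phi(x)|^2=\frac1{n^2}\sum_{i=1}^n|\varphi'(x_i)|^2\le\frac1n$ for a.e.\ $x$ (mollifying $\varphi$ first if it is not $C^1$), so $\Var_P(\Phi)\le\frac1{\kappa n}$.

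For the bias term the key identity is $R_f=H(Q^*\,|\,P)$. Indeed, for any product measure $Q$, using $P(dx)=Z^{-1}e^{f(x)}dx$ one computes $H(Q\,|\,P)=\log Z-\big(\int f\,dQ-H(Q)\big)$, and by Theorem~\ref{th:main-intro}(3) the measure $Q^*$ maximizes $\int f\,dQ-H(Q)$ over $\Pprod(\R^n)$, so evaluating at $Q=Q^*$ gives $H(Q^*\,|\,P)=\log Z-\sup_{Q\in\Pprod(\R^n)}\big(\int f\,dQ-H(Q)\big)=R_f$. Since $P$ is $\kappa$-log-concave it satisfies the logarithmic Sobolev inequality with constant $1/\kappa$ and hence Talagrand's $T_2$ inequality $W_2(\nu,P)^2\le\frac2\kappa H(\nu\,|\,P)$ for every $\nu$; with $\nu=Q^*$ this gives $W_2(P,Q^*)\le\sqrt{2R_f/\kappa}$. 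Finally, $\Phi$ is $\frac1{\sqrt n}$-Lipschitz on $\R^n$ (Cauchy--Schwarz), so Kantorovich--Rubinstein duality gives $|\E_P[\Phi]-\E_{Q^*}[\Phi]|\le\frac1{\sqrt n}W_1(P,Q^*)\le\frac1{\sqrt n}W_2(P,Q^*)\le\sqrt{2R_f/(\kappa n)}$.

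Combining the two estimates, $\|\Phi-m\|_{L^2(P)}\le\frac1{\sqrt{\kappa n}}+\sqrt{\frac{2R_f}{\kappa n}}=\frac{1+\sqrt{2R_f}}{\sqrt{\kappa n}}$, and squaring yields \eqref{ineq:weakLLN}. I expect the only genuinely nonroutine point to be the identity $R_f=H(Q^*\,|\,P)$, which rests on the variational characterization of $Q^*$ in Theorem~\ref{th:main-intro}(3); the remaining work is checking the (standard) integrability hypotheses behind the Poincar\'e, log-Sobolev/Talagrand, and Kantorovich--Rubinstein statements, which follow from the $\kappa$-log-concavity of $P$ and $Q^*$ together with the finiteness of $R_f$ (Lemma~\ref{le:MFsup-finite}). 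As an aside, the Pythagorean identity $\E_P[(\Phi-m)^2]=\Var_P(\Phi)+(\E_P[\Phi]-m)^2$ would give the slightly sharper bound $(1+2R_f)/(\kappa n)$.
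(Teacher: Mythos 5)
Your proof is correct, and it follows the paper's decomposition exactly: split the $L^2(P)$ deviation into a variance term handled by the Poincar\'e inequality for $P$ and a bias term handled by the identity $R_f=H(Q^*\,|\,P)$ together with Talagrand's $T_2$ inequality. The one place you diverge is the bias estimate: the paper first disintegrates, using the subadditivity inequality \eqref{W2subadditivity} with $k=1$ to get $\frac1n\sum_i \W_2^2(P_i,Q^*_i)\le \frac1n\W_2^2(P,Q^*)$ and then applies one-dimensional Kantorovich duality coordinate by coordinate, whereas you apply Kantorovich--Rubinstein duality directly on $\R^n$, exploiting that $\Phi$ is $n^{-1/2}$-Lipschitz for the Euclidean norm. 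Both yield $A_2^2\le 2R_f/(\kappa n)$; your route is slightly more direct for this corollary since it bypasses the subadditivity lemma entirely (that lemma earns its keep elsewhere in the paper, in quantifying the $\W_2$-closeness of $k$-particle marginals to products). Your closing remark is also right: since the cross term $\E_P[(\Phi-\E_P\Phi)(\E_P\Phi-m)]$ vanishes, the Pythagorean identity gives the sharper bound $(1+2R_f)/(\kappa n)$, which the paper forgoes in favor of the squared triangle inequality.
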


\begin{remark} 
	Corollary \ref{co:linearstat-concentration} can be interpreted as a form of concentration of the empirical measure $\frac{1}{n}\sum_{i=1}^n \delta_{X_i}$ around the measure $\frac{1}{n}\sum_{i=1}^n Q^*_i$. Alternatively, the Poincar\'e inequality for $P$ implies $\Var_P(\frac{1}{n}\sum_{i=1}^n\varphi(X_i)) \le 1/\kappa n$ for 1-Lipschitz $\varphi$, which in turn implies a form of concentration of $\frac{1}{n}\sum_{i=1}^n \delta_{X_i}$ around its mean $\frac{1}{n}\sum_{i=1}^n P_i$, where $P_i$ is the $i^\text{th}$ marginal of $P$. However, the latter is normally not as useful, because the marginals of $P$ are typically not as tractable as the various characterizations of $Q^*$ provided by Theorem \ref{th:main-intro}.
\end{remark}

It is often convenient to work with a probability measure as a reference measure, in place of Lebesgue measure, as is common in the literature on mean field approximations (see for example \cite{augeri1,austin2019structure,chatterjee-dembo,eldan1,yan2020nonlinear}). Theorem \ref{th:main-intro} implies a similar result in terms of reference probability measures.

\begin{corollary} \label{co:refmeas}
	Let $V_i : \R \to \R$ be $C^2$ and $\kappa$-concave  for some $\kappa > 0$, such that $\rho_i(dx)=e^{V_i(x)}dx$ is a probability measure, for $i=1,\ldots,n$. Let $\rho = \rho_1 \times \cdots \times \rho_n$.
	Let $g: \R^n \to \R$ be  $C^2$ and concave. Assume there exist $ c_1\ge 0 $ and $0 \le c_2  <  \kappa /2$ such that $ |g(x)| \le c_1 e^{c_2 |x|^2} $ for all $ x \in \R^n $.  Then the following conclusions hold: 
	\begin{enumerate}[(1)]
		\item There exists a unique product measure $Q^* = Q^*_1 \times \cdots \times Q^*_n \in \Pprod(\R^n)$  with strictly positive density a.e.\ satisfying  $g \in L^1(Q^*)$ and 
		\begin{align}
			Q^*_i(dx_i) = Z_i^{-1} \exp \big(\E_{Q^*}[g(X)\,|\,X_i=x_i]\big)\,\rho_i(dx_i), \qquad  Z_i > 0, \ i=1,\ldots,n.  \label{fixedpoint-refmeas}
		\end{align}

		\item $Q^*$ is $\kappa$-log-concave. \label{Q^*-log-concave-refmeas}

		\item $Q^*$ is the unique optimizer in
		\begin{equation}
			\sup_{Q \in \Pprod(\R^n)} \left( \int_{\R^n} g\,dQ- H(Q\,|\,\rho )\right).  \label{MFopt-refmeas}
		\end{equation}
		\item 
		If we define
		\begin{equation*}
			R^\rho_g  := \log \int_{\R^n}  e^g\,\,d\rho -  \sup_{Q \in \Pprod(\R^n) } \left( \int_{\R^n} g\,dQ - H(Q\,|\,\rho )\right),
		\end{equation*}
		then
		\begin{equation}
			0 \le R^\rho_g \le  \frac{1}{2\kappa} \E_{Q^*}\sum_{i=1}^n  \Var_{Q^*}(\partial_i g(X)\,|\,X_i)  \le \frac{1}{\kappa^2}\sum_{1\le i<j\le n} \E_{Q^*}[|\partial_{ij}g(X)|^2]. \label{MFapprox-refmeas}
		\end{equation}
	\end{enumerate}
\end{corollary}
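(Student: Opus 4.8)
The plan is to reduce Corollary \ref{co:refmeas} to Theorem \ref{th:main-intro} by absorbing the reference potentials into the objective. Set
$$f(x) := g(x) + \sum_{i=1}^n V_i(x_i),$$
so that the Gibbs measure $P(dx) = Z^{-1}e^{g(x)}\rho(dx)$ equals $Z^{-1}e^{f(x)}dx$ with $Z=\int e^g\,d\rho=\int e^f\,dx$. First I would verify that $f$ satisfies the hypotheses of Theorem \ref{th:main-intro}: it is $C^2$ since $g$ and the $V_i$ are; it is $\kappa$-concave because $\nabla^2 f = \nabla^2 g + \mathrm{diag}(V_1'',\dots,V_n'') \le -\kappa I$, using concavity of $g$ and $\kappa$-concavity of each $V_i$; and it obeys a bound $|f|\le c_1'e^{c_2'|x|^2}$ with $c_2'<\kappa/2$, the contribution of $g$ being given and that of each $V_i$ being controlled from above by the tangent-line inequality $V_i(x)+\tfrac{\kappa}{2}x^2 \le V_i(0)+V_i'(0)x$ (concavity of $x\mapsto V_i(x)+\tfrac{\kappa}{2}x^2$) together with continuity.

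Next I would translate the four conclusions of Theorem \ref{th:main-intro}, applied to this $f$, into those of Corollary \ref{co:refmeas}. The key identity is that for any product measure $Q=Q_1\times\cdots\times Q_n$ with absolutely continuous marginals,
$$H(Q\,|\,\rho) = H(Q) - \sum_{i=1}^n \E_{Q_i}[V_i(X_i)] = H(Q) - \E_Q\Big[\textstyle\sum_{i=1}^n V_i(X_i)\Big],$$
so that $\int g\,dQ - H(Q\,|\,\rho) = \int f\,dQ - H(Q)$; hence \eqref{MFopt-refmeas} and \eqref{MFopt-mainthm} have the same value and the same set of optimizers, and $R^\rho_g = R_f$. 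Likewise $\E_{Q^*}[f(X)\,|\,X_i=x_i] = \E_{Q^*}[g(X)\,|\,X_i=x_i] + V_i(x_i) + \sum_{j\ne i}\E_{Q^*_j}[V_j(X_j)]$, so the fixed-point equation \eqref{fixedpoint-mainthm} for $f$ is exactly \eqref{fixedpoint-refmeas} once the $x_i$-independent constant is absorbed into $Z_i$; the $\kappa$-log-concavity of $Q^*$ is inherited verbatim; and since $\partial_{ij}f=\partial_{ij}g$ for $i\ne j$ while $\partial_i f(X)=\partial_i g(X)+V_i'(X_i)$ differs from $\partial_i g(X)$ by an $X_i$-measurable term, one has $\Var_{Q^*}(\partial_i f(X)\,|\,X_i) = \Var_{Q^*}(\partial_i g(X)\,|\,X_i)$, so \eqref{MFapprox-mainthm} becomes \eqref{MFapprox-refmeas}.

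The point requiring care — and the only real obstacle — is matching the integrability qualifications in the uniqueness statements, since Theorem \ref{th:main-intro}(1) asserts uniqueness among product measures with positive density and $f\in L^1(Q^*)$, while Corollary \ref{co:refmeas}(1) should only require $g\in L^1(Q^*)$ together with \eqref{fixedpoint-refmeas}. I would close this by showing that any product measure $Q'$ satisfying \eqref{fixedpoint-refmeas} is automatically $\kappa$-log-concave: its $i$-th marginal has Lebesgue density proportional to $\exp(h_i(x_i)+V_i(x_i))$ with $h_i(x_i)=\E_{Q'}[g(X)\,|\,X_i=x_i]$, which is concave and (a.e.) finite in $x_i$ as a mixture of concave functions, hence bounded above by an affine function, so that $h_i+V_i$ is $\kappa$-concave; such a marginal has sub-Gaussian tails and therefore integrates $V_i$ (which is dominated above by a downward parabola), giving $f\in L^1(Q')$ and a genuine competitor for Theorem \ref{th:main-intro}(1). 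The same remark — that the objects entering the argument are the behavior of $g$ and the $\kappa$-log-concavity of the relevant measures, rather than the crude pointwise bound on the separable part — also disposes of any slack in the growth-condition bookkeeping used in the reduction above. An alternative, should the bookkeeping prove awkward, is to repeat the proof of Theorem \ref{th:main-intro} with $\rho$ in place of Lebesgue measure throughout; but the reduction is cleaner.
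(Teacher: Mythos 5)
Your reduction is exactly the paper's: set $f = g + \sum_{i}V_i$, observe that $\int_{\R^n} g\,dQ - H(Q\,|\,\rho) = \int_{\R^n} f\,dQ - H(Q)$ for product $Q$ so that \eqref{MFopt-refmeas} and \eqref{MFopt-mainthm} coincide, identify the two fixed-point problems, and apply Theorem \ref{th:main-intro} together with $\partial_{ij}f=\partial_{ij}g$ for $i\neq j$. The one imprecise step is your verification of the growth hypothesis of Theorem \ref{th:main-intro}: the tangent-line inequality controls each $V_i$ only from \emph{above}, so the two-sided bound $|f|\le c_1'e^{c_2'|x|^2}$ does not follow (a $\kappa$-concave $V_i$ with $e^{V_i}$ integrable may still tend to $-\infty$ faster than any Gaussian) --- a point the paper's own one-paragraph proof also elides; as you yourself observe, that hypothesis is used only to show a.e.\ positivity of the fixed point, which in the present setting follows directly from the form \eqref{fixedpoint-refmeas}, since $\E_{Q^*}[g(X)\,|\,X_i=x_i]$ is finite using the growth bound on $g$ and the $\kappa$-log-concavity of $Q^*$ alone.
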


 For certain symmetric choices of $g$, the bound \eqref{MFapprox-refmeas} is related to the theorems of Cram\'er and Sanov on large deviations, which are settings in which the Gibbs variational principle is well known to be nearly saturated by product measures. 
	For instance, if $g(x) = nG\big(\frac{1}{n}\sum_{k=1}^n x_k \big)$ for some continuous concave $G$, we obtain $R^\rho_g \le \|G''\|_\infty^2/2\kappa^2$, which is certainly $o(n)$ when $G''$ is bounded.

\subsection{Overview and proof ideas} \label{se:proofideas}
We explain here some key ideas behind Theorem \ref{th:main-intro} and its corollaries.
The simple identity 
\begin{align}
	\log \int_{\R^n} e^{f(x)}\,dx - \int_{\R^n} f\,dQ + H(Q) = H(Q\,|\,P) \label{entproj-id0}
\end{align}
is valid for probability measures $Q$ with finite entropy and implies (see Lemma \ref{le:MFsup-finite} for details)
\begin{align}
	\log \int_{\R^n} e^{f(x)}\,dx - \sup_{Q \in \Pprod(\R^n)}\left(\int_{\R^n} f\,dQ - H(Q)\right) = \inf_{Q \in \Pprod(\R^n)} H(Q\,|\,P), \label{entproj-id1}
\end{align}
and also that optimizing \eqref{MFopt-mainthm} is equivalent to optimizing
\begin{align}
	\inf_{Q \in \Pprod(\R^n)} H(Q\,|\,P).  \label{entproj}
\end{align}
That is, $Q^*$ from Theorem \ref{th:main-intro} is the optimizer in \eqref{entproj}.
This can be seen as an \emph{entropic projection}, in the sense of Csiszar \cite{csiszar1975divergence}, onto the set of product measures. A minimizer in \eqref{entproj} always exists, because the set of product measures is weakly closed and $H(\cdot\,|\,P)$ has weakly compact sub-level sets. But uniqueness is not obvious and in fact fails in general, because the set of product measures is not convex.
We establish the uniqueness of the optimizer in Lemma \ref{lem: maximizer unique} in the case where $P$ is strictly log-concave, by exploiting the notion of displacement convexity from the theory of optimal transport, with similarities to the work of McCann \cite{mccann1997convexity}.

Once we know that the optimizer $Q^*$ for \eqref{MFopt-mainthm} takes the form \eqref{fixedpoint-mainthm}, the proof of the mean field approximation \eqref{MFapprox-mainthm} is fairly quick, if we ignore certain technical points:
The right-hand side of the identity \eqref{entproj-id1} is precisely $H(Q^*\,|\,P)$.  We first use the log-Sobolev inequality for $P$, which is ensured by $\kappa$-log-concavity and the famous result of Bakry-\'Emery \cite{bakryemery}, to get
\begin{align*}
	H(Q^*\,|\,P) &\le \frac{1}{2\kappa}\int_{\R^n}\left|\nabla \log \frac{dQ^*}{dP}\right|^2\,dQ^*.
\end{align*}
Since $Q^*=Q^*_1\times \cdots \times Q^*_n$ is a product measure,  the formula \eqref{fixedpoint-mainthm} implies
\begin{align}
	\partial_i \log Q^*(x) = \partial_i\log Q^*_i(x_i) = \partial_i\E_{Q^*}[ f(X)\,|\,X_i=x_i] = \E_{Q^*}[\partial_i f(X)\,|\,X_i=x_i]. \label{eq:diffQ*}
\end{align}
Thus,
\begin{equation*}
	H(Q^*\,|\,P) \le \frac{1}{2\kappa} \E_{Q^*}\sum_{i=1}^n \left(\E_{Q^*}[\partial_i f(X)\,|\,X_i] - \partial_if(X)\right)^2  = \frac{1}{2\kappa}\E_{Q^*}\sum_{i=1}^n \Var_{Q^*}(\partial_i f(X)\,|\,X_i).
\end{equation*}
Differentiating \eqref{eq:diffQ*} again shows easily that $Q^*$ is $\kappa$-log-concave since $f$ is concave. Hence, $Q^*$ and its marginals obey a Poincar\'e inequality, and we deduce
\begin{align*}
	\Var_{Q^*}(\partial_i f(X)\,|\,X_i) \le \frac{1}{\kappa}\sum_{j \neq i}\E_{Q^*}\left[|\partial_{ij}f(X)|^2\,|\,X_i\right].
\end{align*}
Combining the last two inequalities yields \eqref{MFapprox-mainthm}. See Section \ref{sec: generalization} below for a discussion of a generalization of this argument beyond the strongly log-concave case.

The proof of Corollary \ref{co:linearstat-concentration} begins with the observation that the $\kappa$-log-concavity of $P$ in Theorem \ref{th:main-intro} implies the quadratic transport inequality \cite[Theorems 1 and 2]{otto2000generalization}
\begin{align}
	\W_2^2(Q^*,P) \le \frac{2}{\kappa}H(Q^*\,|\,P), \label{T2I}
\end{align}
where $\W_2$ denotes the quadratic Wasserstein distance defined by
\begin{align*}
	\W_2^2(Q^*,P) = \inf_\pi \int_{\R^n \times \R^n} |x-y| ^2\,\pi(dx,dy),
\end{align*}
where the infimum is over $\pi \in \P(\R^n \times \R^n)$ with marginals $Q^*$ and $P$. Combining \eqref{T2I} with the inequality $H(Q^*\,|\,P) \le R_f$ discussed above, we arrive at $\W_2^2(Q^*,P) \le 2R_f/\kappa$. 
The quadratic Wasserstein distance enjoys a useful and fairly well known \emph{subadditivity} inequality, which we prove in Section \ref{subsec: proofs-asymp-indep} for the sake of completeness: If 
$P_S$ denotes the marginal law of $(X_i)_{i \in S}$ under $P$ for a set $S \subset [n]:=\{1,\ldots,n\}$, and similarly for $Q^*_S$, then we have
\begin{align}
	\binom{n}{k}^{-1}\sum_{S \subset [n], \, |S|=k} \W_2^2(Q^*_S,P_S) \le \frac{1}{\lfloor n/k\rfloor}\W_2^2(Q^*,P) \le \frac{2 }{\kappa\lfloor n/k\rfloor}R_f  \le \frac{4k}{n\kappa} R_f \label{W2subadditivity}
\end{align}
for any  $1 \le k \le n$. 
With \eqref{W2subadditivity} in hand, the proof of Corollary \ref{co:linearstat-concentration} is straightforward.
	Moreover, in our cases of interest where $R_f = o(n)$, the bound \eqref{W2subadditivity} quantifies a form of \emph{approximate independence}: Most $k$-particle marginals of $P$ are $\W_2$-close to product measures, if $k=o(n/R_f)$.

	\begin{remark}
		We work throughout the paper with state space $\R$, for simplicity. That is, we study approximations of measures on $\R^n$ by $n$-fold products of measures on $\R$, as opposed to, say, approximations of measures on $(\R^d)^n$ by $n$-fold products of measures on $\R^d$. Most of our arguments, based primarily on convexity and functional inequalities, extend to the case of $\R^d$ or even Riemannian manifolds with lower curvature bounds in the spirit of Bakry-\'Emery \cite{bakryemery,bakry2013analysis}. The only difficulty is in the uniqueness claimed in Theorem \ref{th:main-intro} (proven in Proposition \ref{pr:fp-to-optimizer}), which would require a finer analysis involving regularity of certain optimal transport maps.
	\end{remark}

\subsection{Additional discussion and results}

	The remaining results presented in this section will not be used in the rest of the paper but serve to elaborate on the structure of the main theorem. The reader mainly interested in applications or proofs of the above results may skip to Sections \ref{se:applications} or \ref{se:proof-mainthm}, respectively, with no loss of continuity.

\subsubsection{More on entropic projections}
Reversing the order of arguments in the relative entropy in \eqref{entproj} leads to a very different optimization problem, but  it is instructive to compare the two. 
The infimum
\begin{equation}
	\inf_{Q \in \Pprod(\R^n)}H(P\,|\,Q) \label{entproj-reversed}
\end{equation}
is uniquely attained by taking $Q=P^*:=P_1\times \cdots \times P_n$ to be the product of the marginals of $P$. 
Indeed, from the simple identity $H(P\,|\,Q) = H(P\,|\,P^*) + H(P^*\,|\,Q)$, it follows that $H(P\,|\,Q) \ge H(P\,|\,P^*)$ for all $Q$, with equality if any only if $Q=P^*$. 

The Gaussian case highlights the difference between \eqref{entproj-reversed} and \eqref{entproj}. Suppose $P$ is a centered Gaussian with nonsingular covariance matrix $\Sigma$. In this case it is easy to see that the (unique) minimizer of $H(Q\,|\,P)$ among product measures $Q$ is the centered Gaussian with covariance matrix $\widetilde\Sigma$, where $\widetilde\Sigma^{-1}$ is the diagonal matrix obtained by deleting the off-diagonal entries of $\Sigma^{-1}$. On the other hand, the unique minimizer of $H(P\,|\,Q)$ among product measures $Q$ is the centered Gaussian with covariance matrix $\widehat\Sigma$ obtained by deleting the off-diagonal entries of $\Sigma$.

\subsubsection{Tilts}
A similar bound to Corollary \ref{co:refmeas} is available if one seeks a stronger mean field approximation, in which $\Pprod(\R^n)$ is replaced by the sub-class of product measures given by \emph{tilts} of a given reference measure.
We focus on the case of Gaussian reference measure, as it is not obvious how to extend the argument to a general reference measure. For $y \in \R^n$, let $\gamma_{y,t}$ denote the Gaussian with mean $y$ and covariance matrix $tI$, with $\gamma_t:=\gamma_{0,t}$, noting that $\gamma_{y,t} \in \Pprod(\R^n)$.

\begin{proposition} \label{pr:tilt-Gaussian}
	Let $t > 0$, and let $f : \R^n \to \R$ be $C^2$ and concave. Assume there exist $c_1 \ge 0$ and $0 \le c_2 < 1/2t$ such that $|f(x)| \le c_1e^{c_2|x|^2}$. Then there is a unique $y^* \in \R^n$ satisfying
	\begin{align}
		y^* = t \int_{\R^n} \nabla f \, d\gamma_{y^*,t}, \label{Gaussian-tilt-eq}
	\end{align}
	and it holds that
	\begin{align}
		\log\int_{\R^n} e^f\,d\gamma_t \le \sup_{y \in \R^n}\left( \int_{\R^n} f\,d\gamma_{y,t} - H(\gamma_{y,t}\,|\,\gamma_t)\right) + \frac{t^2}{2}\sum_{i,j=1}^n \int_{\R^n} |\partial_{ij}f|^2\,d\gamma_{y^*,t}. \label{Gaussian-tilt-bound}
	\end{align}
\end{proposition}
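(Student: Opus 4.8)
The plan is to mimic the structure of the proof of Theorem \ref{th:main-intro}, but now restricting the class of product measures to the Gaussian tilts $\{\gamma_{y,t} : y \in \R^n\}$, which reduces the mean field optimization to a finite-dimensional problem over $y \in \R^n$. First I would analyze the map $y \mapsto \Phi(y) := \int f\,d\gamma_{y,t} - H(\gamma_{y,t}\,|\,\gamma_t)$. Since $H(\gamma_{y,t}\,|\,\gamma_t) = |y|^2/2t$, we have $\Phi(y) = \int f\,d\gamma_{y,t} - |y|^2/2t$. The growth assumption $|f(x)| \le c_1 e^{c_2|x|^2}$ with $c_2 < 1/2t$ ensures $\int f\,d\gamma_{y,t}$ is finite and smooth in $y$ (differentiation under the integral sign is justified by the Gaussian tails dominating $e^{c_2|x|^2}$), and that $\Phi(y) \to -\infty$ as $|y| \to \infty$, so a maximizer exists. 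Concavity of $f$ implies concavity of $y \mapsto \int f\,d\gamma_{y,t} = \int f(y+\sqrt t\, z)\,\gamma_1(dz)$, hence $\Phi$ is strictly concave (the $-|y|^2/2t$ term), giving a unique maximizer $y^*$. The first-order condition $\nabla\Phi(y^*) = 0$ reads $\int \nabla f\,d\gamma_{y^*,t} - y^*/t = 0$, which is exactly \eqref{Gaussian-tilt-eq}; this establishes existence and uniqueness of $y^*$ and identifies it with the optimal tilt.

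Next, for the bound \eqref{Gaussian-tilt-bound}, I would use the analogue of the entropic projection identity \eqref{entproj-id0}: for $Q = \gamma_{y^*,t}$,
\[
\log\int_{\R^n} e^f\,d\gamma_t - \Phi(y^*) = \log\int_{\R^n} e^f\,d\gamma_t - \Big(\int f\,d\gamma_{y^*,t} - H(\gamma_{y^*,t}\,|\,\gamma_t)\Big) = H(\gamma_{y^*,t}\,|\,P),
\]
where $P(dx) = Z^{-1}e^{f(x)}\gamma_t(dx) \propto e^{f(x) - |x|^2/2t}\,dx$. Now $P$ is $(1/t)$-log-concave since $f$ is concave, so by Bakry--Émery it satisfies the log-Sobolev inequality with constant $1/t$, giving
\[
H(\gamma_{y^*,t}\,|\,P) \le \frac{t}{2}\int_{\R^n}\Big|\nabla\log\frac{d\gamma_{y^*,t}}{dP}\Big|^2\,d\gamma_{y^*,t}.
\]
Since $\log\frac{d\gamma_{y^*,t}}{dP}(x) = \text{const} - \frac{|x-y^*|^2}{2t} - f(x) + \frac{|x|^2}{2t} = \text{const} + \frac{\langle y^*, x\rangle}{t} - \frac{|y^*|^2}{2t} - f(x)$, we get $\nabla\log\frac{d\gamma_{y^*,t}}{dP}(x) = \frac{y^*}{t} - \nabla f(x)$. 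Using the fixed point equation \eqref{Gaussian-tilt-eq}, $\frac{y^*}{t} = \int\nabla f\,d\gamma_{y^*,t} = \E_{\gamma_{y^*,t}}[\nabla f(X)]$, so the gradient equals $\E_{\gamma_{y^*,t}}[\nabla f(X)] - \nabla f(X)$, a centered quantity under $\gamma_{y^*,t}$. Therefore
\[
\int_{\R^n}\Big|\nabla\log\frac{d\gamma_{y^*,t}}{dP}\Big|^2\,d\gamma_{y^*,t} = \sum_{i=1}^n \Var_{\gamma_{y^*,t}}(\partial_i f(X)) \le t\sum_{i,j=1}^n\int_{\R^n}|\partial_{ij}f|^2\,d\gamma_{y^*,t},
\]
where the last step applies the Poincaré inequality for the Gaussian $\gamma_{y^*,t}$ (which has Poincaré constant $t$) to each coordinate function $\partial_i f$, since $\nabla(\partial_i f) = (\partial_{ij}f)_j$. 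Combining the three displays gives $H(\gamma_{y^*,t}\,|\,P) \le \frac{t^2}{2}\sum_{i,j}\int|\partial_{ij}f|^2\,d\gamma_{y^*,t}$, which rearranges to \eqref{Gaussian-tilt-bound}.

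The main obstacle I anticipate is the technical justification of differentiating under the integral sign and of the formal log-Sobolev/Poincaré applications when $\nabla^2 f$ is unbounded: one must check that $\partial_i f$ and $\partial_{ij}f$ are in the relevant $L^2(\gamma_{y^*,t})$ spaces. This follows because concavity of $f$ together with the $e^{c_2|x|^2}$ bound forces at-most-linear-type growth control on $\nabla f$ along most directions — more carefully, $|\nabla f(x)|$ can be controlled by the growth of $f$ via concavity (a concave function's gradient at $x$ is bounded by difference quotients, which inherit the $e^{c_2|x|^2}$ bound), and these are square-integrable against the Gaussian $\gamma_{y^*,t}$; for the second derivatives one integrates by parts, transferring a derivative onto the Gaussian density, as in the integrated form of the Poincaré inequality. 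These are routine but slightly delicate moment estimates entirely analogous to those needed in Theorem \ref{th:main-intro}, and I would handle them by the same approximation arguments used there (e.g.\ truncating $f$ or invoking the $\kappa$-log-concavity of the relevant measures to get uniform moment bounds).
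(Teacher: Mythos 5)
Your proposal is correct and follows essentially the same route as the paper's proof: identify $y^*$ as the unique maximizer of the strictly concave tilt functional via its first-order condition, convert the gap to $H(\gamma_{y^*,t}\,|\,P)$, apply the log-Sobolev inequality for the $(1/t)$-log-concave measure $P$, use \eqref{Gaussian-tilt-eq} to recognize the resulting integrand as $\sum_i \Var_{\gamma_{y^*,t}}(\partial_i f)$, and finish with the Gaussian Poincar\'e inequality. The extra attention you give to differentiation under the integral sign and to the integrability of $\nabla f$ and $\nabla^2 f$ is a reasonable supplement to details the paper leaves implicit.
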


Noting that $H(\gamma_{y,t}\,|\,\gamma_t)=|y|^2/2t$, a simple calculation shows that $y^*$ uniquely attains the supremum in \eqref{Gaussian-tilt-bound}.
The difference between \eqref{Gaussian-tilt-bound} and \eqref{MFapprox-refmeas} is that the former includes the diagonal terms $i=j$ in the sum. This is natural; an additively separable function $f(x)=\sum_{i=1}^n f_i(x_i)$ yields a product measure $P(dx)=Z^{-1}e^{f(x)}\gamma_t(dx)$, but it takes an \emph{affine} function $f$ for $P$ to be a Gaussian. Small off-diagonal derivatives $\partial_{ij}f$ can be naturally interpreted as meaning $f$ is close to being additively separable, but the full Hessian matrix $\nabla^2f$ must to be small in order for $f$ to be close to affine.

The above proposition is worth comparing with prior results based on gradient complexity. It was shown in \cite[Proposition 3.4, arXiv version]{augeri2} that if $f : \R^n \to \R$ is $C^1$ then
\begin{align}
	\log\int_{\R^n} e^f\,d\gamma_t &\le  \sup_{y \in \R^n}\left( \int_{\R^n} f\,d\gamma_{y,t} - H(\gamma_{y,t}\,|\,\gamma_t)\right) + \sqrt{2}\int_{\R^n} \sup_{y \in \R^n} \big(x \cdot \nabla f(y)\big) \, \gamma_t(dx). \label{Gaussian-tilt-bound-GW}
\end{align}
The last integral is ($\sqrt{t}$ times) the \emph{Gaussian mean-width} of the set  $\nabla f(\R^n)$. This estimate \eqref{Gaussian-tilt-bound-GW} has the advantage of applying to non-concave functions $f$, but it is only meaningful if $\nabla f$ is bounded. Proposition \ref{pr:tilt-Gaussian}, on the other hand, can accommodate non-Lipschitz but concave functions $f$.

\subsubsection{Generalization of the main theorem}\label{sec: generalization}

We briefly discuss how Theorem \ref{th:main-intro} can generalize beyond the strongly log-concave setting.
Essentially, strong log-concavity is needed only for the uniqueness claims and to justify the log-Sobolev and Poincar\'e inequalities as explained in Section \ref{se:proofideas}.
\emph{Uniqueness} of $Q^*$ is actually not essential if one is interested only in a bound like \eqref{MFapprox-mainthm}.
The \emph{existence} of an optimizer $Q^*$ is automatic, and it is not hard to show that it must satisfy the fixed point equation \eqref{fixedpoint-mainthm}, modulo technical conditions.
If it can be shown that $Q^*$ admits a strictly positive $C^2$ density, and that $P$ and $Q^*$ obey a log-Sobolev and Poincar\'e inequality, respectively, with constants $C_1$ and $C_2$, then the following bound can be proven as in Section \ref{se:proofideas}:
\begin{equation*}
0 \le R_f \le C_1 \E_{Q^*}\sum_{i=1}^n  \Var_{Q^*}(\partial_i f(X)\,|\,X_i)  \le 2C_1C_2\sum_{1\le i<j\le n} \E_{Q^*}[|\partial_{ij}f(X)|^2]. 
\end{equation*}
It is unclear if our assumed bound on $|f(x)|$ is needed or merely an artifact of our proof technique. We use the assumed bound on $|f(x)|$ in the proof of Theorem 1.1 only to show that $Q^*$ is strictly positive a.e., but this can be shown directly in many particular cases, such as when $f$ is symmetric.

\subsection{Outline of the paper}

In Section \ref{se:applications}, we will present in detail the three main applications of Theorem \ref{th:main-intro}, which pertain to Gibbs measures, high-dimensional Bayesian linear regression, and high-dimensional stochastic optimal control. The proof of Theorem \ref{th:main-intro}  is given in Section \ref{subsec:proofs main and log-concave}, followed by the proof of Corollary \ref{co:refmeas} in Section \ref{subsec: proofs-refprob}. Section \ref{subsec: proofs-tilts} contains the proof of Proposition \ref{pr:tilt-Gaussian}, while Section \ref{subsec: proofs-asymp-indep} contains the proofs of the subadditivity inequality  \eqref{W2subadditivity} and Corollary \ref{co:linearstat-concentration}. Finally, the proofs of the applications are given in Sections \ref{sec: proofs-gibbs} and \ref{se:control-proofs}.

\section{Applications} \label{se:applications}

\subsection{Gibbs measures with pairwise interactions} \label{se:Gibbs}

First, we study Gibbs measures  with pairwise interaction potentials  of the form \eqref{def:Gibbs-potential},
	where the following assumption holds:
	\begin{assumption} \label{ass:Gibbs}
		$V : \R \to \R$ is $C^2$ and $\kappa$-concave for some $ \kappa > 0$, $K : \R \to \R$ is even, $C^2$, and concave, and $J$ is a symmetric matrix with nonnegative entries and $J_{ii}=0$ for all $i=1,\ldots,n$. Assume there exists $ a,b,c \ge 0 $ and $0 \le d  < \kappa/2$  such that $ |V(x)| \le  c e^{d x^2} $ and $|K''(x)|^2 \le ae^{b|x|}$  hold for all $ x \in \R $.
	\end{assumption}
	Note since $K$ is even that there is no loss of generality in assuming that $J$ is zero on the diagonal.
	The most traditional \emph{mean field} setting is when $J_{ij}=1/n$ for all $(i,j)$, so that all particles interact equally, and there is a vast literature on the large-$n$ behavior; see \cite{chafai2014first,dupuis2020large} for some recent results and references.
	In general, the matrix $J$ represents \emph{disorder} or \emph{heterogeneous interactions}, and a common situation is when $J$ is the rescaled adjacency matrix of a graph. A notable strength of the non-asymptotic perspective of our work, and the theory of nonlinear large deviations more broadly, is that it can seamlessly handle this kind of heterogeneity.
	Gibbs measures with pairwise interactions on large graphs have been studied in many contexts, primarily on finite state space (see \cite{basak-mukherjee,biskup2003rigorous,dembo2010ising,dembo2014replica} and references therein). In the continuous context we study here, these Gibbs measures appear as invariant measures of locally interacting diffusion processes whose large-scale behavior has recently been the subject of active research  \cite{delattre2016note,oliveira2019interacting}.

To work toward applying Theorem \ref{th:main-intro} with $f$ as in \eqref{def:Gibbs-potential}, we first record the simple observation that $f$ is strongly concave under Assumption \ref{ass:Gibbs}. The proof of this and other results in Section \ref{se:Gibbs} are given in Section \ref{sec: proofs-gibbs}.

\begin{lemma} \label{lem: gibbs-log-concave}
	Define $f$ by \eqref{def:Gibbs-potential}, and suppose Assumption \ref{ass:Gibbs} holds. Then $f$ is $\kappa$-concave. 
\end{lemma}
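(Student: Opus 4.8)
The plan is to verify the Hessian criterion directly: since $V$ and $K$ are $C^2$, so is $f$, and $\kappa$-concavity is equivalent to $\nabla^2 f(x) \preceq -\kappa I$ (semidefinite order) for every $x \in \R^n$. I would split $f = f_V + f_K$ with $f_V(x) = \sum_{i=1}^n V(x_i)$ and $f_K(x) = \sum_{1\le i<j\le n} J_{ij} K(x_i - x_j)$ and bound the Hessian of each piece separately.

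For $f_V$, the Hessian is the diagonal matrix $\mathrm{diag}(V''(x_1),\ldots,V''(x_n))$, and $\kappa$-concavity of $V$ gives $V'' \le -\kappa$ pointwise, so $\nabla^2 f_V(x) \preceq -\kappa I$. For $f_K$, the key observation is that for each pair $i<j$ the map $x \mapsto K(x_i - x_j)$ has Hessian $K''(x_i - x_j)\,(e_i - e_j)(e_i - e_j)^\top$, a rank-one matrix. Because $J_{ij} \ge 0$ and $K$ is concave (so $K'' \le 0$), each summand $J_{ij} K''(x_i-x_j)(e_i-e_j)(e_i-e_j)^\top$ is negative semidefinite; equivalently, for any $v \in \R^n$ one has $v^\top \nabla^2 f_K(x)\, v = \sum_{i<j} J_{ij} K''(x_i-x_j)(v_i - v_j)^2 \le 0$. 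Summing over pairs, $\nabla^2 f_K(x) \preceq 0$.

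Adding the two bounds yields $\nabla^2 f(x) = \nabla^2 f_V(x) + \nabla^2 f_K(x) \preceq -\kappa I$, which is the claim. I do not expect any real obstacle: the exponential-growth hypotheses on $|V|$ and $|K''|$ in Assumption~\ref{ass:Gibbs} are irrelevant for this lemma (they enter only when invoking Theorem~\ref{th:main-intro}), and the sole point requiring care is the sign bookkeeping — recognizing that the off-diagonal entries $\partial_{ij}f = -J_{ij}K''(x_i-x_j)$ together with the induced diagonal terms $\sum_{j\ne i} J_{ij}K''(x_i-x_j)$ assemble precisely into a weighted graph Laplacian with nonpositive weights, hence a negative semidefinite matrix.
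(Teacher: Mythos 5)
Your proof is correct and follows essentially the same route as the paper: the paper computes $\partial_{ii}f$ and $\partial_{ij}f$ explicitly and symmetrizes to obtain the same quadratic form $\tfrac12\sum_{i,j}(z_i-z_j)^2 J_{ij}K''(x_i-x_j)\le 0$, which is exactly your weighted-Laplacian/rank-one decomposition written over unordered pairs. Your observation that the growth hypotheses in Assumption~\ref{ass:Gibbs} play no role here is also consistent with the paper.
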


	The following corollary will allow us to cover the case of unbounded $K''$, but only if we can control the barycenter of $Q^*$ in the sense that $\E_{Q^*}[X_i-X_j]=0$. This symmetry condition is justified in different ways in the following applications and is explained further in Section \ref{se:symmetry}. 
	
	\begin{corollary}\label{cor:quadratic}
		Define $f$ by \eqref{def:Gibbs-potential}, and suppose Assumption \ref{ass:Gibbs} holds.
		With $Q^*$ denoting the unique optimizer of \eqref{MFopt-mainthm}, assume further that $\E_{Q^*}[X_i-X_j]=0$. 
		Then 
		\begin{align*}
			R_f &\le \tr(J^2) a \kappa^{-2} e^{b^2/\kappa} .
		\end{align*}
	\end{corollary}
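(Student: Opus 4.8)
The plan is to combine the general estimate of Theorem~\ref{th:main-intro} with sub-Gaussian moment bounds for the one-dimensional marginals of $Q^*$, which are available precisely because $Q^*$ is strongly log-concave.

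First I would verify that Theorem~\ref{th:main-intro} applies to $f$ as in \eqref{def:Gibbs-potential}. Strong concavity is exactly Lemma~\ref{lem: gibbs-log-concave}. The growth condition $|f(x)| \le c_1 e^{c_2|x|^2}$ with $0 \le c_2 < \kappa/2$ follows routinely from Assumption~\ref{ass:Gibbs}: the bound $|K''|^2 \le a e^{b|\cdot|}$ forces $|K'|$, hence $|K|$, to grow at most exponentially in $|x|$ (linearly/quadratically if $b=0$), which is in turn dominated by $c'\,e^{\varepsilon|x|^2}$ for any fixed $\varepsilon>0$; meanwhile $|V|$ already has the required form with exponent $d<\kappa/2$, and summing the $O(n^2)$ terms preserves an exponent that may be taken in $[d,\kappa/2)$. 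Theorem~\ref{th:main-intro} then yields, via \eqref{MFapprox-mainthm}, that $R_f \le \kappa^{-2}\sum_{i<j}\E_{Q^*}[|\partial_{ij}f(X)|^2]$; since $\partial_{ij}f(x)=-J_{ij}K''(x_i-x_j)$ for $i\ne j$ and $|K''(x)|^2\le a e^{b|x|}$, this becomes $R_f \le (a/\kappa^2)\sum_{i<j}J_{ij}^2\,\E_{Q^*}[e^{b|X_i-X_j|}]$.

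The heart of the matter is to show $\E_{Q^*}[e^{b|X_i-X_j|}] \le 2 e^{b^2/\kappa}$ for every pair $i\ne j$. By Theorem~\ref{th:main-intro}(2), $Q^*$ is $\kappa$-log-concave, and since it is a product measure each marginal $Q^*_i$ is $\kappa$-log-concave; hence, by the Bakry--\'Emery criterion, $Q^*_i$ satisfies a log-Sobolev inequality with constant $\kappa$, and the Herbst argument gives the sub-Gaussian bound $\E_{Q^*}[e^{\lambda(X_i-\E_{Q^*}X_i)}] \le e^{\lambda^2/2\kappa}$ for all $\lambda\in\R$. I would then bound $e^{b|X_i-X_j|}\le e^{b(X_i-X_j)}+e^{-b(X_i-X_j)}$, factor each expectation using independence of $X_i$ and $X_j$ under the product measure $Q^*$, extract the means, and invoke the hypothesis $\E_{Q^*}[X_i-X_j]=0$ to see that the mean contributions cancel, leaving each of the two pieces bounded by $e^{b^2/2\kappa}\cdot e^{b^2/2\kappa}=e^{b^2/\kappa}$. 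Substituting back and using $\tr(J^2)=\sum_{i,j}J_{ij}^2=2\sum_{i<j}J_{ij}^2$ (valid since $J$ is symmetric with zero diagonal) produces exactly $R_f\le\tr(J^2)\,a\,\kappa^{-2}e^{b^2/\kappa}$.

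I do not expect a serious obstacle beyond this bookkeeping; the one genuinely load-bearing point is the symmetry hypothesis $\E_{Q^*}[X_i-X_j]=0$. Because $K''$ is allowed to be unbounded, we really need to control the exponential moment $\E_{Q^*}[e^{b|X_i-X_j|}]$ rather than a sup-norm of $K''$, and the naive estimate from sub-Gaussianity of each $X_i$ carries an extra factor $e^{b|\E_{Q^*}X_i-\E_{Q^*}X_j|}$; since the individual means $\E_{Q^*}X_i$ are not controlled for general unbounded $V$ and $K$, the symmetry assumption is precisely what kills this term. (Alternatively one could note that $X_i-X_j$ is $(\kappa/2)$-log-concave, being a convolution of two independent $\kappa$-log-concave laws, and apply the sub-Gaussian estimate to it directly — but one would still need its mean to vanish.)
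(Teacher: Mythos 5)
Your proposal is correct and follows essentially the same route as the paper: verify the hypotheses of Theorem \ref{th:main-intro}, reduce $R_f$ to $\kappa^{-2}\sum_{i<j}J_{ij}^2\,\E_{Q^*}[|K''(X_i-X_j)|^2]$, and bound the exponential moment $\E_{Q^*}[e^{b|X_i-X_j|}]$ by $2e^{b^2/\kappa}$ using sub-Gaussianity of the $\kappa$-log-concave $Q^*$ together with the centering hypothesis $\E_{Q^*}[X_i-X_j]=0$. The only (immaterial) difference is that the paper applies the Herbst-type bound directly to the $(\kappa/2)$-log-concave law of $X_i-X_j$ — the alternative you mention in your closing parenthetical — whereas you tensorize over the two independent marginals; both yield the same constant.
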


\begin{remark}\label{ref:compare}
	Corollary \ref{cor:quadratic} shows that $R_f=o(n)$ as long as $\mathrm{Tr}(J^2)=o(n)$.
	The assumption $\mathrm{Tr}(J^2)=o(n)$ has been used in the literature as a \emph{mean field condition} for quadratic interaction models, first in \cite[Theorem  1.1]{basak-mukherjee} and then in \cite[Theorem 4]{yan2020nonlinear}. Both cases are limited to measures with compact support. Moreover, in their setting, neither uniqueness of the optimizer nor convergence of the empirical measure hold in general. In contrast, in our setting we can allow measures of unbounded support, and we show both uniqueness of the optimizer and the convergence of the empirical measure in Theorems \ref{thm: regular graphs} and \ref{thm:graphons} below. On the other hand, our results require concavity assumptions which were not needed in \cite{basak-mukherjee,yan2020nonlinear}. 
\end{remark}

Using Corollary \ref{cor:quadratic}, one can study the weak law of large numbers of the empirical measure under $P$, by studying the corresponding weak law under the product measure $Q^*$. Under additional assumptions on the matrix $J$, the mean field optimization problem can be shown to converge as $n\to\infty$, allowing us to characterize the weak law under $P$ in terms of the limiting optimization problem. Below we illustrate this in two special cases.

\subsubsection{Doubly stochastic matrices}

In the following $n\to\infty$ results, note that the dependence of $f$, $P(dx)=Z^{-1}e^{f(x)}dx$ and $ J $ on $n$ is suppressed.

\begin{theorem} \label{thm: regular graphs}
	 Define $f$ by \eqref{def:Gibbs-potential}, and suppose Assumption \ref{ass:Gibbs} holds.
		Assume there exist $a,b \ge 0$ such that $|K''(x)|^2 \le ae^{b|x|}$ for all $x$.
	Assume further that the symmetric matrix $J$ is doubly stochastic (i.e., $\sum_{j=1}^nJ_{ij}=1$ for all $i$), and obeys the mean field condition $\tr(J^2)=o(n)$. Then we have the following conclusions: 
	\begin{enumerate}[(1)]
		\item \label{regular graphs conv}
		\begin{align}
			\lim_{n\to\infty}\frac{1}{n}\log \int_{\R^n} e^{f(x)}dx=\sup_{Q \in \P(\R)} \left( \int_{\R} V\,dQ + \frac12\int_{\R}\int_{\R} K(x-y)Q(dx)Q(dy) - H(Q)\right).\label{eq: regular graphs conv}
		\end{align}
		\item \label{regular graphs weak law}The supremum in \eqref{eq: regular graphs conv} is attained by a unique $Q \in \P(\R)$, and if $(X_1,\ldots,X_n) \sim P$ then
		\begin{align}
			\frac{1}{n}\sum_{i=1}^n\delta_{X_i} \to Q, \text{ weakly in law}. \label{eq: regular graphs weak law}
		\end{align} 
	\end{enumerate}
\end{theorem}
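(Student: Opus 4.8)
The plan is to combine Theorem \ref{th:main-intro} and Corollary \ref{cor:quadratic}, which reduce $\log\int e^f\,dx$ to the mean field optimization up to an $o(n)$ error, with an analysis showing this optimization converges to the single-site problem on the right of \eqref{eq: regular graphs conv}. Write $F_\infty$ for that right-hand side, $Z_n=\int_{\R^n}e^f\,dx$, and $V_n=\sup_{Q\in\Pprod(\R^n)}(\int f\,dQ-H(Q))$, so that $\tfrac1n\log Z_n=\tfrac1n V_n+\tfrac1n R_f$ by the definition of $R_f$; by Lemma \ref{lem: gibbs-log-concave} $f$ is $\kappa$-concave, so Theorem \ref{th:main-intro} applies. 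Restricting the supremum in $V_n$ to i.i.d.\ products $q^{\otimes n}$, $q\in\P(\R)$, and using that $J$ is doubly stochastic with zero diagonal (so $\sum_{i<j}J_{ij}=\tfrac12\sum_{i\neq j}J_{ij}=\tfrac n2$), gives $\int f\,dq^{\otimes n}-H(q^{\otimes n})=n(\int V\,dq+\tfrac12\iint K(x-y)\,q(dx)q(dy)-H(q))$, hence $\tfrac1n V_n\ge F_\infty$ and $\tfrac1n\log Z_n\ge F_\infty$ for every $n$. (Finiteness and attainment of $F_\infty$ is a routine weak-compactness/lower-semicontinuity argument, coercivity coming from the $\kappa$-concavity of $V$.)

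For the error, $\partial_{ij}f(x)=-J_{ij}K''(x_i-x_j)$ when $i\neq j$, so Corollary \ref{cor:quadratic} gives $R_f\le a\kappa^{-2}e^{b^2/\kappa}\,\tr(J^2)=o(n)$ once the symmetry $\E_{Q^*}[X_i-X_j]=0$ is verified; for doubly stochastic $J$ this is supplied by Section \ref{se:symmetry}, and it is unnecessary when $K''$ is bounded, in which case Theorem \ref{th:main-intro} gives $R_f\le\tfrac{1}{2\kappa^2}\|K''\|_\infty^2\,\tr(J^2)$ directly. Thus $\tfrac1n\log Z_n=\tfrac1n V_n+o(1)$, and conclusion (1) reduces to the matching bound $\limsup_n\tfrac1n V_n\le F_\infty$.

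To obtain this, let $(Q^*_i)$ attain $V_n$ — it is $\kappa$-log-concave by Theorem \ref{th:main-intro} — and set $\bar Q=\tfrac1n\sum_i Q^*_i$, $\nu_i=Q^*_i-\bar Q$ (so $\sum_i\nu_i=0$). Convexity of $H$ and linearity of $Q\mapsto\int V\,dQ$ give $\sum_i(\int V\,dQ^*_i-H(Q^*_i))=n(\int V\,d\bar Q-H(\bar Q))-n\,\mathrm{Gap}$ with $\mathrm{Gap}:=\tfrac1n\sum_i H(Q^*_i\,|\,\bar Q)\ge 0$, while the unit row/column sums of $J$ make the interaction term exactly $\tfrac n2\iint K\,\bar Q\otimes\bar Q+\tfrac12\sum_{i,j}J_{ij}\iint K\,\nu_i\otimes\nu_j$. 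Since $n(\int V\,d\bar Q-H(\bar Q)+\tfrac12\iint K\,\bar Q\otimes\bar Q)\le nF_\infty$, comparison with $V_n\ge nF_\infty$ yields both $n\,\mathrm{Gap}\le\tfrac12\sum_{i,j}J_{ij}\iint K\,\nu_i\otimes\nu_j$ and $\tfrac1n V_n\le F_\infty+\tfrac1{2n}\sum_{i,j}J_{ij}\iint K\,\nu_i\otimes\nu_j$, so the claim reduces to $\sum_{i,j}J_{ij}\iint K\,\nu_i\otimes\nu_j=o(n)$. The route is a self-consistent estimate: bound this quantity in terms of $\sum_i\|\nu_i\|^2$ times the spectral/trace size of $J-\tfrac1n\mathbf{1}\mathbf{1}^\top$ (for which $\tr(J^2)-1=o(n)$), using uniform exponential moments of the $Q^*_i$ from their $\kappa$-log-concavity — the symmetry condition keeping $\E_{Q^*}[e^{b|X_i-X_j|}]$ bounded uniformly in $i,j,n$ — and then feed in the Pinsker-type bound $\sum_i\|\nu_i\|^2\lesssim n\,\mathrm{Gap}$ together with the first inequality above, in the spirit of \cite{basak-mukherjee, yan2020nonlinear}. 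This gives $\limsup_n\tfrac1n V_n\le F_\infty$, proving conclusion (1).

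For conclusion (2), uniqueness of the maximizer $Q_\infty$ of $F_\infty$ follows as in Lemma \ref{lem: maximizer unique} from strict displacement convexity among $\kappa$-log-concave measures (or from strict concavity of $-H$). By Corollary \ref{co:linearstat-concentration}, $\E_P[(\tfrac1n\sum_i\varphi(X_i)-\tfrac1n\sum_i\E_{Q^*}[\varphi(X_i)])^2]\le\tfrac{(1+\sqrt{2R_f})^2}{\kappa n}\to 0$ for every $1$-Lipschitz $\varphi$, so $\tfrac1n\sum_i\delta_{X_i}$ is close in probability to $\bar Q_n:=\tfrac1n\sum_i Q^*_i$; the comparison above shows $\bar Q_n$ is an $o(1)$-near-maximizer of $F_\infty$'s functional, so uniqueness of $Q_\infty$ plus tightness of near-optimal sets gives $\bar Q_n\Rightarrow Q_\infty$, whence $\tfrac1n\sum_i\delta_{X_i}\Rightarrow Q_\infty$ in law. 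The main obstacle is closing the self-consistent estimate above — showing $\sum_{i,j}J_{ij}\iint K\,\nu_i\otimes\nu_j=o(n)$ under only $\tr(J^2)=o(n)$ with $K$ possibly unbounded — which requires using simultaneously the optimizer's self-consistency, the spectral structure of the doubly stochastic matrix, and the log-concavity-based moment control; verifying the symmetry condition $\E_{Q^*}[X_i-X_j]=0$ for doubly stochastic $J$ is the other substantive point.
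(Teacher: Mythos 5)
Your reduction $\tfrac1n\log Z_n=\tfrac1n V_n+\tfrac1n R_f$, the lower bound by restriction to i.i.d.\ products, and the algebraic decomposition around $\bar Q=\tfrac1n\sum_i Q^*_i$ are all fine, but the proof has two genuine gaps, and the second is the heart of the theorem. First, the symmetry condition $\E_{Q^*}[X_i-X_j]=0$ needed for Corollary \ref{cor:quadratic} is not actually verified: Lemma \ref{le:evenness} requires either that $f$ be even (which would need $V$ even, not assumed here) or that $f$ be invariant under a \emph{transitive} set of permutations, and double stochasticity of $J$ implies neither --- a symmetric doubly stochastic matrix with zero diagonal can easily have trivial automorphism group. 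Your fallback to bounded $K''$ does not cover the stated hypotheses. Second, the matching upper bound $\limsup_n\tfrac1n V_n\le F_\infty$ is exactly the step you leave open (``the main obstacle is closing the self-consistent estimate''), and the route you sketch is unlikely to close as stated: Pinsker controls $\sum_i\|\nu_i\|_{\mathrm{TV}}^2$ by $n\,\mathrm{Gap}$, but pairing an \emph{unbounded} kernel $K$ against signed measures controlled only in total variation gives nothing, and the spectral bound on $J-\tfrac1n\mathbf{1}\mathbf{1}^\top$ requires an $L^2$-type structure that $\iint K\,\nu_i\otimes\nu_j$ does not obviously have. This is precisely the Basak--Mukherjee/Yan strategy that the paper points out (Remark \ref{ref:compare}) is confined to compactly supported (hence bounded-$K$) settings.

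The paper closes both gaps at once with a single convexity argument you did not find: it proves the \emph{exact} identity $M_n=nF_\infty$ for every $n$, with no error term. Given any product measure $Q_1\times\cdots\times Q_n$, Lemma \ref{lem: entropy.convexity} (an iterated displacement-convexity statement) produces a coupling $X=(X_1,\dots,X_n)$ with $X_i\sim Q_i$ such that the law $\overline{Q}$ of $\tfrac1n\sum_i X_i$ satisfies $H(\overline{Q})\le\tfrac1n\sum_i H(Q_i)$; concavity of $V$ and of $K$ together with $\sum_iJ_{ij}=\sum_jJ_{ij}=1$ then give $M_n(Q)\le M_n(\overline{Q}^{\otimes n})$, so the supremum over products is attained on i.i.d.\ measures. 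Combined with the uniqueness of the optimizer from Theorem \ref{th:main-intro}, this forces $Q^*=Q^{\otimes n}$, which simultaneously (i) yields $\E_{Q^*}[X_i-X_j]=0$ so that Corollary \ref{cor:quadratic} applies and $R_f=o(n)$, and (ii) makes part (2) immediate from Corollary \ref{co:linearstat-concentration}, since $\tfrac1n\sum_i\E_{Q^*}[\varphi(X_i)]=\int\varphi\,dQ$ with $Q$ the $n$-independent optimizer of the limit problem. To repair your proof you would need to either supply this coupling argument or genuinely close the self-consistent estimate for unbounded $K$, neither of which is done.
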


The above theorem applies when  $J=A/d$ and $A$ is the adjacency matrix of a $d$-regular graph. In this case we get $\tr(J^2)=n/d$, which is $o(n)$ as long as $d \to \infty$. The above theorem is similar in spirit to \cite[Theorem 2.1]{basak-mukherjee}, which dealt with Ising and Potts models, and a comment similar to Remark \ref{ref:compare} applies. Note that one cannot expect a mean field approximation to be valid in the sparsest (diluted) case, where $d$ stays bounded as $n\to\infty$. The framework of local weak convergence has proven to be successful in this context \cite{dembo2010gibbs}, and we refer also to \cite[Sections 2 and B]{lacker2019local} for continuous models encompassing the form studied here, and for a detailed derivation of the (folklore) limit of the empirical measure for locally convergent graph sequences, which requires uniqueness of the infinite-volume Gibbs measure on the limiting graph.

\subsubsection{Graphons}

Another case in which we can derive asymptotics of the log partition function is when the matrix $J$ converges to a graphon $W$ in cut metric. Below we introduce the relevant notions, deferring to \cite{borgs-Lp-1,borgs-Lp-2,borgs2008convergent,lovasz2012large} for additional background:

\begin{definition}
	Let $\mathcal{W}$ denote the space of all symmetric measurable functions from $[0,1]^2$ to $[0, \infty)$ which are integrable. For $W_1,W_2\in \mathcal{W}$, define the  strong cut (pseudo-)metric by 
	\[
	d_\square(W_1,W_2):=\sup_{S,T\subset [0,1]}\bigg|\int_{S\times T} \big(W_1(u,v)-W_2(u,v) \big)du dv\bigg|,
	\]
	and their weak cut (pseudo-)metric by
		\[
	\delta_\square(W_1,W_2):=\inf_{\varphi}d_\square(W_1,W_2^\varphi),
	\]
	where the infimum is over all invertible measure-preserving maps $\varphi : [0,1] \to [0,1]$, and $W_2^\varphi(u,v) := W_2(\varphi(u),\varphi(v))$.
	Given a symmetric matrix $A\in \R^{n\times n}$ with nonnegative entries, we define a function $W_{A} \in \mathcal{W}$ by setting  $W_A(u,v) \coloneqq A_{\lceil nu \rceil, \lceil nv \rceil}$.
	We say that a sequence of symmetric matrices $\{A_n\}$ \emph{converges in weak cut metric} to a function $W \in \mathcal{W}$ if $\delta_\square(W_{A_n},W)\to 0$.
\end{definition}

\begin{remark}
	Suppose $G_n$ is the adjacency matrix of an Erd\H{o}s-R\'enyi random graph on $n$ vertices with parameter $p_n$, such that $np_n\to \infty$. If $J_n=\frac{1}{np_n}G_n$, then $nJ_n$ converges in strong cut metric to the constant function $1$ (see \cite[Example 3.3.1]{borgs-Lp-2}). Similar convergences hold if $G_n$ arises from a stochastic block model, where the edge probability matrix has a block structure, in which case the limiting $W$ retains the same block structure. For more examples of convergent sequence of graphs in cut metric, we refer again to \cite{borgs-Lp-1,borgs-Lp-2,borgs2008convergent,lovasz2012large} and references therein.
\end{remark}

Let $\Punif$ denote the space of all probability measures on $[0,1]\times \R$ with uniform first marginal. 
Note that any $\mu \in \Punif$ admits the disintegration $\mu(du,dx)=du\mu_u(dx)$.

\begin{theorem}\label{thm:graphons}
	Define $f$ by \eqref{def:Gibbs-potential}, and suppose Assumption \ref{ass:Gibbs} holds.
		Assume there exist $a,b \ge 0$ such that $|K''(x)|^2 \le ae^{b|x|}$ for all $x$.
		Assume further that $V$ is even, $K$ is nonpositive, $\int_{\R} e^{V(x)}dx=1$, and $J=\{J_n\}$ is a sequence of matrices such that $\{nJ_n\}$ converges in weak cut metric to a function $W\in \mathcal{W}$. Assume also that $\tr(J_n^2) = o(n)$.

	\begin{enumerate}[(1)]
		\item \label{graphons conv} Defining the probability measure $\rho(dx)=e^{V(x)}dx$, we have
		\begin{align}
			\begin{split}
				&\lim_{n\to\infty}\frac{1}{n}\log \int_{\R^n} e^{f(x)}\,dx \\
				& \ \ = \sup_{\mu \in \Punif} \bigg( \frac12 \int_{([0,1] \times \R)^2}  \!\! W(u,v)K(x-y)\mu(du,dx)\mu(dv,dy) - \int_0^1 \!\!H(\mu_u\,|\,\rho)\,du\bigg). \label{eq: graphons conv}
			\end{split}
		\end{align}
		\item \label{graphons weak law} The supremum in \eqref{eq: graphons conv} is attained by a unique $\mu^* \in \Punif$, and if $(X_1,\ldots,X_n) \sim P$, then
		\begin{equation}
			\frac{1}{n}\sum_{i=1}^n\delta_{X_i} \to \int_0^1\mu^*_u \,du, \text{ weakly in law}. \label{eq: graphons weak law}
		\end{equation}
	\end{enumerate}
\end{theorem}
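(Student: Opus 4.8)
The plan is to combine the mean field approximation of Theorem~\ref{th:main-intro} (in the form of Corollary~\ref{cor:quadratic}) with a graphon-limit analysis of the resulting $n$-dependent variational problem, and then to read off the empirical-measure limit from Corollary~\ref{co:linearstat-concentration}. \emph{Step 1 (reduction to the mean field problem).} Since $V$ and $K$ are even, $f$ is invariant under $x\mapsto-x$, hence so is $P$; by the uniqueness in Theorem~\ref{th:main-intro} the optimizer $Q^*$ is reflection-invariant, so each $Q^*_i$ is symmetric about the origin and $\E_{Q^*}[X_i-X_j]=0$. Thus Corollary~\ref{cor:quadratic} applies, and together with $\tr(J_n^2)=o(n)$ it yields $R_f\le\tr(J_n^2)\,a\kappa^{-2}e^{b^2/\kappa}=o(n)$. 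By the identity \eqref{entproj-id1}, $\tfrac1n\log\int_{\R^n}e^{f}\,dx$ and $\tfrac1n\sup_{Q\in\Pprod(\R^n)}(\int f\,dQ-H(Q))$ therefore differ by $o(1)$, so it suffices to analyze the mean field problem.

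\emph{Step 2 (encoding as a graphon problem).} For a product measure $Q=Q_1\times\cdots\times Q_n$, using $J_{ii}=0$, the symmetry of the interaction, and $\int e^{V}dx=1$ (which gives $H(Q_i)=H(Q_i\,|\,\rho)+\int V\,dQ_i$ for $\rho(dx)=e^{V(x)}dx$), one computes $\int f\,dQ-H(Q)=\tfrac12\sum_{i\neq j}J_{ij}\iint K(x-y)Q_i(dx)Q_j(dy)-\sum_iH(Q_i\,|\,\rho)$. Encoding $Q$ as the block measure $\mu^{(n)}\in\Punif$ with $\mu^{(n)}_u=Q_{\lceil nu\rceil}$, this equals exactly $\Phi_n(\mu^{(n)})$, where $\Phi_n(\mu):=\tfrac12\int W_{nJ_n}(u,v)K(x-y)\,\mu(du,dx)\mu(dv,dy)-\int_0^1H(\mu_u\,|\,\rho)\,du$; and as $Q$ ranges over $\Pprod(\R^n)$, $\mu^{(n)}$ ranges over the block-constant measures in $\Punif$. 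Hence $\tfrac1n\sup_Q(\int f\,dQ-H(Q))=\sup\{\Phi_n(\mu):\mu\text{ block-constant}\}$, while the right side of \eqref{eq: graphons conv} is $\sup_{\mu\in\Punif}\Phi_\infty(\mu)$, with $\Phi_\infty$ defined like $\Phi_n$ but with $W_{nJ_n}$ replaced by $W$. So the remaining task is to show $\sup_\mu\Phi_n(\mu)\to\sup_{\mu\in\Punif}\Phi_\infty(\mu)$, that the right side is attained at a unique $\mu^*$, and that the block measures $\mu^{*,(n)}$ of the $n$-optimizers converge weakly to $\mu^*$.

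\emph{Step 3 (the graphon limit and uniqueness).} Since $K\le0$ and $W_{nJ_n}\ge0$, the energy term is $\le0$, so $\Phi_n(\mu)\le-\int_0^1H(\mu_u\,|\,\rho)\,du\le0$; evaluating at the element with all fibers equal to $\rho$ and using $\iint W_{nJ_n}\to\iint W<\infty$ (from $d_\square$, after relabeling by the maps realizing $\delta_\square$) together with the Gaussian tail of $\rho$ against the at-most-$e^{b|x|/2}$ growth of $|K|$ from Assumption~\ref{ass:Gibbs}, one sees $\sup_\mu\Phi_n(\mu)$ is bounded below. Hence every near-optimizer $\mu^{(n)}$, and $\mu^{*,(n)}$, satisfies $\int_0^1H(\mu^{(n)}_u\,|\,\rho)\,du\le C$; applying the entropy inequality fiberwise to $\epsilon|x|^2$ with $\epsilon<\kappa/2$ gives $\int_0^1\!\int e^{\epsilon|x|^2}\mu^{(n)}_u(dx)\,du\le C'$, so $\{\mu^{(n)}\}$ is tight on $[0,1]\times\R$. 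For the lower bound $\liminf_n\sup_\mu\Phi_n(\mu)\ge\Phi_\infty(\mu)$ I would use block averages of a sufficiently regular near-optimal $\mu$: the entropy term converges by convexity of $H(\cdot\,|\,\rho)$ (Jensen for ``$\le$'', lower semicontinuity for ``$\ge$''), and the energy term converges by the graphon counting estimate since $W_{nJ_n}\to W$ in cut metric. For the upper bound I would pass to a weak subsequential limit $\bar\mu$ of the relabeled near-optimizers; lower semicontinuity of $\mu\mapsto\int_0^1H(\mu_u\,|\,\rho)\,du$ and upper semicontinuity of the energy under joint cut-metric/weak convergence — the latter obtained by truncating $K$ at level $-R$, harmless because $K\le0$ and the uniform integrability above controls the error uniformly in $n$ as $R\to\infty$ — give $\Phi_\infty(\bar\mu)\ge\limsup_n\Phi_n(\mu^{(n)})$, so $\bar\mu$ is a maximizer (this also furnishes existence of $\mu^*$). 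Uniqueness of $\mu^*$ I would obtain from displacement convexity along fiberwise $\W_2$-geodesics $(\mu^t)$, in the spirit of Theorem~\ref{th:main-intro}: $\kappa$-log-concavity of $\rho$ makes $t\mapsto\int_0^1H(\mu^t_u\,|\,\rho)\,du$ $\kappa$-strongly convex, while $K$ concave and $W\ge0$ make $t\mapsto\tfrac12\int W K\,d\mu^t\,d\mu^t$ concave (the argument of $K$ is affine in $t$ along the geodesic), so $\Phi_\infty$ is strictly displacement concave and has at most one maximizer; uniqueness then forces $\mu^{*,(n)}\to\mu^*$ along the full sequence. This proves \eqref{eq: graphons conv} and the uniqueness assertion in part (2).

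\emph{Step 4 (the empirical measure) and the main obstacle.} Since $R_f=o(n)$, Corollary~\ref{co:linearstat-concentration} gives, for every bounded $1$-Lipschitz $\varphi$, $\E_P[(\tfrac1n\sum_i\varphi(X_i)-\int\varphi\,d(\tfrac1n\sum_iQ^*_i))^2]\le(1+\sqrt{2R_f})^2/(\kappa n)\to0$; as $\tfrac1n\sum_iQ^*_i$ is the second marginal of $\mu^{*,(n)}$, it converges weakly to $\int_0^1\mu^*_u\,du$ by Step~3, so $\tfrac1n\sum_i\varphi(X_i)\to\int\varphi\,d\big(\int_0^1\mu^*_u\,du\big)$ in $P$-probability for every such $\varphi$, and since this class is convergence-determining, \eqref{eq: graphons weak law} follows. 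The main obstacle is Step~3, and within it the (semi)continuity of the interaction energy under cut-metric convergence of $W_{nJ_n}$ combined with weak convergence of $\mu^{(n)}$: the graphons $W_{nJ_n}$ need not be uniformly bounded in the sparse regime and $K$ is unbounded, so one must first truncate $K$ — using $K\le0$ and the uniform Gaussian-type moment bounds from $\kappa$-log-concavity of $\rho$ — before the standard counting/cut-metric machinery applies, with $\tr(J_n^2)=o(n)$ doing double duty by also controlling the mean field error $R_f$.
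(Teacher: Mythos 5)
Your proposal is correct and follows essentially the same route as the paper: reduce to the mean field problem via the evenness of $f$ and Corollary \ref{cor:quadratic}, embed product measures as block measures in $\Punif$, prove matching upper/lower bounds by tightness plus truncation of the (nonpositive) interaction and block-averaging with Jensen for the entropy, obtain uniqueness by fiberwise displacement convexity, and conclude \eqref{eq: graphons weak law} from Corollary \ref{co:linearstat-concentration}. The only cosmetic differences are that the paper truncates $K$ spatially (multiplying by $1_{\{|x|,|y|\le m\}}$, which is what makes the Stone--Weierstrass/cut-metric continuity lemma go through) rather than in value, and it splits $H(\cdot\,|\,\rho)$ into $H(\cdot)-\int V$ to extract strict concavity from $V$ rather than invoking $\kappa$-strong displacement convexity of the relative entropy directly.
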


\begin{remark}
It follows from \cite[Propositions C.5 and C.15]{borgs-Lp-1} that the condition $\tr(J_n^2)=o(n)$ holds automatically if $J_n$ is the adjacency matrix of a simple graph $G_n=([n],E_n)$ multiplied by $n/(2|E_n|)$, and $nJ_n$ converges in cut metric. 
However, if $J_n$ is a general matrix, we need the added assumption $\tr(J_n^2)=o(n)$ in Theorem \ref{thm:graphons}.
\end{remark}

\subsubsection{On the symmetry of $Q^*$} \label{se:symmetry}

	This short section elaborates on conditions under which one can check that $\E_{Q^*}[X_i-X_j]=0$, which was needed in Corollary \ref{cor:quadratic}. The main two conditions we found are evenness and a weak form of permutation invariance. 
	
	\begin{definition}
		Let $S$ be a set of permutations of $[n]$. We say that $S$ is \emph{transitive} if for every $i,j \in [n]$ there exists $\pi \in S$ such that $\pi(i)=j$. We say also that a function $f$ on $\R^n$ is \emph{invariant under $S$} if $f(x_1,\ldots,x_n)=f(x_{\pi(1)},\ldots,x_{\pi(n)})$ for every $x \in \R^n$ and $\pi \in S$.
	\end{definition}
	
	\begin{lemma} \label{le:evenness}
		In the setting of Theorem \ref{th:main-intro}, the following implications hold:
		\begin{enumerate}[(1)]
			\item If $f$ is even, meaning $f(-x)=f(x)$ for all $x$, then $Q^*_i$ is even for each $i=1,\ldots,n$.
		
			\item Suppose $f$ is invariant under a transitive set of permutations. Then $Q^*_1=Q^*_2=\cdots=Q^*_n$. 
		\end{enumerate}
		In both cases, we  have $\E_{Q^*}[X_i-X_j]=0$ for all $i,j \in [n]$.
	\end{lemma}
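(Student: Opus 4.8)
The plan is to derive both statements from the \emph{uniqueness} of $Q^*$ as the optimizer of \eqref{MFopt-mainthm}, which is the content of Theorem \ref{th:main-intro}(3). The guiding principle is that a symmetry of $f$ induces a symmetry of the variational problem, hence of its unique maximizer. Precisely: if $T : \R^n \to \R^n$ is a bijection that preserves Lebesgue measure and maps product measures to product measures, then the pushforward $T_\#Q^*$ is again a product measure with $H(T_\#Q^*) = H(Q^*)$ and $\int f\,d(T_\#Q^*) = \int (f\circ T)\,dQ^*$; so if in addition $f\circ T = f$, then $T_\#Q^*$ attains the same (finite, by Lemma \ref{le:MFsup-finite}) value as $Q^*$ in \eqref{MFopt-mainthm}, and therefore $T_\#Q^* = Q^*$.

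For part (1) I would take $T$ to be the reflection $R(x) = -x$. It acts coordinatewise, so $R_\#Q^*$ is a product measure whose $i$-th marginal is the reflection of $Q^*_i$, and evenness of $f$ gives $f\circ R = f$. The principle above then yields $R_\#Q^* = Q^*$, i.e.\ each $Q^*_i$ is even.

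For part (2) I would take, for each $\pi$ in the given transitive set $S$ of permutations, the coordinate permutation $(T_\pi x)_i := x_{\pi(i)}$. This is a measure-preserving bijection sending the product measure $Q^*$ to the product measure with $i$-th marginal $Q^*_{\pi(i)}$, and invariance of $f$ under $\pi$ means $f\circ T_\pi = f$. Hence $Q^*_{\pi(i)} = Q^*_i$ for every $i$ and every $\pi \in S$, and transitivity of $S$ forces $Q^*_1 = \cdots = Q^*_n$. For the final assertion, Theorem \ref{th:main-intro}(2) ensures $Q^*$ is $\kappa$-log-concave, so each $Q^*_i$ has a finite first moment; in case (1) an even law on $\R$ with finite mean has mean zero, so $\E_{Q^*}[X_i]=0$ for all $i$, while in case (2) $Q^*_i = Q^*_j$ gives $\E_{Q^*}[X_i] = \E_{Q^*}[X_j]$. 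In both cases $\E_{Q^*}[X_i - X_j] = 0$.

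I do not anticipate a serious obstacle: the argument is essentially bookkeeping. The one point requiring attention is checking that $T_\#Q^*$ is a legitimate competitor with objective value equal to that of $Q^*$ — that is, $f \in L^1(T_\#Q^*)$ and $H(T_\#Q^*)<\infty$ — but this transfers verbatim from $Q^*$ once one uses $f\circ T = f$ and that $T$ is measure-preserving. If one prefers to avoid invoking the value of the objective at all, the same symmetry argument can instead be run through the fixed-point equation \eqref{fixedpoint-mainthm} and its uniqueness in Theorem \ref{th:main-intro}(1), by verifying directly that $T_\#Q^*$ solves that equation; this is marginally more computational.
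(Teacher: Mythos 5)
Your proposal is correct and takes essentially the same route as the paper: the paper also constructs the reflected measure $R_i(x)=Q^*_i(-x)$ (resp.\ the permuted product measure $R_k=Q^*_{\pi(k)}$), notes it achieves the same objective value in \eqref{MFopt-mainthm}, and invokes the uniqueness of the optimizer from Theorem \ref{th:main-intro}(3). Your additional remark that the finite first moment follows from $\kappa$-log-concavity is a correct (and slightly more careful) justification of the final assertion.
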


	When $f$ is of the form \eqref{def:Gibbs-potential}, it is clear that $f$ is even if $K$ and $V$ are, and indeed $V$ is assumed even in Theorem \ref{thm: regular graphs} to enable an application of Lemma \ref{le:evenness}(1).
	We will not apply Lemma \ref{le:evenness}(2), but we find it interesting in its own right. For instance, (2) holds if $f$ is symmetric, i.e., invariant under all permutations. Another natural case covered by (2) is where $f$ is of the form \eqref{def:Gibbs-potential} and $J$ is a scalar multiple of the adjacency matrix of a \emph{vertex transitive} graph.

\subsection{High dimensional Bayesian linear regression}
Our next application is concerned with high dimensional Bayesian linear regression. Suppose we observe a set of data $ \left\{(y_i, X_i)\right\}_{i=1}^n $, where $ y_i \in \R $ and $ X_i \in \R^p $. Let $ y =  (y_1, \dots, y_n)^\top \in \R^n$ and $ \mathbf{X}^\top = (X_1, \dots, X_n) \in \R^{p \times n}$. Consider the linear regression model 
\begin{align*}
	y = \mathbf{X} \beta + \varepsilon, \quad \varepsilon \sim \gamma_{\sigma^2},
\end{align*}
where $ \gamma_{\sigma^2} $ denotes the Gaussian with mean $0$ and covariance matrix $\sigma^2I$. Here $\beta \in \R^p$ is the unknown parameter.

Following a Bayesian approach, assume that $\beta =(\beta_1,\ldots,\beta_p)^\top\stackrel{i.i.d.}{\sim}\pi$, where $\pi$ is a prior distribution on $\R$ with density proportional to $e^V\in L^1(\R)$ for some $V: \R \to \R$. The posterior density $\pi_{y,\mathbf{X}}$ of $\beta$ given $y$ and $ \mathbf{X} $ is then  proportional to $e^{f_{y,\mathbf{X}}}$, where
\begin{align*}
	f_{y,\mathbf{X}}(\beta) := \sum_{i=1}^pV(\beta_i) - \frac{1}{2\sigma^2}\left|y-\mathbf{X}\beta\right|^2 .
\end{align*}
The posterior distribution is the central object of inference in Bayesian statistics. Note that even though ${ \beta}$ has independent coordinates under the prior, the coordinates of ${\beta}$ are no longer independent under the posterior. Frequently, mean-field techniques are used to approximate such complex posterior distributions, including and beyond the set up of Bayesian linear regression (see \cite{alquier2016properties,blei2017variational,ray2021variational,wainwright2008graphical,zhang2020theoretical}  and references therein). In particular, it is useful to understand what conditions guarantee the validity of a mean field approximation, showing that the posterior is close to a product measure. Using Theorem \ref{th:main-intro}, the following corollary provides sufficient conditions under which the posterior is indeed mean-field. Leveraging this, it also derives a law of large numbers for the empirical measure under the true posterior distribution.

\begin{corollary} \label{co:bayes}
	Assume $V$ is $\kappa_1$-concave for some $\kappa_1 \in \R$, and that there exists $ c_1 \ge 0 $ and $0 \le c_2  < \kappa/2$ such that $ |V(x)| \le  c_1 e^{c_2 x^2} $ for all $x \in \R$.
	Set $J={\bf X}^\top{\bf X}\in \R^{p\times p}$, and assume that $J \ge \kappa_2 I$ for some $\kappa _2 \in \R$ such that $\kappa_1+\kappa_2\sigma^{-2} > 0$. Then 
	\begin{align}
			\sup_{y\in \R^n}\bigg|\log \int_{\R^p}e^{f_{y,{\bf X}}(\beta)}d\beta-	\sup_{Q \in \Pprod(\R^p)} \left( \int_{\R^p} f_{y,{\bf X}}\,dQ- H(Q)\right)\bigg| \le \frac{1}{(\kappa_1 \sigma^2 + \kappa_2 )^2}\sum_{1 \le i < j \le p}J_{ij}^2.  \label{eq: bayes-MF}
		\end{align}
		Moreover, for every $y \in \R^n$, the inner supremum in \eqref{eq: bayes-MF} is attained by a unique $Q^*_y \in \Pprod(\R^p)$, and for any 1-Lipschitz function  $\varphi : \R \to \R$, we have
		\begin{align}
				\sup_{y\in \R^n}\E_{\pi_{y,\mathbf{X}}}\Bigg[\bigg(\frac{1}{p}\sum_{i=1}^p\varphi(\beta_i)-\frac{1}{p}\sum_{i=1}^p \E_{Q^*_y}\big[\varphi(\beta_i)\big]\bigg)^2\Bigg] \le \frac{\sigma^2\Big(\kappa_1\sigma^2 + \kappa_2 + \sqrt{2\sum_{1 \le i < j \le p}J_{ij}^2}\Big)^2}{p(\kappa_1\sigma^2 + \kappa_2)^3}. \label{eq: bayes-LLN}
		\end{align}
\end{corollary}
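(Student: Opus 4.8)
The plan is to apply Theorem \ref{th:main-intro} to the posterior $P = \pi_{y,\mathbf{X}}$, i.e.\ with $f = f_{y,\mathbf{X}}$, and then Corollary \ref{co:linearstat-concentration} for the concentration estimate; all bounds will come out uniform in $y$ because the Hessian of $f_{y,\mathbf{X}}$ does not depend on $y$. Write $\kappa := \kappa_1 + \kappa_2 \sigma^{-2} > 0$, the constant appearing in the hypothesis $c_2 < \kappa/2$. First I would verify the hypotheses of Theorem \ref{th:main-intro}: the function $f_{y,\mathbf{X}}$ is $C^2$ (as $V$ is), with
\begin{align*}
	\nabla^2 f_{y,\mathbf{X}}(\beta) = \mathrm{diag}\big(V''(\beta_1),\dots,V''(\beta_p)\big) - \sigma^{-2}\mathbf{X}^\top\mathbf{X} \le -\kappa_1 I - \kappa_2\sigma^{-2} I = -\kappa I,
\end{align*}
using $V'' \le -\kappa_1$ and $J = \mathbf{X}^\top\mathbf{X} \ge \kappa_2 I$; hence $P$ is $\kappa$-log-concave, and automatically a probability measure since $\kappa$-concavity of $f_{y,\mathbf{X}}$ makes $e^{f_{y,\mathbf{X}}}$ integrable. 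For the growth bound, $|f_{y,\mathbf{X}}(\beta)| \le \sum_{i=1}^p |V(\beta_i)| + \tfrac{1}{2\sigma^2}|y - \mathbf{X}\beta|^2 \le p c_1 e^{c_2|\beta|^2} + C(1 + |\beta|^2)$ for some $C = C(y,\mathbf{X},\sigma)$, and bounding $C(1+|\beta|^2) \le C' e^{\varepsilon|\beta|^2}$ for a small $\varepsilon \in (0,\kappa/2)$ gives $|f_{y,\mathbf{X}}(\beta)| \le (pc_1 + C')e^{\max(c_2,\varepsilon)|\beta|^2}$ with $\max(c_2,\varepsilon) < \kappa/2$, as required.

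Then parts (1) and (3) of Theorem \ref{th:main-intro} produce the unique mean field optimizer $Q^*_y \in \Pprod(\R^p)$, and part (4) gives
\begin{align*}
	0 \le R_{f_{y,\mathbf{X}}} \le \frac{1}{\kappa^2}\sum_{1\le i<j\le p}\E_{Q^*_y}\big[|\partial_{ij}f_{y,\mathbf{X}}(\beta)|^2\big].
\end{align*}
The decisive observation is that for $i\ne j$ the cross-derivative $\partial_{ij}f_{y,\mathbf{X}}(\beta) = -\sigma^{-2}(\mathbf{X}^\top\mathbf{X})_{ij} = -\sigma^{-2}J_{ij}$ is a constant, so $\E_{Q^*_y}[|\partial_{ij}f_{y,\mathbf{X}}|^2] = \sigma^{-4}J_{ij}^2$ with no dependence on $Q^*_y$ or $y$, and therefore
\begin{align*}
	R_{f_{y,\mathbf{X}}} \le \frac{1}{\kappa^2\sigma^4}\sum_{1\le i<j\le p}J_{ij}^2 = \frac{1}{(\kappa_1\sigma^2+\kappa_2)^2}\sum_{1\le i<j\le p}J_{ij}^2,
\end{align*}
since $\kappa\sigma^2 = \kappa_1\sigma^2 + \kappa_2$. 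Because $R_{f_{y,\mathbf{X}}} \ge 0$, this is exactly the quantity inside the absolute value in \eqref{eq: bayes-MF}, and the bound is uniform in $y$, proving \eqref{eq: bayes-MF}.

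The concentration bound \eqref{eq: bayes-LLN} then follows by applying Corollary \ref{co:linearstat-concentration} to $P = \pi_{y,\mathbf{X}}$ (with dimension $p$ and constant $\kappa$): for any $1$-Lipschitz $\varphi$,
\begin{align*}
	\E_{\pi_{y,\mathbf{X}}}\bigg[\bigg(\frac1p\sum_{i=1}^p\varphi(\beta_i) - \frac1p\sum_{i=1}^p\E_{Q^*_y}[\varphi(\beta_i)]\bigg)^2\bigg] \le \frac{\big(1+\sqrt{2R_{f_{y,\mathbf{X}}}}\big)^2}{\kappa p} = \frac{\sigma^2\big(1+\sqrt{2R_{f_{y,\mathbf{X}}}}\big)^2}{(\kappa_1\sigma^2+\kappa_2)p},
\end{align*}
after which substituting $1 + \sqrt{2R_{f_{y,\mathbf{X}}}} \le \big(\kappa_1\sigma^2 + \kappa_2 + \sqrt{2\sum_{i<j}J_{ij}^2}\big)/(\kappa_1\sigma^2+\kappa_2)$ yields precisely the right-hand side of \eqref{eq: bayes-LLN}, again uniformly in $y$.

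I do not expect any real obstacle: the argument is a direct instantiation of Theorem \ref{th:main-intro} and Corollary \ref{co:linearstat-concentration}. The only mildly delicate step is the at-most-Gaussian growth of $|f_{y,\mathbf{X}}|$, which holds because the quadratic data term is dominated in the exponent by any positive multiple of $|\beta|^2$; the rest rests on the elementary fact that $\nabla^2 f_{y,\mathbf{X}}$ has constant off-diagonal entries $-J_{ij}/\sigma^2$, which is exactly what renders both error bounds explicit and $y$-independent.
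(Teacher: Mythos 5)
Your proof is correct and is exactly the argument the paper intends: the authors omit the proof, stating it is "a direct application of Theorem \ref{th:main-intro} and Corollary \ref{co:linearstat-concentration}" via the Hessian bound $\nabla^2 f_{y,\mathbf{X}} \le -(\kappa_1+\kappa_2\sigma^{-2})I$, which is precisely your instantiation with $\kappa=\kappa_1+\kappa_2\sigma^{-2}$, constant off-diagonal entries $-J_{ij}/\sigma^2$, and the substitution of the $R_f$ bound into \eqref{ineq:weakLLN}. Your verification of the growth condition and the algebra converting $\kappa = (\kappa_1\sigma^2+\kappa_2)/\sigma^2$ into the stated constants are both accurate.
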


The proof of this corollary is by a direct application of  Theorem \ref{th:main-intro} and Corollary \ref{co:linearstat-concentration}, and is hence omitted. Indeed, the concavity assumption on $V$ and the lower bound on $J$ ensure that $\nabla^2 f_{y,\mathbf{X}}(\beta) \le -( \kappa_1+\kappa_2\sigma^{-2})I$ for all $\beta$.

\begin{remark}
	The uniformity in $y$ in \eqref{eq: bayes-MF} implies that the mean field approximation continues to hold with high probability, under any distributional assumption on $y$. Note that when $ n,p \to \infty $ in any arbitrary manner, the right-hand side of  \eqref{eq: bayes-MF} and \eqref{eq: bayes-LLN} are $ o(p) $ and $ o(1) $ respectively, as long as $ \sum_{1 \le i < j \le p} J_{ij}^2 = o(p)$ when $ n,p \to \infty$. We also point out that the same conclusion as in \eqref{eq: bayes-MF} above was derived in \cite[Theorem 1]{mukherjee2021variational} using very different techniques, under the assumption that the prior distribution $\pi$ is compactly supported. In our setup, we allow the support to be non-compact, but instead assume that the prior distribution is strongly log-concave. One added advantage of our setup is that we also get the law of large numbers under no extra assumptions. 
\end{remark}

\subsection{Stochastic control} \label{se:control}

This section describes an application of Corollary \ref{co:refmeas} to a class of high-dimensional stochastic optimal control problems. Let $T > 0$, and let $g : \R^n \to \R$ be $C^2$ and concave. Consider the stochastic control problem
\begin{align}
	V_{\mathrm{orig}} \coloneqq \sup\,  \E\Bigg[ g(X_T) -  \frac{1}{2n} \sum_{i=1}^n \int_0^T |\alpha_i(t,X_t)|^2dt\Bigg], \label{control1}
\end{align}
where the supremum is over pairs $(\alpha,X)$, where  $\alpha=(\alpha_1,\ldots,\alpha_n) : [0,T] \times \R^n \to \R^n$ is a  measurable function and $X=(X^1,\ldots,X^n)$ a weak solution of the stochastic differential equation (SDE)
\begin{align}
	dX^i_t = \alpha_i(t,X_t)dt + dB^i_t, \qquad X^i_0=0, \ \ i=1,\ldots,n, \label{SDE}
\end{align}
defined on an arbitrary filtered probability space $(\Omega,\F,\FF,\PP)$,
satisfying also  $\int_0^T |\alpha(t,X_t)|^2  \,dt < \infty$ a.s.
Here $B=(B^1,\ldots,B^n)$ is an $n$-dimensional $\FF$-Brownian motion, and $X$ is required to be $\FF$-adapted. 
We call such a pair $(\alpha,X)$ \emph{admissible}.
There is a well known semi-explicit solution to \eqref{control1} which has come to be known as the \emph{F\"ollmer drift}, which we will discuss in Remark \ref{re:optimalcontrol} below.

We interpret $i=1,\ldots,n$ as the indices of different ``players," each facing an independent source of randomness $B^i$, and each choosing a control $\alpha_i$ which can depend on the full information of all $n$ players. Players ``cooperate" in the sense that $(\alpha_1,\ldots,\alpha_n)$ are chosen together to optimize \eqref{control1}.
When $g$ is of the form
\begin{align}
	g(x) = G\bigg(\frac{1}{n}\sum_{i=1}^n \delta_{x_i}\bigg), \ \ \text{for some } G : \P(\R)\to\R, \label{control-g=G(emp)}
\end{align}
we recover a well-studied class of problems which goes under the name \emph{mean field control} in the cooperative setting \cite{carmona2015forward}, or \emph{mean field games} in the competitive (Nash equilibrium) setting \cite{huang2006large,lasry2007mean}; see \cite{carmona2018probabilistic} for an overview.
In this setting, it is typically argued that $V_{\mathrm{orig}}$ converges to the value of a limiting ``mean field" control problem, and the optimal control $\widehat\alpha$ from this limiting problem can be used to construct \emph{distributed} controls $\alpha_i(t,x_1,\ldots,x_n) = \widehat\alpha(t,x_i)$ which are provably approximately optimal for the $n$-player problem for $n$ large.
This is a very desirable outcome, because distributed controls are much simpler (lower-dimensional).

Our results give a new non-asymptotic perspective on control problems of this form, by showing how to construct approximately optimal distributed controls for much more general $g$ than in \eqref{control-g=G(emp)}.
The link between  \eqref{control1} and the setting of Section \ref{sec: setup} is the formula 
\begin{align}
	V_{\mathrm{orig}} = \sup_{Q \in \P(\R^n)} \left(\int_{\R^n} g\,dQ - \frac{1}{n}H(Q\,|\,\gamma_T)\right) = \frac{1}{n}\log\int_{\R^n} e^{ng}\,d\gamma_T, \label{BBD}
\end{align}
where we recall that $\gamma_T$ denotes the centered Gaussian with covariance matrix $TI$.
This formula is essentially a well known consequence of Girsanov's theorem.\footnote{Experts might recognize a similarity with a famous formula often named after Bou\'e-Dupuis \cite{boue1998variational} or Borell \cite{borell2000diffusion}, though the form we present here is simpler because of our restriction to Markovian controls, whereas \cite{boue1998variational,borell2000diffusion} work with \emph{open-loop} controls, i.e., controls specified as arbitrary progressively measurable processes.}
The mean field approximation  also admits a natural control-theoretic interpretation. Define
\begin{align}
	V_{\mathrm{dstr}} := \sup \, \E\Bigg[ g(X_T) -  \frac{1}{2n} \sum_{i=1}^n \int_0^T |\alpha_i(t,X_t)|^2dt\Bigg], \label{control-distr}
\end{align}
where the supremum is now over admissible pairs $(\alpha,X)$ for which $\alpha=(\alpha_1,\ldots,\alpha_n)$ is of the form
\begin{align*}
	\alpha_i(t,x_1,\ldots,x_n) = \widehat\alpha_i(t,x_i),
\end{align*}
for some measurable $\widehat\alpha_i : [0,T] \times \R \to \R$, and also for which $X^1_t,\ldots,X^n_t$ are independent for each $t \in [0,T]$ (this second statement being redundant if the SDE \eqref{SDE} driven by this $\alpha$ is known to be unique in law).
Let us call any such pair $(\alpha,X)$ a \emph{distributed admissible pair}.
We will derive the following result from Corollary \ref{co:refmeas}, after first showing that $V_{\mathrm{dstr}}$ is nothing but the mean field approximation of \eqref{BBD}, in the sense that
\begin{align}
	V_{\mathrm{dstr}} = \sup_{Q \in \Pprod(\R^n)} \left(\int_{\R^n} g\,dQ - \frac{1}{n}H(Q\,|\,\gamma_T)\right) . \label{BBD-MF}
\end{align}

\begin{corollary} \label{co:stochcontrol}
	Let $g:\R^n\to\R$ be $C^2$ and concave, and let $T > 0$. Assume there exists $ c_1 \ge 0 $ and $0 \le c_2  < 1/2T $ such that $ |g(x)| \le  c_1 e^{c_2 |x|^2} $ for all $ x \in \R^n $. Define $V_{\mathrm{orig}}$ and $V_{\mathrm{dstr}}$ by \eqref{control1} and \eqref{control-distr}, respectively. Then the formulas \eqref{BBD} and \eqref{BBD-MF} hold, and
	\begin{align}
		0 \le V_{\mathrm{orig}} - V_{\mathrm{dstr}} \le  nT^2  \sum_{1 \le i < j \le n} \E_{Q^*}[|\partial_{ij}g(X)|^2], \label{ineq:controlbound}
	\end{align}
	where $Q^* = Q^*_1 \times \cdots \times Q^*_n \in \Pprod(\R^n)$ is the unique product measure with strictly positive density a.e. satisfying $g \in L^1(Q^*)$ and the fixed point equation
	\begin{align*}
		Q^*_i(dx_i) = Z_i^{-1} \exp \big(n\E_{Q^*}[g(X)\,|\,X_i=x_i]\big)\,\gamma_T(dx_i), \qquad  Z_i > 0, \ i=1,\ldots,n. 
	\end{align*}
\end{corollary}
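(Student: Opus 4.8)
The plan is to first establish the two stochastic-control representations \eqref{BBD} and \eqref{BBD-MF}, and then recognize $n(V_{\mathrm{orig}}-V_{\mathrm{dstr}})$ as the quantity $R^\rho_{ng}$ of Corollary \ref{co:refmeas}, applied with reference measure $\rho=\gamma_T$ (a product of one-dimensional centered Gaussians of variance $T$, each $(1/T)$-log-concave) and with $ng$ in place of $g$. For the representation \eqref{BBD}: given an admissible pair $(\alpha,X)$, let $\PP^X$ be the law of the path $(X_t)_{t\le T}$ on $C([0,T];\R^n)$ and $\mathbb{W}$ the Wiener measure started at the origin. Girsanov's theorem yields $H(\PP^X\,|\,\mathbb{W})=\tfrac12\E\int_0^T|\alpha(t,X_t)|^2\,dt$ whenever the latter is finite (the objective in \eqref{control1} being $-\infty$ otherwise), so the running cost equals $\tfrac1n H(\PP^X\,|\,\mathbb{W})$. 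Since $\gamma_T$ is the time-$T$ marginal of $\mathbb{W}$, the data-processing inequality gives $H(Q\,|\,\gamma_T)\le H(\PP^X\,|\,\mathbb{W})$ for $Q$ the law of $X_T$, whence $V_{\mathrm{orig}}\le\sup_{Q\in\P(\R^n)}(\int g\,dQ-\tfrac1n H(Q\,|\,\gamma_T))$. For the matching lower bound I would use the F\"ollmer drift $\alpha^*(t,x)=\nabla_x\log h(t,x)$ with $h(t,x)=\int_{\R^n}e^{ng(x+y)}\gamma_{T-t}(dy)$: it is admissible (here concavity of $g$ gives the linear upper bound $g(x)\le g(0)+\nabla g(0)\cdot x$, so $\int e^{ng}\,d\gamma_T<\infty$ and $h$ is finite and smooth), it drives $X_T$ to the tilted law $e^{ng}\,d\gamma_T/\int e^{ng}\,d\gamma_T$, and a direct computation gives its value as $\tfrac1n\log\int e^{ng}\,d\gamma_T$. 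Combined with the Gibbs variational principle \eqref{Gibbs} this proves \eqref{BBD}.

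For \eqref{BBD-MF}: when $(\alpha,X)$ is a distributed admissible pair the system \eqref{SDE} decouples into $n$ autonomous one-dimensional SDEs, so $\PP^X$ is a product measure and $H(\PP^X\,|\,\mathbb{W})=\sum_{i=1}^n H(\PP^{X^i}\,|\,\mathbb{W}^1)$; in particular the terminal law $Q$ lies in $\Pprod(\R^n)$. Running the argument of the previous paragraph coordinatewise gives the upper bound $V_{\mathrm{dstr}}\le\sup_{Q\in\Pprod(\R^n)}(\int g\,dQ-\tfrac1n H(Q\,|\,\gamma_T))$, and the lower bound follows by applying the one-dimensional F\"ollmer drift in each coordinate to realize an arbitrary product terminal law of finite relative entropy (this also makes the SDE unique in law, so the independence requirement in the definition of $V_{\mathrm{dstr}}$ is met).

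Combining \eqref{BBD} and \eqref{BBD-MF},
\[
n\big(V_{\mathrm{orig}}-V_{\mathrm{dstr}}\big)=\log\int_{\R^n}e^{ng}\,d\gamma_T-\sup_{Q\in\Pprod(\R^n)}\Big(\int_{\R^n}ng\,dQ-H(Q\,|\,\gamma_T)\Big)=R^{\gamma_T}_{ng}.
\]
The hypotheses of Corollary \ref{co:refmeas} hold for the function $ng$ (which is $C^2$, concave, and satisfies $|ng(x)|\le nc_1e^{c_2|x|^2}$ with $c_2<1/2T=\kappa/2$) and the reference measure $\rho=\gamma_T$ with $\kappa=1/T$. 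That corollary then supplies the unique $Q^*\in\Pprod(\R^n)$ with strictly positive density and $g\in L^1(Q^*)$ satisfying the fixed point equation $Q^*_i(dx_i)=Z_i^{-1}\exp(n\E_{Q^*}[g(X)\,|\,X_i=x_i])\,\gamma_T(dx_i)$, together with the bound $R^{\gamma_T}_{ng}\le\kappa^{-2}\sum_{1\le i<j\le n}\E_{Q^*}[|\partial_{ij}(ng)(X)|^2]=n^2T^2\sum_{1\le i<j\le n}\E_{Q^*}[|\partial_{ij}g(X)|^2]$. Dividing by $n$ yields \eqref{ineq:controlbound}.

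The substantive part of the argument — the only piece not reducible to Corollary \ref{co:refmeas} — is the rigorous proof of \eqref{BBD} and \eqref{BBD-MF}: justifying the Girsanov identity for general admissible (not necessarily Novikov) Markovian controls and weak solutions, carrying out the data-processing reduction to the terminal marginal, and verifying that the F\"ollmer drift, as well as its decoupled version used for $V_{\mathrm{dstr}}$, is admissible and attains the variational value under only the stated exponential-growth and concavity hypotheses. Although these facts are classical in spirit (cf.\ the Bou\'e--Dupuis / Borell formula referenced after \eqref{BBD}), their careful verification in the present setting is the main work; once done, the estimate follows immediately from Corollary \ref{co:refmeas}.
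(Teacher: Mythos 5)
Your proposal is correct and follows essentially the same route as the paper: establish \eqref{BBD} and \eqref{BBD-MF} via Girsanov plus the data-processing inequality for the upper bounds and the F\"ollmer/Schr\"odinger bridge for the lower bounds, then apply Corollary \ref{co:refmeas} with $\rho=\gamma_T$, $\kappa=1/T$, and $ng$ in place of $g$. The only (harmless) deviations are that you assert the Girsanov relation as an equality where the paper uses only the inequality $H(\QQ\,|\,\PP)\le\tfrac12\E\int_0^T|\alpha(t,X_t)|^2\,dt$ (the direction actually needed), and that for the distributed lower bound you realize an arbitrary product terminal law rather than just the optimizer $Q^*$ as the paper does.
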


	The proof is given in Section \ref{se:control-proofs}.
	Corollary \ref{co:stochcontrol} shows that distributed controls are approximately optimal for large $n$ if $n\|\sum_{i \neq j} \partial_{ij}g\|^2_\infty = o(1)$. As an example, if $g$ is of the form \eqref{control-g=G(emp)} and $G$ is twice continuously \emph{Wasserstein-} or  \emph{L-differentiable} in the sense of  \cite[Chapter 5.2]{carmona2018probabilistic}, then 
	\begin{align*}
		\partial_i g(x) = \frac{1}{n}D_mG\bigg(\frac{1}{n}\sum_{k=1}^n\delta_{x_k},x_i\bigg), \quad
		\partial_{ij} g(x) = \frac{1}{n^2}D_m^2G\bigg(\frac{1}{n}\sum_{k=1}^n\delta_{x_k},x_i,x_j\bigg), \ \ i \neq j.
	\end{align*}
	Hence, if $D_m^2G$ is bounded, then the right-hand side of \eqref{ineq:controlbound} is bounded by $T^2\|D_m^2G\|_\infty^2/2n$.
	
	\begin{remark} \label{re:optimalcontrol}
		In fact, the proof of Corollary \ref{co:stochcontrol} also yields an explicit characterization of the optimal distributed control in  \eqref{control-distr}, which we summarize  as follows.
		For a measure $Q \ll \gamma_T$, consider a process $X=(X_t)_{t \in [0,T]}$ such that $X_T \sim Q$ and the conditional law of the trajectory $(X_t)_{t \in [0,T]}$ given $X_T=x$ coincides with the law of the Brownian bridge from $0$ to $x$ on the time interval $[0,T]$. 
		This process might be called the \emph{Brownian} (or \emph{Schr\"odinger}) \emph{bridge with terminal law $Q$}.
		The associated control $\alpha$ is given by $\alpha(t,x)=\nabla_x\log \E[\frac{dQ}{d\gamma_T}(x+B_T-B_t)]$,
		as shown in full generality by F\"ollmer \cite{follmer1985entropy,follmer1986time}.
		Note that the associated SDE \eqref{SDE} may not be pathwise unique in general, but it always admits a weak solution $X$ with the law just described.
		The optimizer for the original control problem \eqref{control1} is nothing but the Brownian bridge with terminal law $P(dx)=Z^{-1}e^{ng(x)}\gamma_T(dx)$.
		Similarly, the optimizer for the distributed control problem \eqref{control-distr} is the Brownian bridge with terminal law  $Q^*$.
	\end{remark}

\begin{remark}

Proposition \ref{pr:tilt-Gaussian} admits a similar control-theoretic formulation in terms of \emph{deterministic} controls. Let $V_{\mathrm{det}}$ denote the value of the stochastic control problem \eqref{control1} but with the supremum limited to those admissible pairs $(\alpha,X)$ in which the control is non-random, i.e., $\alpha_i(t,x)=\tilde\alpha_i(t)$ for some $\tilde\alpha_i \in L^2[0,T]$. For these controls, $X_t$ is Gaussian with covariance matrix $tI$ for each $t \in [0,T]$. It can then be shown that
\begin{align*}
V_{\mathrm{det}} = \sup_{y \in \R^n } \left(\int_{\R^n} g\,d\gamma_{y,T} - \frac{1}{n}H(\gamma_{y,T}\,|\,\gamma_T)\right) = \sup_{y \in \R^n } \left(\int_{\R^n} g\,d\gamma_{y,T} - \frac{|y|^2}{2nT}\right),
\end{align*}
and Proposition \ref{pr:tilt-Gaussian} yields the following analogue of \eqref{ineq:controlbound}:
\begin{align*}
0 \le V_{\mathrm{orig}} - V_{\mathrm{det}} \le  \frac{nT^2}{2}  \sum_{i,j=1}^n \int_{\R^n}|\partial_{ij}g|^2 \,d\gamma_{y^*,T},
\end{align*}
where $y^* \in \R^n$ is the unique solution of $y^*=T\int_{\R^n}\nabla g\,d\gamma_{y^*,T}$.

\end{remark}

\section{Proof of the main theorem} \label{se:proof-mainthm}

The proofs will make use of the well known log-Sobolev and Poincar\'e inequalities for strongly log-concave measures, recalled here for convenience as we will use them in several parts of the paper.  The former is due to Bakry-\'Emery (see \cite{bakryemery} or \cite[Corollary 5.7.2]{bakry2013analysis}), and the latter is a consequence of the Brascamp-Lieb inequality \cite[Theorem 4.1]{brascamp2002extensions}.

\begin{theorem}[Log-Sobolev inequality] \label{th:logsobolev}
	If $h : \R^n \to \R$ is $C^2$ and $\kappa$-concave, and $R(dx)=e^{h(x)}dx$ is a probability measure,  then $R$ satisfies the log-Sobolev inequality,
	\begin{align*}
		H(Q\,|\,R) &\le \frac{1}{2\kappa}\int_{\R^n} \left|\nabla \log\frac{dQ}{dR}\right|^2\,dQ,
	\end{align*}
	for every $Q \in \P(\R^n)$ such that $Q \ll R$ and the weak gradient of $\log dQ/dR$ exists in $L^2(Q)$.
\end{theorem}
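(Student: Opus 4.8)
This is the classical Bakry--\'Emery criterion, so in the paper one would simply cite \cite{bakryemery} (or \cite{bakry2013analysis}); below I sketch two standard routes, leading with the one most consonant with the optimal-transport viewpoint of this paper. Note first that $\kappa$-concavity with $\kappa>0$ forces $h(x)\le h(0)+\nabla h(0)\cdot x-\tfrac{\kappa}{2}|x|^2$, so $R$ has sub-Gaussian tails and all the manipulations below sit on a comfortable footing; for general $Q$ one argues by a density/approximation argument, reducing to smooth compactly supported perturbations of $R$.

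\textbf{Displacement-convexity route.} Writing $\rho:=dQ/dx$, decompose $H(Q\,|\,R)=\int_{\R^n}\rho\log\rho\,dx+\int_{\R^n}(-h)\,dQ$. By McCann's displacement-convexity theory \cite{mccann1997convexity}, $Q\mapsto\int\rho\log\rho\,dx$ is convex along $\W_2$-geodesics in $\R^n$, while, since $-h$ is $\kappa$-convex, $Q\mapsto\int(-h)\,dQ$ is $\kappa$-displacement convex: along the geodesic $\mu_s=((1-s)\mathrm{id}+sT)_\#Q$, with $T$ the Brenier map from $Q$ to $R$, the integrand $s\mapsto(-h)((1-s)x+sT(x))$ has second derivative at least $\kappa|T(x)-x|^2$, and $\int|T(x)-x|^2\,dQ=\W_2^2(Q,R)$. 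Hence $\phi(s):=H(\mu_s\,|\,R)$ satisfies $\phi(1)\ge\phi(0)+\phi'(0^+)+\tfrac{\kappa}{2}\W_2^2(Q,R)$ on the geodesic from $Q$ to $R$. Using the first-variation (Otto-calculus) identity $\phi'(0^+)=\int_{\R^n}\nabla\log\frac{dQ}{dR}\cdot(T-\mathrm{id})\,dQ$, Cauchy--Schwarz, and $\phi(1)=H(R\,|\,R)=0$, this becomes the HWI inequality
\[
H(Q\,|\,R)\le\sqrt{I(Q\,|\,R)}\,\W_2(Q,R)-\tfrac{\kappa}{2}\W_2^2(Q,R),\qquad I(Q\,|\,R):=\int_{\R^n}\Big|\nabla\log\tfrac{dQ}{dR}\Big|^2dQ.
\]
Maximizing the right-hand side over $\W_2(Q,R)\ge0$ (the maximum is $I(Q\,|\,R)/2\kappa$, attained at $\W_2(Q,R)=\sqrt{I(Q\,|\,R)}/\kappa$) yields $H(Q\,|\,R)\le\frac{1}{2\kappa}I(Q\,|\,R)$, which is the assertion.

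\textbf{Semigroup route.} Alternatively, run the original argument: let $L=\Delta+\nabla h\cdot\nabla$ with associated semigroup $P_t$, symmetric and ergodic for $R$. For a nice $f>0$ with $\int f\,dR=1$ one has $\frac{d}{dt}H(P_tf\,dR\,|\,R)=-I(P_tf\,dR\,|\,R)$ and $\frac{d}{dt}I(P_tf\,dR\,|\,R)=-2\int\Gamma_2(\log P_tf)\,P_tf\,dR$, where the curvature bound $\nabla^2h\le-\kappa I$ gives $\Gamma_2\ge\kappa\Gamma$ pointwise. Thus $I$ decays at least like $e^{-2\kappa t}$ along the flow, and integrating $-\frac{d}{dt}H$ from $0$ to $\infty$ (using $P_tf\to1$) gives $H(Q\,|\,R)=\int_0^\infty I(P_tf\,dR\,|\,R)\,dt\le\frac{1}{2\kappa}I(Q\,|\,R)$.

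\textbf{Main obstacle.} In both approaches the algebra is short; the genuinely delicate part is analytic. In the transport proof it is the rigorous justification of the first-variation identity for the entropy along a $\W_2$-geodesic, which needs enough regularity of the optimal map (handled by smoothing $R$ and $Q$ and passing to the limit). In the semigroup proof it is the smoothing and decay estimates for $P_t$ and the validity of the integrations by parts underlying the $\Gamma_2$-computation. Either way these are routine in the present strongly log-concave setting, which is precisely why one is content to cite \cite{bakryemery}.
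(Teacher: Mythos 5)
Your proposal is correct and matches the paper exactly: the paper does not prove this theorem but simply cites Bakry--\'Emery (\cite{bakryemery} or \cite[Corollary 5.7.2]{bakry2013analysis}), precisely as you recommend. Both of your sketches (the HWI/displacement-convexity route and the original $\Gamma_2$ semigroup argument) are the standard correct derivations, and your algebra in optimizing $\sqrt{I}\,\W_2-\tfrac{\kappa}{2}\W_2^2$ checks out.
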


\begin{theorem}[Poincar\'e inequality] \label{th:poincare}
	If $h : \R^n \to \R$ is $\kappa$-concave, and $R(dx)=e^{h(x)}dx$ is a probability measure,  then $R$ satisfies the Poincar\'e inequality,
		\begin{align*}
		\Var_R(\varphi) \coloneqq \int_{\R^n} \varphi^2 \, dR- \left(\int_{\R^n} \varphi \, dR\right)^2 \le \frac{1}{\kappa} \int_{\R^n} |\nabla \varphi|^2 \, dR,
	\end{align*}
	for every continuously differentiable function $\varphi : \R^n \to \R$ in $L^1(R)$.
\end{theorem}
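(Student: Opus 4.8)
The plan is to deduce the Poincar\'e inequality from the Brascamp--Lieb inequality \cite[Theorem 4.1]{brascamp2002extensions}, which asserts that if $R(dx)=e^{-\phi(x)}dx$ is a probability measure with $\phi$ of class $C^2$ and $\nabla^2\phi(x)$ positive definite for every $x$, then
\[
\Var_R(\varphi)\le\int_{\R^n}\nabla\varphi(x)^\top\big(\nabla^2\phi(x)\big)^{-1}\nabla\varphi(x)\,R(dx)
\]
for every $C^1$ function $\varphi$, the bound being vacuous when its right side is infinite. First I would treat the case in which $h$ is itself $C^2$: then $\phi:=-h$ satisfies $\nabla^2\phi(x)=-\nabla^2h(x)\ge\kappa I$ by $\kappa$-concavity, so $\big(\nabla^2\phi(x)\big)^{-1}\le\kappa^{-1}I$ in the semidefinite order and hence $\nabla\varphi^\top\big(\nabla^2\phi\big)^{-1}\nabla\varphi\le\kappa^{-1}|\nabla\varphi|^2$ pointwise; integrating against $R$ and applying Brascamp--Lieb yields the claim. (Alternatively, in the $C^2$ case one may linearize the log-Sobolev inequality of Theorem~\ref{th:logsobolev} about the constant density, applying it to $dQ/dR=1+\epsilon\varphi$ with $\varphi$ bounded and $R$-centered and letting $\epsilon\to0$.)

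To remove the $C^2$ hypothesis on $h$, I would approximate by mollifying its concave part. Write $h(x)=-\tfrac{\kappa}{2}|x|^2+g(x)$, so that $g$ is concave and real-valued on $\R^n$, hence continuous. Fix a smooth, symmetric, compactly supported mollifier $\rho_\delta$ and set $g_\delta:=g*\rho_\delta$. Then $g_\delta$ is concave, being an average of the concave functions $x\mapsto g(x-y)$; it is $C^\infty$; by Jensen's inequality and the symmetry of $\rho_\delta$ we have $g_\delta\le g$ pointwise; and $g_\delta\to g$ locally uniformly as $\delta\to0$. Thus $h_\delta:=-\tfrac{\kappa}{2}|\cdot|^2+g_\delta$ is $C^2$ and $\kappa$-concave with $e^{h_\delta}\le e^{h}$, so $Z_\delta:=\int_{\R^n}e^{h_\delta}\,dx\in(0,1]$ satisfies $Z_\delta\to\int e^h\,dx=1$ by dominated convergence, and $R_\delta:=Z_\delta^{-1}e^{h_\delta}dx$ is a probability measure with $C^2$ and $\kappa$-concave log-density satisfying $dR_\delta\le2\,dR$ for all small $\delta$. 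Applying the $C^2$ case to $R_\delta$ and sending $\delta\to0$ — passing each of $\int|\nabla\varphi|^2\,dR_\delta$, $\int\varphi^2\,dR_\delta$, $\int\varphi\,dR_\delta$ to the limit by dominated convergence against the respective Lebesgue-integrable majorants $2|\nabla\varphi|^2e^h$, $2\varphi^2e^h$, $2|\varphi|e^h$ — establishes the Poincar\'e inequality for $R$ for every bounded $C^1$ function $\varphi$.

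Finally I would drop the boundedness of $\varphi$ by truncation. The inequality is vacuous unless $|\nabla\varphi|^2\in L^1(R)$, so assume it; given a general $C^1$ function $\varphi\in L^1(R)$, apply the bounded case to $\chi_m(\varphi)$, where $\chi_m:\R\to\R$ is smooth, odd, $1$-Lipschitz, equals the identity on $[-m,m]$, satisfies $|\chi_m|\le|{\cdot}|$, and has $|\chi_m|$ nondecreasing in $m$ with $\chi_m(t)\to t$ for all $t$. Since $|\nabla\chi_m(\varphi)|\le|\nabla\varphi|$, we get $\Var_R(\chi_m(\varphi))\le\kappa^{-1}\int_{\R^n}|\nabla\varphi|^2\,dR$; letting $m\to\infty$ and using $\int\chi_m(\varphi)\,dR\to\int\varphi\,dR$ (dominated convergence) together with $\int\chi_m(\varphi)^2\,dR\uparrow\int\varphi^2\,dR$ (monotone convergence) yields $\Var_R(\varphi)\le\kappa^{-1}\int_{\R^n}|\nabla\varphi|^2\,dR$. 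The only delicate point is the mollification limit, where one must verify that the normalizing constants $Z_\delta$ converge and that a single $\delta$-independent, Lebesgue-integrable majorant dominates all the relevant integrands; everything else reduces to the one-line semidefinite bound $\big(\nabla^2\phi\big)^{-1}\le\kappa^{-1}I$ feeding into the cited Brascamp--Lieb inequality.
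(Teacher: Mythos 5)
Your proposal is correct and follows exactly the route the paper intends: it deduces the $C^2$ case from the cited Brascamp--Lieb inequality via the semidefinite bound $(\nabla^2\phi)^{-1}\le\kappa^{-1}I$, and then carries out in detail the two reductions the paper only mentions in passing — removing the $C^2$ hypothesis by mollification (you mollify the concave part with a compactly supported kernel rather than a Gaussian, an immaterial difference) and weakening $\varphi\in L^2(R)$ to $\varphi\in L^1(R)$ by monotone truncation. The details you supply (the Jensen bound $g_\delta\le g$ giving a uniform integrable majorant, and the monotone/dominated convergence passage in the truncation step) are all sound.
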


The above Poincar\'e inequality is normally stated with the additional assumptions that $h$ is $C^2$, which is easily removed by mollification by a Gaussian, and that $\varphi \in L^2(R)$, which can be weakened to $L^1(R)$ by monotone approximation, though both sides may be infinite.

We will also make use of the Gibbs variational principle, which is well known, but we give the proof as we need a non-standard form which is careful about edge cases. Recall our convention that $H(Q) :=\infty$ if $Q$ is not absolutely continuous or if $Q\log Q \notin L^1(\R^n)$.

\begin{theorem}[Gibbs variational principle] \label{th:gibbs}
Let $ f : \R^n \to \R \cup \{-\infty\} $ be measurable, bounded from above and such that $ Z \coloneqq \int_{\R^n} e^f \, dx \in (0, \infty)$. Define $P\in\P(\R^n)$ by $P(dx)=Z^{-1}e^{f(x)}\,dx$. Then 
	\begin{equation}
		\sup_{Q \in \P(\R^n) } \left( \int_{\R^n} f \,dQ - H(Q)\right) = \log Z \in (-\infty ,\infty), \label{gibbs-sup}
	\end{equation}
and the following are equivalent:
\begin{enumerate}
	\item $ H(P) < \infty $.
\item  The supremum in \eqref{gibbs-sup}
is attained uniquely by $P$.
\item There exists a maximizer in \eqref{gibbs-sup}.
\end{enumerate} 
\end{theorem}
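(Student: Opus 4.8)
The plan is to establish the identity \eqref{gibbs-sup} first and then prove the chain of equivalences, being careful about the edge cases where $H(Q)$ or the integral $\int f\,dQ$ may fail to be well-defined. First I would observe the basic identity: for any $Q\in\P(\R^n)$ with $H(Q)<\infty$ and $f\in L^1(Q)$, a direct computation gives $\int f\,dQ - H(Q) = \log Z - H(Q\,|\,P)$, exactly as in \eqref{entproj-id0}. Since $H(Q\,|\,P)\ge 0$ with equality iff $Q=P$, this immediately yields $\int f\,dQ - H(Q)\le \log Z$ for all such $Q$. The subtlety is to handle $Q$ for which $f\notin L^1(Q)$ or $H(Q)=\infty$: since $f$ is bounded above, $\int f\,dQ$ is always well-defined in $[-\infty,\infty)$, and one checks that $\int f\,dQ - H(Q)$ is well-defined in $[-\infty,\infty)$ (the only potential $\infty-\infty$ is excluded because $H(Q)\ge -\log Z + \int f\,dQ$ whenever $f^- \in L^1(Q)$, via Jensen applied to $e^{f}/(Q\cdot Z)$ — or more simply because $H(Q\,|\,P)\ge 0$ always). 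In all cases the bound $\int f\,dQ - H(Q)\le\log Z$ holds. For the reverse inequality in \eqref{gibbs-sup}, I would approximate: truncate $f$ from below by $f_M := f\vee(-M)$, let $P_M$ have density proportional to $e^{f_M}$, and note $H(P_M)<\infty$ since $f_M$ is bounded, so $\int f_M\,dP_M - H(P_M) = \log Z_M$ where $Z_M = \int e^{f_M}\,dx\downarrow Z$ by dominated/monotone convergence (here $Z<\infty$ is used, and $e^{f_M}\le e^{f}+e^{-M+1}\cdot\mathbf 1$ or similar). Then $\int f\,dP_M - H(P_M)\le \int f_M\,dP_M - H(P_M) = \log Z_M$; but also one shows $\liminf_M(\int f_M\,dP_M - H(P_M))\ge \log Z$ directly from $\log Z_M \to \log Z$, giving $\sup \ge \log Z$. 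Combined with the upper bound, this proves \eqref{gibbs-sup}.

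Next, for the equivalences. The implication (2)$\Rightarrow$(3) is trivial. For (3)$\Rightarrow$(1): if $Q$ attains the supremum, then $\int f\,dQ - H(Q) = \log Z$, which in particular forces $H(Q)<\infty$ (otherwise the left side is $-\infty < \log Z$ — recall $Z\in(0,\infty)$ so $\log Z$ is finite), and also $f\in L^1(Q)$; then the identity $\int f\,dQ - H(Q) = \log Z - H(Q\,|\,P)$ gives $H(Q\,|\,P)=0$, hence $Q=P$, hence $H(P)=H(Q)<\infty$. For (1)$\Rightarrow$(2): if $H(P)<\infty$, then since $f$ is bounded above $f^+\in L^1(P)$ automatically, and $f^-\in L^1(P)$ follows because $\int f^-\,dP < \infty$ is equivalent to $H(P) + \log Z + \int f^+ \, dP<\infty$ via $\log(dP/dx) = f - \log Z$; so $f\in L^1(P)$ and $\int f\,dP - H(P) = \log Z$, i.e. $P$ attains the supremum, and uniqueness follows from the strict positivity of $H(\cdot\,|\,P)$ off the diagonal in the identity above.

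The main obstacle is the bookkeeping around ill-defined integrals: one must verify throughout that $\int f\,dQ - H(Q)$ is unambiguously defined in $[-\infty,\infty)$ for every $Q$ (not just those with finite entropy), so that the supremum in \eqref{gibbs-sup} makes sense without side conditions, and that the truncation argument genuinely recovers $\log Z$ in the limit rather than something smaller. The cleanest way to manage this is to route everything through the relative-entropy identity and the inequality $H(Q\,|\,P)\ge 0$: whenever $f^-\in L^1(Q)$ one has $H(Q) \ge \int f\,dQ - \log Z$ so $H(Q)$ cannot be $-\infty$ while $\int f\,dQ$ is finite, and whenever $f^-\notin L^1(Q)$ one has $\int f\,dQ = -\infty$ and the expression is $-\infty$ regardless of $H(Q)\in(-\infty,\infty]$. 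I would state these observations as a short preliminary paragraph before the main argument to keep the proof clean.
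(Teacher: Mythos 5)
The overall architecture of your proof is the same as the paper's: route everything through the identity $\int f\,dQ - H(Q) = \log Z - H(Q\,|\,P)$, get the upper bound from $H(Q\,|\,P)\ge 0$, prove the reverse inequality by an approximation argument, and then read off the equivalences. Your bookkeeping for the ill-defined cases and your treatment of (1)$\Rightarrow$(2) and (3)$\Rightarrow$(1) are fine (indeed, under the paper's convention $H(Q)\in(-\infty,\infty]$ always, so the expression $\int f\,dQ - H(Q)$ is automatically well-defined in $[-\infty,\infty)$ once one notes $f$ is bounded above).

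However, your approximation argument for the lower bound $\sup \ge \log Z$ has a genuine gap. You truncate $f$ from below, setting $f_M = f\vee(-M)$ and $Z_M=\int_{\R^n} e^{f_M}\,dx$. But $e^{f_M}\ge e^{-M}>0$ everywhere, and the reference measure here is Lebesgue measure on all of $\R^n$, which is infinite; hence $Z_M \ge e^{-M}\cdot\mathrm{Leb}(\R^n) = \infty$ for every $M$. The measure $P_M$ is therefore not defined, and the claimed domination $e^{f_M}\le e^f + e^{-M+1}\cdot\mathbf 1$ does not help because the constant function is not integrable on $\R^n$. (Even granting a finite $Z_M$, the assertion that boundedness of $f_M$ forces $H(P_M)<\infty$ would need justification on an unbounded domain, since a bounded density need not have integrable $(P_M\log P_M)^-$.) The fix is to truncate in \emph{space} rather than in the value of $f$: the paper takes $Q_k = P\,1_{B_k}/P(B_k)$ for centered balls $B_k$, so that $Q_k$ has a bounded density (using that $f$ is bounded above) with compact support, whence $Q_k\log Q_k\in L^1(\R^n)$, and $H(Q_k\,|\,P) = -\log P(B_k)\to 0$, which gives $\inf\{H(Q\,|\,P): H(Q)<\infty\}=0$ and hence the reverse inequality. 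Your lower-truncation idea would be appropriate if the reference measure were a probability measure, but not for Lebesgue measure on $\R^n$.
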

\begin{proof} 
We first prove \eqref{gibbs-sup}. Since $ f $ is bounded from above, $ \int_{\R^n} f \, dQ \in [-\infty, \infty)$ is well-defined for all $ Q \in \Pprod(\R^n) $. 
We may thus restrict the supremum in \eqref{gibbs-sup} to those $Q$ with $H(Q) < \infty$. For $H(Q) < \infty$, we have the simple identity
\begin{equation}
\int_{\R^n} f \, dQ  - H(Q) = - H(Q \, |\, P) + \log Z.  \label{gibbs-identity}
\end{equation}
Therefore, 
\begin{equation*}
	\sup_{Q \in \P(\R^n) } \left( \int_{\R^n} f \,dQ - H(Q)\right)  = - \inf\Big\{H(Q\,|\,P) : Q \in \P(\R^n), \ H(Q) < \infty\Big\} + \log Z,  \label{gibbs-identity-2}
\end{equation*}
and it suffices to show that the infimum on the right-hand side is zero. We proceed by approximation. For each $k \in \N$, let $B_k \subset \R^n$ denote the centered ball of radius $k$, and define the probability density $Q_k = P1_{B_k}/P(B_k)$. Since $f$ is bounded from above, the density $Q_k$ is bounded and supported on the bounded set $B_k$. Thus $Q_k\log Q_k \in L^1(\R^n)$, or $H(Q_k) < \infty$, and we conclude that $H(Q\,|\,P) \le \liminf_k H(Q_k\,|\,P)$. Finally, since $P(B_k) \to 1$, 
\begin{align*}
	H(Q_k\,|\,P) = -\log P(B_k) \to 0.
\end{align*}
This proves the claim \eqref{gibbs-sup}.

Turning to the equivalence of (1--3), the implication (1) $ \Rightarrow $ (2) follows by taking $ Q = P $ in  \eqref{gibbs-identity}. The implication (2) $ \Rightarrow $ (3) is trivial. Lastly, for the implication (3) $ \Rightarrow $ (1), suppose $Q \in \P(\R^n)$ attains the supremum in \eqref{gibbs-sup}. We know from \eqref{gibbs-sup} that the supremum is not $ -\infty $, so $H(Q) < \infty$. Then, for any $R \in \P(\R^n)$ with $H(R) < \infty$, the identity \eqref{gibbs-identity} implies
\begin{align*}
	- H(R \, |\, P) + \log Z &= \int_{\R^n} f \, dR  - H(R) \le \int_{\R^n} f \, dQ  - H(Q) = - H(Q \, |\, P) + \log Z.
\end{align*}
Rearrange and minimize over $R$ to get
\begin{align*}
	H(Q\,|\,P) \le \inf\Big\{H(R\,|\,P) : R \in \P(\R^n), \ H(R) < \infty\Big\} = 0,
\end{align*}
where the last equality was shown just above while proving \eqref{gibbs-sup}. It follows that $H(Q\,|\,P)=0$, so $Q=P$, and $H(P)=H(Q)<\infty$. This completes the proof.

\end{proof}

\subsection{Proof of Theorem \ref{th:main-intro}} \label{subsec:proofs main and log-concave}
This section proves Theorem \ref{th:main-intro} in several parts, and we assume throughout that $f$ satisfies the assumptions therein.
Since $f$ is $C^2$ and $\kappa$-concave, 
\begin{align}
f(x) \le a - b|x|^2, \quad \text{ for all } x \in \R^n, \ \text{ where } a:=f(0)+\kappa^{-1} |\nabla f(0)|^2, \ b :=\kappa/4. \label{bound-f}
\end{align}
This implies that $Z:=\int_{\R^n}e^{f(x)}dx < \infty$, so $P(dx) = Z^{-1}e^{f(x)}dx$ is well defined. 
Moreover,  $f$ is bounded from above, so $\int_{\R^n} f\, dQ$ is well defined in $[-\infty,\infty)$ for every $Q \in \P(\R^n)$. 
Note lastly that $fe^f \in L^1(\R^n)$, or equivalently $H(P) < \infty$, which follows from the growth assumption on $|f|$ and the fact that the $\kappa$-log-concave measure $P$ satisfies $\int_{\R^n} e^{c|x|^2}P(dx) < \infty$ for each $c < \kappa/2$. (In fact, every absolutely continuous log-concave measure has finite entropy \cite[Theorem I.1]{bobkov2011entropy}.) We first establish some properties of the optimization and fixed point problems appearing in Theorem \ref{th:main-intro}.

\begin{lemma} \label{le:MFsup-finite}
	It holds that
	\begin{align}
		-\infty < \sup_{Q \in \Pprod(\R^n)} \left( \int_{\R^n} f\,dQ- H(Q)\right) < \infty,  \label{MFopt-pfsection}
	\end{align}
and any $Q^* \in \Pprod(\R^n)$ attaining the supremum satisfies $f \in L^1(Q^*)$. 
Also, equation \eqref{entproj-id1} is valid.
\end{lemma}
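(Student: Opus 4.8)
The plan is to prove the finiteness of the mean field supremum \eqref{MFopt-pfsection} via the Gibbs variational principle (Theorem \ref{th:gibbs}), and then to extract the integrability $f \in L^1(Q^*)$ and the identity \eqref{entproj-id1} as essentially free consequences of the entropy identity \eqref{gibbs-identity}. First I would observe that the supremum over product measures in \eqref{MFopt-pfsection} is bounded above by the supremum over \emph{all} probability measures, which by Theorem \ref{th:gibbs} (applicable since, by \eqref{bound-f}, $f$ is bounded above with $Z = \int e^f\,dx \in (0,\infty)$) equals $\log Z < \infty$; this gives the upper bound in \eqref{MFopt-pfsection}. For the lower bound, it suffices to exhibit a single product measure with finite value of the objective: I would take, for instance, a Gaussian $Q = \gamma_{0,\sigma^2 I} \in \Pprod(\R^n)$ with $\sigma^2$ small enough (say $\sigma^2 < 1/2c_2$, or even any fixed $\sigma^2$), for which $H(Q)$ is an explicit finite constant and $\int f\,dQ > -\infty$ because the growth bound $|f(x)| \le c_1 e^{c_2|x|^2}$ together with $\int e^{c_2|x|^2}\,\gamma_{0,\sigma^2 I}(dx) < \infty$ (choosing $\sigma^2 < 1/2c_2$) ensures $f \in L^1(Q)$. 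This establishes \eqref{MFopt-pfsection}.

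Next, for a maximizer $Q^* \in \Pprod(\R^n)$ of \eqref{MFopt-pfsection}: since the supremum is finite and $> -\infty$, the maximizing value $\int f\,dQ^* - H(Q^*)$ is a finite real number; because $\int f\,dQ^* \in [-\infty,\infty)$ is well-defined (as $f$ is bounded above) and $H(Q^*) \in (-\infty,\infty]$, finiteness of the difference forces both $\int f\,dQ^* > -\infty$ and $H(Q^*) < \infty$. In particular $f \in L^1(Q^*)$, since $f$ is already bounded above. Here I should note one subtlety: I want $H(Q^*)$ to be finite \emph{as a differential entropy} — the growth bound on $f$ and the upper bound \eqref{bound-f} give $f(x) \le a - b|x|^2$, so $\int f\,dQ^* - H(Q^*)$ finite and $\int f\,dQ^* \le a - b\int |x|^2\,dQ^*$ together even bound a second moment of $Q^*$, but for the lemma I only need $H(Q^*) < \infty$, which is immediate from the displayed reasoning.

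Finally, for \eqref{entproj-id1}: for any $Q \in \Pprod(\R^n)$ with $H(Q) < \infty$, the identity \eqref{entproj-id0}, i.e. $\log Z + \log\big(\text{const}\big)$ — more precisely $\log\int e^f\,dx - \int f\,dQ + H(Q) = H(Q\,|\,P)$ — holds (this is \eqref{gibbs-identity} rearranged, valid whenever $H(Q) < \infty$, which in particular makes $\int f\,dQ$ finite by the argument above since $Q$ integrates $f$ whenever its entropy and the objective are controlled — more carefully, $H(Q\,|\,P) \ge 0$ and the identity shows $\int f\,dQ = \log Z + H(Q) - H(Q\,|\,P)$ could a priori be $-\infty$, but then the objective $\int f\,dQ - H(Q)$ is also $-\infty$ and such $Q$ are irrelevant to the supremum). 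Taking the supremum over $Q \in \Pprod(\R^n)$ in \eqref{entproj-id0} and using that the left side is $\log Z$ minus the objective, while the right side is $H(Q\,|\,P)$, yields $\log Z - \sup_Q(\int f\,dQ - H(Q)) = \inf_Q H(Q\,|\,P)$, which is \eqref{entproj-id1}; the restriction to $H(Q)<\infty$ on both sides is harmless since $Q$ with $H(Q)=\infty$ contribute $-\infty$ to the left supremum and $+\infty$ (or are simply not better than the finite infimum) on the right.

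I expect the main obstacle to be bookkeeping around the edge cases: ensuring that the manipulations $\sup$ and the rearrangement of \eqref{entproj-id0} are legitimate even when $\int f\,dQ = -\infty$ or $H(Q) = +\infty$ for some competitors $Q$, and confirming that the maximizer (whose existence in $\Pprod$ is guaranteed by weak compactness of sublevel sets of $H(\cdot\,|\,P)$, to be invoked elsewhere) genuinely has $f \in L^1(Q^*)$ rather than merely $\int f\,dQ^* > -\infty$ — but since $f$ is bounded above, these two are the same. The growth hypothesis $|f(x)| \le c_1 e^{c_2|x|^2}$ with $c_2 < \kappa/2$ enters only to guarantee a good test measure $Q$ (any sufficiently concentrated Gaussian works) so that the lower bound in \eqref{MFopt-pfsection} is strictly greater than $-\infty$.
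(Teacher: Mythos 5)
Your treatment of the first two claims is correct and matches the paper's route: the upper bound in \eqref{MFopt-pfsection} via Theorem \ref{th:gibbs}, a single explicit product test measure for the lower bound (the paper uses the uniform measure on $[0,1]^n$ together with local boundedness of the concave $f$, rather than a concentrated Gaussian, but either works), and the observation that a finite optimal value forces $H(Q^*)<\infty$ and $\int f\,dQ^*>-\infty$, the latter giving $f\in L^1(Q^*)$ since $f$ is bounded above.

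There is, however, a genuine gap in your proof of \eqref{entproj-id1}, at exactly the point you wave away in the last parenthetical. Restricting to $Q$ with $H(Q)<\infty$ is indeed harmless on the supremum side, since such $Q$ contribute $-\infty$ to the objective. But on the infimum side it is \emph{not} true that a product measure $Q$ with $H(Q)=+\infty$ (by the paper's convention this includes the case where $(Q\log Q)^-$ fails to be integrable) must have $H(Q\,|\,P)=+\infty$: differential entropy and relative entropy to $P$ are different functionals, and the negative part of $\log Q$ can be compensated by $-f$ in the tails. So a priori the unrestricted infimum $\inf_{Q\in\Pprod(\R^n)}H(Q\,|\,P)$ could be strictly smaller than $\inf\{H(Q\,|\,P):Q\in\Pprod(\R^n),\,H(Q)<\infty\}$, which is what your supremum manipulation actually produces; the assertion that such $Q$ ``are simply not better than the finite infimum'' is precisely what must be proved. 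The paper closes this by an approximation argument: given $Q\in\Pprod(\R^n)$ with $H(Q\,|\,P)<\infty$ and $H(Q)=\infty$, it sets $Q_k=Q1_{B_k}/Q(B_k)$ on boxes $B_k=[-k,k]^n$, checks that $H(Q_k\,|\,P)=\frac{1}{Q(B_k)}\int_{B_k}\log\frac{dQ}{dP}\,dQ-\log Q(B_k)$ is finite and converges to $H(Q\,|\,P)$, and then verifies $H(Q_k)<\infty$ by writing $\log Q_k=\log\frac{dQ_k}{dP}+f-\log Z$ and using that $f$ is locally bounded and $Q_k$ compactly supported. You would need this (or an alternative argument, e.g.\ showing directly that $H(Q\,|\,P)<\infty$ forces a finite second moment and hence integrability of $(Q\log Q)^-$) to legitimately pass to the unrestricted infimum in \eqref{entproj-id1}.
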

\begin{proof}
The Gibbs variational formula (Theorem \ref{th:gibbs}) implies that the supremum in \eqref{MFopt-pfsection} is no greater than $\log Z < \infty$. To see that it is not $-\infty$, note that $f$ is locally bounded because it is concave and real-valued. Hence, if $Q$ is any product measure with bounded support and finite entropy (such as the uniform measure on $[0,1]^n$), we can bound the supremum from below by $\int_{\R^n} f \, dQ- H(Q) > -\infty$. 
Now, if $Q^*$ is an optimizer, then $H(Q^*) < \infty$ and $\int_{\R^n} f dQ^* > -\infty$, the latter implying that $f \in L^1(Q^*)$ since $f$ is bounded from above.

To prove \eqref{entproj-id1}, note that the simple calculation \eqref{entproj-id0} is valid for any $Q \in \P(\R^n)$ with $H(Q) < \infty$, though both sides are $+\infty$ if and only if $\int_{\R^n} f\,dQ=-\infty$. Since $\int_{\R^n}f\,dQ$ always exists in $[-\infty,\infty)$, the supremum in \eqref{MFopt-pfsection} remains the same when restricted to those $Q$ with $H(Q) < \infty$. By infimizing \eqref{entproj-id0} over $Q \in \Pprod(\R^n)$ with finite entropy, we deduce that the left-hand side of \eqref{entproj-id1} is finite and equals $\inf\{H(Q\,|\,P) : Q \in \Pprod(\R^n), \, H(Q) < \infty\}$. To complete the proof, we claim that if $Q \in \Pprod(\R^n)$ satisfies $H(Q\,|\,P)<\infty$ and $H(Q)=\infty$, then there exists $Q_k \in \Pprod(\R^n)$ such that $H(Q_k) < \infty$ for each $k$ and $H(Q_k\,|\,P) \to H(Q\,|\,P)$. Indeed, define the probability density $Q_k=Q1_{B_k}/Q(B_k)$, where $B_k = [-k,k]^n$, for $k$ large enough that $Q(B_k) > 0$. Then 
\begin{align*}
H(Q_k\,|\,P) = \frac{1}{Q(B_k)}\int_{B_k} \log\frac{dQ}{dP}\,dQ - \log Q(B_k)
\end{align*}
is finite and converges to $H(Q\,|\,P)$ as $k\to\infty$. In particular, $\log (dQ_k/dP) \in L^1(Q_k)$. We also have  $\log P = f - \log Z \in L^1(Q_k)$ because $f$ is locally bounded and $Q_k$ has compact support. We deduce that $\log Q_k \in L^1(Q_k)$, or $H(Q_k) < \infty$, which completes the proof.

\end{proof}

The following proposition shows essentially that the fixed point problem \eqref{fixedpoint-mainthm} is the first order condition for optimality in \eqref{MFopt-mainthm}. This extends naturally to much more general settings, with $(\R^n,dx)$ replaced by a general $\sigma$-finite product measure space, but we will not need this.

\begin{proposition}[Optimality to fixed point] \label{pr:optimizer-to-fp}
	Suppose $Q^*=Q^*_1\times\cdots\times Q^*_n \in \Pprod(\R^n)$ attains the supremum in \eqref{MFopt-pfsection}.
	Then $ f \in L^1(Q^*) $ and $Q^*$ satisfies the fixed point equation
		\begin{align}
		\begin{split}
	Q_i(dx_i) &= Z_i^{-1} e^{\hat{f}_i(x_i)}\,dx_i, \qquad  \text{where } \hat{f}_i : \R \to \R \cup \{-\infty\} \text{ is defined by } \\
	\hat{f}_i(x_i) &:=  \int_{\R^{n-1}} f(x_1,\ldots,x_n) \, \prod_{j \neq i} Q^*_j(dx_j), \qquad i\in [n].
	\end{split} \label{fixedpoint-pfsection}
\end{align}
	\begin{proof}
		Note that $f \in L^1(Q^*)$ by Lemma \ref{le:MFsup-finite}.
		By assumption, $(Q^*_1,\ldots,Q^*_n)$ attains the supremum
		\begin{align*}
			\sup_{Q _1,\ldots,Q_n\in \P(\R) } \left( \int_{\R^n} f\,d(Q_1 \times \cdots \times Q_n) - H(Q_1 \times \cdots \times Q_n)\right).
		\end{align*}
		Clearly, $ \hat{f}_i(x_i)  = \E_{Q^*}[f(X)\,|\,X_i=x_i]$ for $ Q^*_i$-a.e.\ $ x_i \in \R $. Also,  it is well known that entropy tensorizes for product measures: $H(Q_1 \times \cdots \times Q_n) = \sum_{i=1}^nH(Q_i)$.
		From these and the tower property	it follows for each $i \in [n]$ that $Q^*_i$ attains the supremum
		\begin{align}
			S_i := \sup_{Q_i \in \P(\R) } \left( \int_{\R} \hat{f}_i\,dQ_i - H(Q_i)\right). \label{eq: S_i sup}
		\end{align}
We wish to invoke the Gibbs variational principle (Theorem \ref{th:gibbs}) to deduce that this supremum is uniquely attained by the probability measure with density proportional to $e^{\hat{f}_i}$, and thus $Q^*_i(dx_i) = Z_i^{-1}e^{\hat{f}_i(x_i)}\,dx_i$, which yields \eqref{fixedpoint-pfsection}.
It remains to carefully check the conditions of Theorem \ref{th:gibbs}.
We know that $Q^*_i$ attains the supremum \eqref{eq: S_i sup}, so we must just check that $Z_i \in (0,\infty)$.
Note that \eqref{bound-f} implies $f(x) \le a - b x_i^2$ for all $x \in \R^n$, and thus $\hat{f}_i(x_i) \le a - b x_i^2$ for all $x_i \in \R$,
which implies $ Z_i  = \int_\R e^{\hat{f}_i(x_i)} \, dx_i < \infty$. Next, recall from Lemma \ref{le:MFsup-finite} that $ f \in L^1(Q^*) $, so by Fubini's theorem, $Q_i^*(|\hat{f}_i| <\infty)=1$. Note that $Q^*_i$ is absolutely continuous since $H(Q_i^*) <\infty$. Hence, $\{|\hat{f}_i| < \infty\}$ has nonzero Lebesgue measure, and so $Z_i > 0$. 	

\end{proof}
\end{proposition}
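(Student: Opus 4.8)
The plan is to reduce the $n$-dimensional optimality of $Q^*$ to $n$ one-dimensional optimality statements, one per coordinate, and then to invoke the one-dimensional Gibbs variational principle (Theorem \ref{th:gibbs}). That $f \in L^1(Q^*)$ is immediate: Lemma \ref{le:MFsup-finite} already records that any maximizer of \eqref{MFopt-pfsection} has $f \in L^1$. Since $f$ is bounded above by \eqref{bound-f}, the integral defining $\hat{f}_i(x_i)$ makes sense as an element of $[-\infty,\infty)$ for every $x_i$, and Fubini's theorem (using $f \in L^1(Q^*)$) identifies it with the conditional expectation $\E_{Q^*}[f(X)\,|\,X_i=x_i]$ for $Q^*_i$-a.e.\ $x_i$.

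First I would fix a coordinate $i$ and hold $Q_j = Q^*_j$ for $j \neq i$, letting only $Q_i$ vary. Using the tensorization of entropy, $H(Q^*_1 \times \cdots \times Q_i \times \cdots \times Q^*_n) = H(Q_i) + \sum_{j \neq i} H(Q^*_j)$, together with Fubini's theorem to write $\int_{\R^n} f\,d(Q^*_1 \times \cdots \times Q_i \times \cdots \times Q^*_n) = \int_{\R} \hat{f}_i\,dQ_i$ for $Q_i$ of finite entropy, the global optimality of $Q^*$ forces $Q^*_i$ to attain $\sup_{Q_i \in \P(\R)}\big(\int_\R \hat{f}_i\,dQ_i - H(Q_i)\big)$.

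Then I would apply Theorem \ref{th:gibbs} to $\hat{f}_i$ on $\R$, checking its hypotheses. Measurability is clear, and \eqref{bound-f} gives $f(x) \le a - b x_i^2$, hence $\hat{f}_i(x_i) \le a - b x_i^2$, which simultaneously shows $\hat{f}_i$ is bounded above and that $Z_i = \int_\R e^{\hat{f}_i(x_i)}\,dx_i < \infty$. For $Z_i > 0$, i.e.\ $\hat{f}_i \not\equiv -\infty$: from $f \in L^1(Q^*)$ and Fubini, $\hat{f}_i(x_i) > -\infty$ for $Q^*_i$-a.e.\ $x_i$, and since $H(Q^*_i) < \infty$ forces $Q^*_i$ to be absolutely continuous, the set $\{\hat{f}_i > -\infty\}$ has positive Lebesgue measure, so $Z_i > 0$. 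With $Z_i \in (0,\infty)$, Theorem \ref{th:gibbs} applies: the existence of the maximizer $Q^*_i$ of the one-dimensional problem forces $Q^*_i(dx_i) = Z_i^{-1}e^{\hat{f}_i(x_i)}\,dx_i$, which is \eqref{fixedpoint-pfsection}.

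The substance here is not a deep idea but the careful bookkeeping of edge cases: keeping the possibly $-\infty$-valued $\hat{f}_i$ under control, justifying the Fubini and tensorization reduction assuming only $f \in L^1(Q^*)$, and pinning down $0 < Z_i < \infty$ so that the one-dimensional Gibbs principle can be invoked in precisely the non-standard form provided by Theorem \ref{th:gibbs}. That is the step I expect to require the most care.
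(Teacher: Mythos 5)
Your proposal is correct and follows essentially the same route as the paper: reduce to the one-coordinate problem by freezing $Q_j=Q^*_j$ for $j\neq i$ via tensorization of entropy and Fubini, then invoke the non-standard Gibbs variational principle of Theorem \ref{th:gibbs} after verifying $Z_i\in(0,\infty)$ exactly as you describe. No substantive differences to report.
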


\begin{lemma} \label{lem: maximizer unique}
There exists a unique  maximizer in \eqref{MFopt-pfsection}. 
\begin{proof}
We first prove existence. Recalling the identity \eqref{entproj-id1}, the optimizers of \eqref{MFopt-pfsection} are in one-to-one correspondence with the optimizers of $\inf_{Q \in \Pprod(\R^n)} H(Q\,|\,P)$. The latter exist because $\Pprod(\R^n)$ is a weakly closed subset of $\P(\R^n)$ and because $H(\cdot\,|\,P)$ has weakly compact sub-level sets.

		We next prove uniqueness. Let $Q^*=Q^*_1\times \cdots \times Q^*_n\in\Pprod(\R^n)$ denote any optimizer of \eqref{MFapprox-mainthm}.
		Define $G_1, G_2: (\P(\R))^n \to \R $ by 
		\begin{align*}
			G_1(Q_1, \dots, Q_n) \coloneqq \int_{\R^n} f(x_1, \dots, x_n) \prod_{i=1}^{n} Q_i(dx_i), \quad  G_2(Q_1, \dots, Q_n) \coloneqq H(Q_1 \times \cdots \times Q_n).
		\end{align*}
		That is, $Q^*$ is a maximizer of $G=G_1-G_2$, and we will show it must be the only one.
		Let $ Q  \in \Pprod(\R^n) $ be distinct from $Q^*$. We denote by $ M(t) = (M_1(t), \dots, M_n(t)) $ the displacement interpolations between the marginals, i.e.,
		\begin{align*}
			M_i(t) = Q^*_i \circ ((1-t) \mathrm{Id} + t T_i)^{-1}, 
		\end{align*}
		where $T_i : \R \to \R $ is the $ Q_i^* $-a.s.\ unique nondecreasing function satisfying  $ Q^*_i \circ T_i^{-1} = Q_i$. Since $Q^*_i$ and $Q_i$ are distinct, there exists $i$ such that $T_i$ is different from the identity map on a set with strictly positive $ Q^\ast_i$-measure.
		Writing out the expression of $G_1$,
		\begin{align*}
			G_1(M(t)) 
			&= \int_{\R^n} f\Big((1-t)x_1 + t T_1(x_1), \dots, (1-t)x_n + t T_n(x_n)\Big) \prod_{i=1}^{n} Q^*_i(dx_i),
		\end{align*}
		we see that $t \mapsto G_1(M(t))$ is strictly concave because $ f $ is strictly concave and $ Q \neq Q^*$. 
		Tensorization of entropy yields $ G_2(M(t))=\sum_{i=1}^nH(M_i(t))$, and it is well known that differential entropy is displacement convex \cite[Theorem 5.15(i)]{villani2003topics}. That is, $t\mapsto H(M_i(t))$ is convex for each $i$. We deduce that $t \mapsto G(M(t))$  is strictly concave. This proves uniqueness: if $Q$ were also an optimizer, then $G(M(1))=G(Q)=G(Q^*)=G(M(0))$ would imply $G(M(t)) > G(Q^*)$ for some $t \in (0,1)$.
\end{proof}
\end{lemma}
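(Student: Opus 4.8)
The plan is to treat the two assertions separately. \textbf{Existence} is the easy half: by the identity \eqref{entproj-id1}, a measure maximizes \eqref{MFopt-pfsection} if and only if it minimizes $H(\cdot\,|\,P)$ over $\Pprod(\R^n)$, and this infimum is finite by Lemma \ref{le:MFsup-finite}. Since $\Pprod(\R^n)$ is closed under weak convergence (a weak limit of product measures again has product form, as one sees by testing against tensorized bounded continuous functions) and $H(\cdot\,|\,P)$ is weakly lower semicontinuous with weakly compact sublevel sets, the infimum over the closed set $\Pprod(\R^n)$ is attained. Any such minimizer has finite entropy and satisfies $f \in L^1$, once more by Lemma \ref{le:MFsup-finite}, so it is a genuine maximizer of \eqref{MFopt-pfsection}.

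\textbf{Uniqueness} is the crux, and the obstacle is that $\Pprod(\R^n)$ is \emph{not convex}: the line segment between two product measures is generally not a product measure, so strict concavity of $Q \mapsto \int f\,dQ - H(Q)$ along ordinary segments is both false and useless here. The remedy, in the spirit of McCann \cite{mccann1997convexity}, is to interpolate coordinatewise by optimal transport, which does keep us inside $\Pprod(\R^n)$. Given two maximizers $Q^* = Q^*_1\times\cdots\times Q^*_n$ and $Q = Q_1\times\cdots\times Q_n$, for each $i$ take the ($Q^*_i$-a.s.\ unique) nondecreasing $T_i$ with $(T_i)_\#Q^*_i = Q_i$, set $M_i(t) := \big((1-t)\mathrm{Id}+tT_i\big)_\#Q^*_i$, and $M(t) := M_1(t)\times\cdots\times M_n(t) \in \Pprod(\R^n)$. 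I would then show that $t\mapsto G(M(t))$, with $G(Q) := \int f\,dQ - H(Q)$, is \emph{strictly} concave on $[0,1]$. The entropy part $t\mapsto H(M(t)) = \sum_i H(M_i(t))$ is convex by tensorization of differential entropy together with McCann's displacement convexity of entropy \cite[Theorem 5.15]{villani2003topics}. For the linear part,
\[
\int_{\R^n} f\big((1-t)x_1+tT_1(x_1),\ldots,(1-t)x_n+tT_n(x_n)\big)\prod_{j=1}^n Q^*_j(dx_j),
\]
the integrand $t\mapsto f\big((1-t)x+tTx\big)$, where $Tx := (T_1(x_1),\ldots,T_n(x_n))$, is concave (the strictly concave $f$ composed with an affine path) and \emph{strictly} concave whenever $Tx \neq x$; since $Q\neq Q^*$ forces some $T_i \neq \mathrm{Id}$ on a set of positive $Q^*_i$-measure, the set $\{x : Tx \neq x\}$ has positive $\prod_j Q^*_j$-measure, so integration preserves the strictness. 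With $t\mapsto G(M(t))$ strictly concave and $G(M(0)) = G(Q^*) = G(Q) = G(M(1))$ both equal to the (finite) supremum, one gets $G(M(1/2)) > G(Q^*)$ while $M(1/2)\in\Pprod(\R^n)$, contradicting optimality; hence $Q = Q^*$.

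The step I expect to demand the most care is not the convexity itself but the attendant \textbf{finiteness bookkeeping}, needed so that $G(M(t))$ is a well-defined real number for $t\in(0,1)$ and the concavity comparison is legitimate. One must check $H(M_i(t)) < \infty$, which follows from $H(Q^*_i), H(Q_i) < \infty$ and $H(M_i(t)) \le (1-t)H(Q^*_i)+tH(Q_i)$ (displacement convexity again), and that $\int f\,dM(t)$ stays in $(-\infty,\infty)$, which follows since $f$ is bounded above by \eqref{bound-f} while concavity of $t\mapsto\int f\,dM(t)$ bounds it below by its endpoint values. One should also record that each $M_i(t)$ remains absolutely continuous for $t\in[0,1)$, and at $t=1$ as well since $Q_i$ is itself a maximizer hence a.c.; this is part of the standard displacement-interpolation machinery on $\R$ and is exactly what keeps the entropies from being $+\infty$.
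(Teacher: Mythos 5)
Your proposal is correct and follows essentially the same route as the paper: existence via weak closedness of $\Pprod(\R^n)$ and weak compactness of the sub-level sets of $H(\cdot\,|\,P)$, and uniqueness via coordinatewise monotone (displacement) interpolation, with strict concavity of $t\mapsto\int f\,dM(t)$ coming from the strict concavity of $f$ on the positive-measure set where the transport maps move points, and convexity of $t\mapsto H(M(t))$ from tensorization plus McCann's displacement convexity. The extra finiteness and absolute-continuity bookkeeping you flag is a reasonable addition that the paper leaves implicit.
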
	

\begin{remark}
We do not expect uniqueness in Lemma \ref{lem: maximizer unique} to hold under mere concavity of $f$.
The challenge is that the differential entropy functional is displacement convex, but not strictly so..
\end{remark}

In some of the following proofs, some shorthand notation will be useful. For $Q \in \P(\R^n)$, let us write $Q_{-i}$ for the marginal of $(X_j)_{j \neq i}$ under $Q$. For $x \in \R^n$ let us write $x_{-i}=(x_j)_{j \neq i}$ and, with some abuse of notation, $f(x)=f(x_i,x_{-i})$. 

\begin{lemma} \label{lem: optimizer log-concave}
	 If $Q \in \Pprod(\R^n)$ satisfies the fixed point equation \eqref{fixedpoint-pfsection}, then $Q$ is $\kappa$-log-concave.
\end{lemma}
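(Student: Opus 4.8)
The plan is to show directly from the fixed point equation \eqref{fixedpoint-pfsection} that each marginal density $Q_i(x_i) = Z_i^{-1}e^{\hat f_i(x_i)}$ is $\kappa$-log-concave on $\R$; since $Q = \prod_i Q_i$ is a product measure, $\log Q(x) = \sum_i \hat f_i(x_i) + \text{const}$, and a product of one-dimensional $\kappa$-concave functions of separate variables has Hessian $\mathrm{diag}(\hat f_1'',\dots,\hat f_n'') \le -\kappa I$, which is exactly $\kappa$-log-concavity of $Q$. So the whole lemma reduces to the claim that each $\hat f_i$ is $\kappa$-concave, i.e.\ $\hat f_i'' \le -\kappa$.

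Recall $\hat f_i(x_i) = \int_{\R^{n-1}} f(x_i, x_{-i}) \prod_{j\neq i} Q_j^*(dx_j) = \E_{Q_{-i}^*}[f(x_i, X_{-i})]$. First I would justify that $\hat f_i$ is $C^2$ with derivatives obtained by differentiating under the integral sign: $\hat f_i'(x_i) = \E_{Q_{-i}^*}[\partial_i f(x_i,X_{-i})]$ and $\hat f_i''(x_i) = \E_{Q_{-i}^*}[\partial_{ii} f(x_i,X_{-i})]$. This requires a dominated-convergence / local-uniform-integrability argument using the growth bound $|f(x)| \le c_1 e^{c_2|x|^2}$ on $f$ (hence on $\partial_i f$, $\partial_{ii}f$ via $C^2$ estimates on concave functions, e.g.\ mean-value bounds) together with the fact that $Q_{-i}^*$ has Gaussian-type tails — but here there is a subtlety: at this stage of the proof we only know $Q_j^*$ solves \eqref{fixedpoint-pfsection}, not yet that it is $\kappa$-log-concave (that is what we are proving), so to control tails I would use the a priori bound $\hat f_j(x_j) \le a - b x_j^2$ established in the proof of Proposition \ref{pr:optimizer-to-fp} (from \eqref{bound-f}), which already forces each $Q_j^*$ to have sub-Gaussian tails with a uniform constant. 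That is enough to differentiate twice under the integral. Then, since $f$ is $\kappa$-concave, $\partial_{ii}f(x) \le -\kappa$ pointwise, so $\hat f_i''(x_i) = \E_{Q_{-i}^*}[\partial_{ii}f(x_i,X_{-i})] \le -\kappa$, giving $\kappa$-concavity of $\hat f_i$.

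The only real obstacle is the regularity/differentiation-under-the-integral step: one must be careful that $\partial_i f$ and $\partial_{ii} f$ are dominated, locally in $x_i$, by a $Q_{-i}^*$-integrable function. For $\partial_{ii}f$ this is clean because concavity gives the two-sided control $-\kappa \ge \partial_{ii}f(x)$, but $\partial_{ii}f$ could still be very negative (unbounded below), so I would dominate $|\partial_{ii}f(x_i,x_{-i})|$ using the assumed exponential bound on $|f|$ together with a Hessian-from-values estimate for $C^2$ $\kappa$-concave functions (bounding $|\nabla^2 f|$ on a ball in terms of $\sup$ of $|f|$ on a slightly larger ball, plus the lower bound $-\kappa I$), and then use that $Q_{-i}^*$ integrates $e^{c_2'|x_{-i}|^2}$ for the relevant $c_2' < \kappa/2$ by the tail bound $\hat f_j(x_j)\le a - bx_j^2$. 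Alternatively, and perhaps more cleanly, one can bypass pointwise $C^2$ arguments: write $\hat f_i$ as an average of the concave functions $x_i \mapsto f(x_i, x_{-i})$ over $x_{-i}$, note each is $\kappa$-concave, and use that a pointwise average (integral) of $\kappa$-concave functions is $\kappa$-concave — i.e.\ $\hat f_i(x_i) + \tfrac{\kappa}{2}x_i^2 = \E_{Q_{-i}^*}[f(x_i,X_{-i}) + \tfrac{\kappa}{2}x_i^2]$ is an average of concave functions of $x_i$, hence concave — which needs only that the integral defining $\hat f_i$ is finite (already known, since $f \in L^1(Q^*)$ and the one-variable slices inherit integrability via Fubini and the $a - bx_i^2$ upper bound). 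This avoids all differentiation issues and is the route I would actually take; the $C^2$ formulas for $\hat f_i''$ are then a bonus, not a necessity.
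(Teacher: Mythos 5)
Your final route --- writing $\hat f_i(x_i) + \tfrac{\kappa}{2}x_i^2$ as an integral over $x_{-i}$ of the concave functions $x_i \mapsto f(x_i,x_{-i}) + \tfrac{\kappa}{2}x_i^2$ and concluding $\kappa$-concavity of each $\hat f_i$, hence of $\log Q$ --- is exactly the paper's proof. The differentiation-under-the-integral machinery you describe first is unnecessary, as you yourself correctly conclude.
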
		
\begin{proof}
By \eqref{fixedpoint-pfsection}, the density of $Q$ is proportional to $e^{\hat{F}}$, where $\hat{F}(x)=\sum_{i=1}^n \hat{f}_i(x_i)$ and $ \hat{f}_i $ is given by \eqref{fixedpoint-pfsection}.  By the $ \kappa $-concavity of $ f $,  for any $ y, z \in \R $ and $t\in [0,1] $,  we have
	\begin{align*}
		\hat{f}_i &(tz + (1- t) y ) + \frac{\kappa}{2}(tz + (1-t)y)^2 \\
		&= \int_{\R^{n-1}} \Big[f(tz + (1 - t) y, x_{-i})+ \frac{\kappa}{2}(tz + (1-t)y)^2\Big] Q_{-i} (x_{-i})dx_{-i} \\
		&\ge \int_{\R^{n-1}} \Big[tf(z, x_{-i})+ t\frac{\kappa}{2}z^2 + (1-t)f(y, x_{-i}) + (1-t)\frac{\kappa}{2}y^2\Big] Q_{-i} (x_{-i})dx_{-i}  \\
		&= t\hat{f}_i(z) + t\frac{\kappa}{2}z^2 + (1-t)\hat{f}_i(y)  + (1-t)\frac{\kappa}{2}y^2.
	\end{align*}
	This shows that $\hat{f}_i$ is $\kappa$-concave, and thus so is $\hat{F}$.
\end{proof}

The next proposition, in conjunction with Proposition \ref{pr:optimizer-to-fp}, shows that the optimizers of \eqref{MFopt-pfsection} and the solutions of the fixed point problem \eqref{fixedpoint-mainthm} are exactly the same.

\begin{proposition}[Fixed point to optimality] \label{pr:fp-to-optimizer}
	 Let $ Q \in \Pprod(\R^n)$ satisfy $ f \in L^1(Q) $ and the fixed point problem \eqref{fixedpoint-pfsection}.
Then $Q$ has strictly positive density a.e.\ and is a maximizer of \eqref{MFopt-pfsection}.
\end{proposition}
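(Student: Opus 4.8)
The plan is to show that a fixed point $Q$ is automatically the entropic projection, exploiting the variational identity \eqref{entproj-id1} together with the log-concavity furnished by Lemma \ref{lem: optimizer log-concave}. First I would establish strict positivity of the density: since $Q$ satisfies \eqref{fixedpoint-pfsection}, $Q_i(dx_i) = Z_i^{-1}e^{\hat f_i(x_i)}dx_i$ with $\hat f_i$ finite $Q_i$-a.e., and I must promote this to $\hat f_i(x_i) > -\infty$ for \emph{Lebesgue}-a.e.\ $x_i$. Here is where the growth hypothesis $|f(x)| \le c_1 e^{c_2|x|^2}$ with $c_2 < \kappa/2$ enters: by Lemma \ref{lem: optimizer log-concave}, $Q$ is $\kappa$-log-concave, hence each $Q_{-i} = \prod_{j\neq i}Q_j$ has finite exponential moments $\int e^{c|x_{-i}|^2}Q_{-i}(dx_{-i}) < \infty$ for $c < \kappa/2$, so the integral defining $\hat f_i(x_i) = \int f(x_i, x_{-i})\,Q_{-i}(dx_{-i})$ converges absolutely for \emph{every} $x_i \in \R$ (dominating $|f(x_i,x_{-i})|$ by $c_1 e^{c_2|x_i|^2}e^{c_2|x_{-i}|^2}$ and using $c_2 < \kappa/2$). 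Thus $\hat f_i$ is everywhere finite, $e^{\hat f_i} > 0$ everywhere, and $Q$ has strictly positive density a.e.

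Next I would show $Q$ is a maximizer of \eqref{MFopt-pfsection}. By \eqref{entproj-id1} and Lemma \ref{le:MFsup-finite}, it suffices to show $H(Q\,|\,P) = \inf_{Q' \in \Pprod(\R^n)} H(Q'\,|\,P)$. Take any competitor $Q' = Q'_1 \times \cdots \times Q'_n \in \Pprod(\R^n)$ with $H(Q'\,|\,P) < \infty$; I want $H(Q\,|\,P) \le H(Q'\,|\,P)$. The key computation is that, because $Q$ is a product measure satisfying the fixed point equation, $\log \frac{dQ}{dP}(x) = \sum_i \hat f_i(x_i) - f(x) + \log Z - \sum_i \log Z_i$, so
\begin{align*}
	H(Q'\,|\,P) - H(Q'\,|\,Q) = \int_{\R^n} \log\frac{dQ}{dP}\,dQ' = \sum_{i=1}^n \int_{\R} \hat f_i\,dQ'_i - \int_{\R^n} f\,dQ' + \log Z - \sum_{i=1}^n \log Z_i,
\end{align*}
provided all these integrals are well-defined — this is the technical crux and I would handle integrability of $\hat f_i$ against $Q'_i$ and of $f$ against $Q'$ using the at-most-quadratic upper bound \eqref{bound-f} on $f$ (which bounds the positive parts) together with $H(Q'\,|\,P) < \infty$ forcing finite quadratic moments on $Q'$ (since $P$ is $\kappa$-log-concave, apply the entropy inequality / Donsker–Varadhan to control $\int |x|^2 dQ'$). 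Crucially, $\int \hat f_i\,dQ'_i = \int_{\R^n} f(x_i, x_{-i})\,Q'_i(dx_i)Q_{-i}(dx_{-i})$ by Fubini, so $\sum_i \int \hat f_i\,dQ'_i$ is a sum of ``$f$ integrated against a product that agrees with $Q'$ in coordinate $i$ and with $Q$ elsewhere.''

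The final step is a telescoping/convexity argument comparing $\sum_i \int \hat f_i\,dQ'_i - \int f\,dQ'$ for a general $Q'$ against the same expression for $Q' = Q$. When $Q' = Q$ we get $\sum_i \int \hat f_i\,dQ_i - \int f\,dQ = (n-1)\int f\,dQ \cdot$ — more precisely, $\int \hat f_i \, dQ_i = \int f\, dQ$ for each $i$, so the bracketed constant is $n\int f\,dQ - \int f\,dQ + \log Z - \sum\log Z_i = (n-1)\int f\,dQ + \log Z - \sum \log Z_i$, and indeed this equals $-H(Q\,|\,Q) + H(Q\,|\,P) = H(Q\,|\,P)$ by the same identity with $Q'=Q$. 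So the desired inequality $H(Q\,|\,P) \le H(Q'\,|\,P)$ reduces, after subtracting $H(Q'\,|\,Q) \ge 0$, to showing
\begin{align*}
	\sum_{i=1}^n \int_{\R} \hat f_i\,dQ'_i - \int_{\R^n} f\,dQ' \le \sum_{i=1}^n \int_{\R} \hat f_i\,dQ_i - \int_{\R^n} f\,dQ = (n-1)\int_{\R^n} f\,dQ.
\end{align*}
I would prove this by a one-coordinate-at-a-time interpolation: replace the coordinates of $Q$ by those of $Q'$ successively, and at each step invoke concavity of $f$ in the swapped coordinate (Jensen, comparing $\int f\, dQ'_i$ against $f$ evaluated at the barycenter) — or, cleaner, directly use concavity of $f$ to get $f(x) \le f(m) + \nabla f(m)\cdot(x-m)$ and integrate against the appropriate product measures. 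The main obstacle I anticipate is precisely this final inequality together with the integrability bookkeeping: making sure every integral of $\hat f_i$ or $f$ against the mixed product measures is well-defined (not $-\infty + \infty$), for which the uniform upper bound \eqref{bound-f} and the finite-moment consequence of $H(Q'\,|\,P)<\infty$ are the essential tools, while the inequality itself is a soft consequence of concavity of $f$ and should not require the fixed point structure beyond what is already used.
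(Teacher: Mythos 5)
Your first step (strict positivity of the density) is correct and is exactly the paper's argument: $\kappa$-log-concavity of $Q$ from Lemma \ref{lem: optimizer log-concave} gives Gaussian moments up to $\kappa/2$, and the growth bound on $|f|$ then makes $\hat{f}_i$ finite everywhere. The second half, however, has a genuine gap, and it sits precisely at the step you flagged as the "main obstacle." First, a sign issue: writing $H(Q'\,|\,P) = H(Q'\,|\,Q) + E(Q')$ with $E(Q') := \sum_i \int \hat{f}_i\,dQ'_i - \int f\,dQ' + \log Z - \sum_i\log Z_i$ and $H(Q\,|\,P)=E(Q)$, the conclusion $H(Q\,|\,P)\le H(Q'\,|\,P)$ after discarding $H(Q'\,|\,Q)\ge 0$ requires $E(Q')\ge E(Q)$, i.e.\ the \emph{reverse} of the inequality you wrote. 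More seriously, the energy-only inequality is false in either direction, so the reduction itself is invalid: concavity of $f$ does not make the fixed point a critical point of the energy alone, only of energy minus entropy. Take $n=2$, $f(x)=-\tfrac12(x_1^2+x_1x_2+x_2^2)$, so that the fixed point is $Q=N(0,1)\otimes N(0,1)$ and $\hat{f}_i(x_i)=-\tfrac12 x_i^2-\tfrac12$. A direct computation gives, for a product measure $Q'$ with marginal means $m'_1,m'_2$,
\begin{equation*}
E(Q')-E(Q) \;=\; \tfrac12\, m'_1 m'_2,
\end{equation*}
which takes both signs. So the term $H(Q'\,|\,Q)$ cannot be thrown away; it is exactly what compensates when $m'_1m'_2<0$. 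Relatedly, your fallback of applying Jensen coordinate-by-coordinate only produces upper bounds of the form $\int f\,d\nu \le f(\text{barycenter})$, which enter your target inequality with the wrong sign.

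The missing idea is the one the paper uses: you must compare $Q$ and the competitor along a path on which the energy and entropy terms have matching convexity, namely the displacement interpolation $M_i(t)=Q_i\circ((1-t)\mathrm{Id}+tT_i)^{-1}$ with $T_i$ the monotone transport maps. Along this path $t\mapsto\int f\,dM(t)$ is concave (because $f$ is concave on $\R^n$ and the interpolation is linear in the state variable) and $t\mapsto H(M(t))$ is displacement convex, so $G(M(t))$ is concave in $t$; the fixed point equation $Q_i\propto e^{\hat{f}_i}$ is then used to show the right-derivative at $t=0$ vanishes (via $\int(\hat{f}'_iQ_i-Q'_i)(T_i-\mathrm{Id})\,dx_i=0$), whence $G(Q')\le G(Q)$. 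Your multilinear/coordinate-swapping interpolation does not have this concavity, which is why the argument cannot close. Your integrability bookkeeping (using \eqref{bound-f} and moment bounds from log-concavity) is sensible but is not the real difficulty.
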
		
\begin{proof}
We first show that  $ Q $ has strictly positive density a.e. Since $ Q = Q_1 \times \cdots \times Q_n $ satisfies the fixed point equation \eqref{fixedpoint-pfsection}, each $ Q_i $ has a  density with exponent
\begin{align*} 
	\hat{f}_i(x_i) = \int_{\R^{n-1}} f(x_1,\ldots,x_n) \, \prod_{j \neq i} Q_j(dx_j)  \ge -c_1 e^{c_2x_i^2} \prod_{j \neq i}  \int_{\R} e^{c_2x_j^2}  Q_j(x_j) \, dx_j
\end{align*}
for every $ x_i \in \R $. From Lemma \ref{lem: optimizer log-concave} we know that $Q$ is $ \kappa $-log-concave.
Since $c_2 < \kappa/2$, we deduce that $\int_{\R} e^{c_2x_j^2}  Q_j(x_j) < \infty$.
Thus $ \hat{f}_i (x_i) > -\infty$ for all $ x_i \in \R $. 

Define $G(R) \coloneqq \int_{\R^n} f \, dR- H(R)$ for $R \in \P(\R^n)$. Let $Q^*$ be an optimizer of $\sup\{G(R) : R \in \Pprod(\R^n)\}$, which exists uniquely by Lemma \ref{lem: maximizer unique}. By Proposition \ref{pr:optimizer-to-fp}, we have $f \in L^1(Q^*)$, and  $Q^*$ satisfies the fixed point equation \eqref{fixedpoint-pfsection}. The argument given in the previous paragraph  implies that $Q^*$ has a strictly positive density a.e.
To complete the proof, we must show that $G(Q) \ge G(Q^*)$.
	
For $i=1,\ldots,n$, let $T_i : \R \to \R$ denote the unique nondecreasing function satisfying $Q_i \circ T_i^{-1}=Q^*_i$, and define $M_i(t) = Q_i \circ ((1-t)\mathrm{Id} + tT_i)^{-1}$. Let $M(t) = M_1(t) \times \cdots \times M_n(t)$, so that $G(M(t))=g_1(t)-g_2(t)$, where
	\begin{align*}
		g_1(t) &:= \int_{\R^n} f\big((1-t)x_1 + tT_1(x_1),\ldots,(1-t)x_n + tT_n(x_1)\big) \prod_{i=1}^n Q_i(dx_i), \\
		g_2(t) &:= H\big(M_1(t) \times \cdots \times M_n(t)\big) = \sum_{i=1}^nH(M_i(t)).
	\end{align*}
	Let us write $g^{\prime +}$ for the right-derivative of a real-valued function $g$, when it exists. 
	Note that $T_i$ is a.e.\ differentiable, as it is monotone.
	Using \cite[Theorem 5.30]{villani2003topics}, we may compute the right-derivatives at zero as
	\begin{align*} 
		g_1^{\prime +}(0) &= \sum_{i=1}^n \int_{\R^n}   \partial_i f (x) \big( T_i (x_i) -x_i \big) Q(x) dx, 
		 \\
		g_2^{\prime +}(0) &= - \sum_{i=1}^n \int_{\R}  \left(T_i'(x_i) - 1\right)Q_i(x_i) dx_i. 
	\end{align*}
	We wish to rewrite both terms in more useful forms. 
	
	We first claim that
	\begin{align}
		\int_{\R^n}   \partial_i f (x) \big( T_i (x_i) -x_i \big) Q(x) dx &= \int_{\R}  \hat{f}'_i (x_i) \big( T_i (x_i) - x_i \big) Q_i(x_i) dx_i, \label{eq:g1prime-claim1}
	\end{align}
	where $ \hat{f}_i $ is defined as in \eqref{fixedpoint-pfsection}.
	To see this, note that  $ \hat{f}_i(x_i) = \E_{Q}[f(x_i,X_{-i})] $ for all $ x_i \in \R $, so 
	\begin{align*}
	\hat{f}^{\prime +}_i (x_i) &= \lim_{h\downarrow 0} h^{-1}\E_Q[f(x_i+h,X_{-i}) - f(x_i,X_{-i})].
\end{align*}
By the concavity of $f$, the difference quotient $[f(x_i+h,X_{-i}) - f(x_i,X_{-i})]/h$ increases as $h \downarrow 0$,	and it is bounded from below for $0 < h \le h_0$ by $[f(x_i+h_0,X_{-i}) - f(x_i,X_{-i})]/h_0$, which has finite $Q$-expectation for a.e.\ choice of $h_0 > 0$ by Fubini's theorem since $f \in L^1(Q)$. 
	Hence, by monotone convergence,
	\begin{equation}
		\hat{f}^{\prime +}_i (x_i) = \E_Q[ \partial_i f (x_i, X_{-i})].\label{eq:g1prime-claim2}
	\end{equation}
	Moreover, this quantity is finite and nonincreasing in $x_i$ because $\hat{f}_i$ is a concave real-valued function. In addition, $\hat{f}'_i=\hat{f}^{\prime +}_i$ a.e. since concave functions are a.e.\ differentiable.
	Using \eqref{eq:g1prime-claim2}, we see that the right-hand side of \eqref{eq:g1prime-claim1} equals $\E_Q[\E_Q[\partial_i f (X) \,|\, X_i](T_i(X_i)-X_i)]$, which yields \eqref{eq:g1prime-claim1}.

	We next integrate by parts to get
	\begin{align}
		-\int_{\R}  \left(T_i'(x_i) - 1\right)Q_i(x_i) dx_i =  \int_\R \left(T_i(x_i) - x_i\right)Q_i'(x_i) \, dx_i. \label{eq:g2prime-claim1}
	\end{align}
	To justify this carefully, we use Lebesgue-Stieltjes integration by parts: Note that the probability density function of $Q_i$ is absolutely continuous because it is proportional to $e^{\hat{f}_i}$, and $\hat{f}_i$ is absolutely continuous as a concave function. Let $ F_{Q_i} $ and $ F_{Q^*_i} $ denote the CDFs of $ Q_i $ and $ Q^*_i $ respectively. Recalling that $T_i=F_{Q^*_i}^{-1} \circ F_{Q_i}$ is the monotone map pushing $Q_i$ forward to $Q^*_i$, and that both $Q_i$ and $Q^*_i$ admit strictly positive densities, the function $T_i$ is absolutely continuous. Hence, there is no jump term in the integration by parts, and we must only show that the boundary terms vanish. For this it suffices to show that there exist sequences $x^{\pm}_n \to \pm\infty$ such that
	\begin{align*}
		\lim_{n \to \infty} (T_i(x^\pm_n)-x^\pm_n)Q_i(x^\pm_n) = 0.
	\end{align*}
	If this were not the case, it would imply that $|T_i(x)-x|Q_i(x) \le (|T_i(x)|+|x|)Q_i(x)$ is bounded away from zero for $|x|$ sufficiently large. This would in turn imply that $\int_\R (|T_i(x_i)|+|x_i|)Q_i(x_i) dx_i= \infty$, contradicting
the fact that
\begin{align*}
\int_\R (|T_i(x_i)|+|x_i|)Q_i(x_i)dx_i = \int_\R |x_i|Q^*_i(x_i)dx_i + \int_\R |x_i|Q_i(x_i)dx_i < \infty.
\end{align*}
Both integrals are finite because $ Q_i $ and $Q^*_i$ are $ \kappa $-log-concave by Lemma \ref{lem: optimizer log-concave} and thus admit finite moments of every order. 
	With \eqref{eq:g2prime-claim1} and \eqref{eq:g1prime-claim1} now justified, we see that the right-derivative of $G(M(t))$ at $t=0$ is
	\begin{align*}
		g_1^{\prime +}(0) - g_2^{\prime +}(0) &= \sum_{i=1}^n\int_\R  \big(\hat{f}'_i (x_i)Q_i(x_i) - Q_i'(x_i)\big) \big( T_i (x_i) - x_i \big) dx_i.
	\end{align*}
	This is in fact zero, because $Q_i$ is proportional to $e^{\hat{f}_i}$. We saw in the proof of Lemma \ref{lem: maximizer unique} that $G(M(t))$ is concave. Since we now know that it has vanishing right-derivative at $t=0$, it follows that $G(M(1)) \le G(M(0))$. That is, $G(Q^*) \le G(Q)$, which completes the proof.
\end{proof}

\begin{proof}[\textbf{Proof of Theorem \ref{th:main-intro}}]
	Let $S_{\mathrm{opt}}$ denote the set of maximizers  in \eqref{MFopt-pfsection}, and let $S_{\mathrm{fix}}$ denote the set of $Q^* \in \Pprod(\R^n)$ satisfying $f \in L^1(Q^*)$ and the fixed point equation \eqref{fixedpoint-pfsection}. 
	Proposition \ref{pr:optimizer-to-fp} shows that $S_{\mathrm{opt}} \subset S_{\mathrm{fix}}$. Proposition \ref{pr:fp-to-optimizer} shows conversely that $S_{\mathrm{opt}} \supset S_{\mathrm{fix}}$, so in fact $S_{\mathrm{opt}} = S_{\mathrm{fix}}$. Lemma \ref{lem: maximizer unique} shows that this set is a singleton. Its unique element $Q^*$ is $\kappa$-log-concave by Lemma \ref{lem: optimizer log-concave} and has strictly positive density a.e.\ by Proposition  \ref{pr:fp-to-optimizer}.
	This proves claims (\ref{main.fp}--\ref{main.unique}) of Theorem \ref{th:main-intro}.

	To prove \eqref{main.bound}, recall the identity \eqref{entproj-id1}, which shows that 
	\begin{align*}
		R_f = \log \int_{\R^n} e^{f(x)}\,dx - \sup_{Q \in \Pprod(\R^n)}\left(\int_{\R^n} f\,dQ - H(Q)\right) = H(Q^*\,|\,P).
	\end{align*}
	The $\kappa$-log-concavity of $P$ and the log-Sobolev inequality (Theorem \ref{th:logsobolev}) imply
	\begin{align*}
		H(Q^*\,|\,P) &\le \frac{1}{2\kappa}\int_{\R^n}\left|\nabla \log \frac{dQ^*}{dP}\right|^2\,dQ^*. 
	\end{align*}
	Since $Q^*=Q^*_1\times \cdots \times Q^*_n$ is a product measure, we have $\partial_i \log Q^*(x) = \partial_i\log Q^*_i(x_i)$ for $x \in \R^n$  and note that the derivative exists almost everywhere because $\log Q^*_i$ is concave. We saw in \eqref{eq:g1prime-claim2} in the proof of Proposition \ref{pr:fp-to-optimizer} that the following identity is valid  for almost every $x_i \in \R$, with the expectation on the right-hand side being finite:
	\begin{align*}
		\partial_i\log Q^*_i(x_i) = \partial_i\E_{Q^*}[ f(X)\,|\,X_i=x_i] = \E_{Q^*}[\partial_i f(X)\,|\,X_i=x_i].
	\end{align*}
	Thus,
	\begin{align*}
		H(Q^*\,|\,P) &\le \frac{1}{2\kappa} \int_{\R^n} \sum_{i=1}^n \Big| \partial_i \log Q^*_i(x_i) - \partial_i f(x)\Big|^2\,Q^*(dx) \\
		&= \frac{1}{2\kappa} \E_{Q^*}\sum_{i=1}^n \left(\E_{Q^*}[\partial_i f(X)\,|\,X_i] - \partial_if(X)\right)^2  \\
		&= \frac{1}{2\kappa}\E_{Q^*}\sum_{i=1}^n \Var_{Q^*}(\partial_i f(X)\,|\,X_i).
	\end{align*}
This yields the first bound in \eqref{MFapprox-mainthm}.
	Recall that $Q^*_{-i}$ denotes the law of $(X_j)_{j \neq i}$, which equals the conditional law of $(X_j)_{j \neq i}$ given $X_i$ under $Q^*$ by independence. The measure $Q^*_{-i}$ is $\kappa$-log-concave because $Q^*_j$ is for each $j$. Hence, it obeys a Poincar\'e inequality (Theorem \ref{th:poincare}), $\Var_{Q^*_{-i}}(\varphi) \le \kappa^{-1} \int_{\R^{n-1}} |\nabla \varphi|^2 \, dQ^*_{-i},$ for any $C^1$ function $\varphi \in L^1(Q^*_{-i})$. Applying this to $\partial_if$ with coordinate $i$ fixed, 
	\begin{equation*}
		\Var_{Q^*}(\partial_i f(X)\,|\,X_i) \le \frac{1}{\kappa}\sum_{j \neq i}\E_{Q^*}[|\partial_{ij}f(X)|^2\,|\,X_i]. 
	\end{equation*}
Complete the proof of the second inequality of \eqref{MFapprox-mainthm} by using the tower property to get
	\begin{equation*}
		\frac{1}{2\kappa}\E_{Q^*}\sum_{i=1}^n\Var_{Q^*}(\partial_i f(X)\,|\,X_i) \le \frac{1}{2\kappa^2}  \sum_{i=1}^n\sum_{j \neq i}\E_{Q^*}[|\partial_{ij}f(X)|^2] . \qedhere
	\end{equation*}
\end{proof}

\subsection{Proof of Corollary \ref{co:refmeas}} \label{subsec: proofs-refprob}

	Let $f(x) := g(x) + \sum_{i=1}^nV_i(x_i) $.
	Then $ \int_{\R^n} e^g\,d\rho = \int_{\R^n} e^{f(x)}\,dx$, and the concavity of $g$ and $\kappa$-concavity of $V_i$ imply that $f$ is $\kappa$-concave. 
	Note also that for any $Q \in \Pprod(\R^n)$,
	\begin{equation*}
		\int_{\R^n} g\,dQ- H(Q\,|\,\rho) = \int_{\R^n} f\,dQ - H(Q).
	\end{equation*}
This shows that the optimization problems \eqref{MFopt-mainthm} and \eqref{MFopt-refmeas} are the same.
Moreover, the fixed point problems \eqref{fixedpoint-refmeas} and \eqref{fixedpoint-mainthm} admit exactly the same solutions: $Q^*_i$ solves \eqref{fixedpoint-mainthm} if and only if it solves \eqref{fixedpoint-refmeas}. With these identifications, applying Theorem \ref{th:main-intro} to $f$ immediately proves
claims (1--3) of Corollary \ref{co:refmeas}.
Finally, 
with $Q^*_i$  solving \eqref{fixedpoint-pfsection} (or equivalently \eqref{fixedpoint-refmeas}), we have
\begin{align*}
R^\rho_g = R_f &\le \frac{1}{\kappa^2}\sum_{1 \le i < j \le n} \E_{Q^*}[|\partial_{ij}f(X)|^2] = \frac{1}{\kappa^2}\sum_{1 \le i < j \le n} \E_{Q^*}[|\partial_{ij}g(X)|^2 ],
\end{align*}
	because $\partial_{ij}f=\partial_{ij}g$ for all $i \neq j$. This proves  claim (4) of Corollary \ref{co:refmeas}. \hfill\qedsymbol

\subsection{Proof of Proposition \ref{pr:tilt-Gaussian}} \label{subsec: proofs-tilts}

Note that $\int_{\R^n} f(x+y)\,\gamma_t(dx) < \infty$ for each $y \in \R^n$ by the growth assumption on $f$.
	The function 
	\begin{equation*}
		y \mapsto \int_{\R^n} f\,d\gamma_{y,t} - H(\gamma_{y,t}\,|\,\gamma_t)
		= \int_{\R^n} f(x+y)\,\gamma_t(dx) - \frac{1}{2t}|y|^2
	\end{equation*}
is $(1/t)$-concave and thus bounded from above. It admits a unique maximizer obtained by setting the gradient equal to zero; the first order condition is precisely \eqref{Gaussian-tilt-eq}.
	Let $P(dx)=Z^{-1}e^{f(x)}\gamma_t(dx)$.
	The simple identity
	\begin{align*}
		\log \int_{\R^n} e^{f}\,d\gamma_t - \left(\int_{\R^n} f\,d\gamma_{y,t} - H(\gamma_{y,t}\,|\,\gamma_t)\right) = H(\gamma_{y,t}\,|\,P),
	\end{align*}
	valid for all $y \in \R^n$, implies that
	\begin{align*}
		\log \int_{\R^n} e^{f}\,d\gamma_t - \sup_{y \in  \R^n }\left(\int_{\R^n} f\,d\gamma_{y,t} - H(\gamma_{y,t}\,|\,\gamma_t)\right) = \inf_{y \in \R^n}H(\gamma_{y,t}\,|\,P).
	\end{align*}
	The right-hand side is equal to $H(\gamma_{y^*,t}\,|\,P)$.
	The measure $P$ is $(1/t)$-log-concave, so we may use the log-Sobolev inequality (Theorem \ref{th:logsobolev}) to get
	\begin{align*}
		H(\gamma_{y^*,t}\,|\,P) &\le \frac{t}{2} \int_{\R^n} \left|\nabla \log\frac{d\gamma_{y^*,t}}{dP}\right|^2\,d\gamma_{y^*,t} = \frac{t}{2} \int_{\R^n} \left|\nabla \log\frac{d\gamma_{y^*,t}}{d\gamma_t}-\nabla \log\frac{dP}{d\gamma_t}\right|^2\,d\gamma_{y^*,t} \\
		&= \frac{t}{2}\int_{\R^n} \left| \frac{1}{t} y^* - \nabla f(x)\right|^2\,\gamma_{y^*,t}(dx) \\
		&= \frac{t}{2}\sum_{i=1}^n\Var_{\gamma_{y^*,t}}(\partial_i f),
	\end{align*}
	where the last step follows from \eqref{Gaussian-tilt-eq}.
Using the Gaussian Poincar\'e inequality (or Theorem \ref{th:poincare}), this is bounded by 
the second term on the right-hand side of \eqref{Gaussian-tilt-bound}. \hfill\qedsymbol

\subsection{Asymptotic independence} \label{subsec: proofs-asymp-indep}

\begin{proof}[Proof of first inequality in \eqref{W2subadditivity}]
	Let $P,Q \in \P(\R^n)$. Let $k_1,\ldots,k_m$ be positive integers summing to $n$. Suppose $P_1,\ldots,P_m$ are the marginals of $P$ on $\R^{k_1},\ldots,\R^{k_m}$, and define the marginals $Q_1,\ldots,Q_m$ similarly. Then
	\begin{align*}
		\sum_{i=1}^m \mathcal{W}_2^2(P_i,Q_i) \le \mathcal{W}_2^2(P,Q).
	\end{align*}
	Indeed, to prove this, let $(X,Y)$ be an optimal coupling of $(P,Q)$. Let $X_i$ be the $\R^{k_i}$ coordinate, for $i=1,\ldots,m$, and similarly define $Y_i$. Then $(X_i,Y_i)$ is a coupling of $(P_i,Q_i)$, and so
	\begin{align*}
		\mathcal{W}_2^2(P,Q) &= \E\left[|X-Y|^2\right] = \E\left[\sum_{i=1}^m  |X_i-Y_i|^2 \right]\ge \sum_{i=1}^m \mathcal{W}_2^2(P_i,Q_i).
	\end{align*}
	
	Now, let $1 \le k \le n$, and let $m=\lfloor n/k\rfloor$. Let $\Pi$ be the set of vectors $(S_1,\ldots,S_m)$ of disjoint $k$-element subsets of $[n]$. Let $Q_{S_i}$ and $P_{S_i}$ denote the corresponding marginals, on those coordinates in $S_i \subset [n]$. Note that $\mathcal{W}_2^2(P_{S_i},Q_{S_i})$ does not depend on the order of the elements of $S_i$. Then
	\begin{align*}
		\sum_{i=1}^m \mathcal{W}_2^2(P_{S_i},Q_{S_i}) \le \mathcal{W}_2^2(P_{S_1 \cup \cdots \cup S_m},Q_{S_1 \cup \cdots \cup S_m}) \le \mathcal{W}_2^2(P,Q).
	\end{align*}
	If $(S_1,\ldots,S_m)$ is chosen uniformly at random from $\Pi$ and $ i $ is chosen uniformly at random from $[m]$, then the marginal law of $S_i$ is the same as the law of a uniformly random choice of $k$-element subset of $[n]$.
	In particular,
	\begin{align*}
		\frac{1}{\binom{n}{k}}\sum_{S \subset [n], \, |S|=k} \mathcal{W}_2^2(P_{S},Q_{S}) = \frac{1}{|\Pi|}\sum_{(S_1,\ldots,S_m) \in \Pi}\frac{1}{m}\sum_{i=1}^m \mathcal{W}_2^2(P_{S_i},Q_{S_i}).
	\end{align*}
	Combining the two previous inequalities yields
	\begin{equation*}
		\frac{1}{\binom{n}{k}}\sum_{S \subset [n], \, |S|=k} \mathcal{W}_2^2(P_{S},Q_{S}) \le \frac{1}{m} \mathcal{W}_2^2(P,Q) = \frac{1}{\lfloor n/k\rfloor} \mathcal{W}_2^2(P,Q). \qedhere
	\end{equation*}
\end{proof}

\begin{proof}[Proof of Corollary \ref{co:linearstat-concentration}]
	By the triangle inequality, the square root of the left-hand side of \eqref{ineq:weakLLN} is no more than $A_1+A_2$,
	where we define
	\begin{align*}
		A_1 &:= \E_P\left[ \left(\frac{1}{n}\sum_{i=1}^n\varphi(X_i) - \frac{1}{n}\sum_{i=1}^n\E_P[\varphi(X_i)]\right)^2\right]^{1/2}, \\
		A_2 &:=  \left|\frac{1}{n}\sum_{i=1}^n (\E_P[\varphi(X_i)] - \E_{Q^*}[\varphi(X_i)])\right|.
	\end{align*}
	Recall that $|\varphi'| \le 1$.
	Using Kantorovich duality and  \eqref{W2subadditivity} with $k=1$,
	\begin{align*}
		A_2^2 \le \frac{1}{n}\sum_{i=1}^n\mathcal{W}_1^2(P_i,Q^*_i) \le  \frac{1}{n}\sum_{i=1}^n\mathcal{W}_2^2(P_i,Q^*_i)   \le  \frac{2R_f}{\kappa n}.
	\end{align*}
	Apply the Poincar\'e inequality (Theorem \ref{th:poincare}) to the function $x \mapsto (1/n)\sum_{i=1}^n \varphi(x_i)$ to get
	\begin{align*}
		A_1^2 &= \Var_P\left( \frac{1}{n}\sum_{i=1}^n\varphi(X_i)\right) \le \frac{1}{\kappa n^2}\sum_{i=1}^n \E_P[| \varphi'(X_i)|^2] \le \frac{1}{\kappa n}.
	\end{align*}
	Combine these two bounds to complete the proof.
\end{proof}

\section{Gibbs measure proofs} \label{sec: proofs-gibbs}

This section proves the results of Section \ref{se:Gibbs}. Throughout, the function $f : \R^n \to \R$ is defined as in \eqref{def:Gibbs-potential} 
 and satisfies Assumption \ref{ass:Gibbs}. 

\noindent\textbf{Proof of Lemma \ref{lem: gibbs-log-concave}.}
Compute two derivatives to find, for all $i \neq j$,
\begin{align*}
	\partial_{ii}f(x) &= V''(x_i) + \sum_{j \neq i} J_{ij}K''(x_i-x_j), \qquad  
	\partial_{ij}f(x) = -J_{ij}K''(x_i-x_j).
\end{align*}
Hence, for any $x,z \in \R^n$,
\begin{align*}
	z^\top\nabla^2f(x) z &= \sum_{i,j=1}^n z_iz_j\partial_{ij}f(x) = \sum_{i=1}^n z_i^2V''(x_i) + \sum_{i,j=1}^n \big(z_i^2 - z_iz_j\big) J_{ij} K''(x_i-x_j).
\end{align*}
Using the evenness of $K''$ and the symmetry of $J$, 
\begin{align*}
	\sum_{i,j=1}^n \big(z_i^2 - z_iz_j\big) J_{ij} K''(x_i-x_j) &= \frac12\sum_{i,j=1}^n (z_i - z_j)^2 J_{ij} K''(x_i-x_j).
\end{align*}
Since $K'' \le 0$ and $J_{ij} \ge 0$, we find that this quantity is nonpositive. By $\kappa$-concavity of $V$,
\begin{align*}
	z^\top\nabla^2f(x) z &\le \sum_{i=1}^n z_i^2V''(x_i) \le -\kappa |z|^2,
\end{align*}
which shows that $f$ is $\kappa$-concave. \hfill \qed

\noindent\textbf{Proof of Corollary \ref{cor:quadratic}.}
Note that $f$ is $C^2$ and $\kappa$-concave.
	Also, the assumptions on $ |V|$ and $| K'' |$ in Assumption \ref{ass:Gibbs} clearly imply that $ |f| $ satisfies the growth assumption in Theorem \ref{th:main-intro}.
	 Therefore, Theorem \ref{th:main-intro} applies. Let $Q^*$ be given as therein. Computing derivatives as above, we have
\begin{align}
	R_f & \le  \frac{1}{\kappa^2}\sum_{1 \le i < j \le n}\E_{Q^*}[|\partial_{ij}f(X)|^2] = \frac{1}{\kappa^2}\sum_{1 \le i < j \le n}J_{ij}^2\E_{Q^*}\big[|K''(X_i-X_j)|^2\big]. \label{pf:QMGFbound2}
\end{align}
Using the assumption on $K''$, we find
\begin{align}
	\E_{Q^*}\big[|K''(X_i-X_j)|^2\big] &\le a\E_{Q^*}\big[e^{b|X_i-X_j|}\big] . \label{pf:QMGFbound3}
\end{align}
By assumption, $X_i-X_j$ has mean zero under $Q^*$.
It follows from the $\kappa$-log-concavity of  $Q^*$ that the law of $X_i-X_j$ is $(\kappa/2)$-log-concave (see, e.g.,  \cite[Theorem 3.7(a) and Theorem 3.8]{saumard2014log}).
This implies that it is subgaussian in the sense that
\begin{align*}
	\E_{Q^*}[e^{s(X_i-X_j)}] &\le e^{s^2/\kappa }, \qquad \forall s \in \R.
\end{align*}
Indeed, this can be deduced from the  log-Sobolev inequality (Theorem \ref{th:logsobolev}) via Herbst's argument or \cite[Theorem 1.3]{bobkov1999exponential}.
Thus, using \eqref{pf:QMGFbound3}, 
\begin{align*}
	\E_{Q^*}\big[|K''(X_i-X_j)|^2\big] &\le  a\E_{Q^*}\big[e^{b(X_i-X_j)}+e^{b(X_j-X_i)}\big] \le 2ae^{b^2/\kappa}.
\end{align*}
Combine this with \eqref{pf:QMGFbound2} to complete the proof.
\hfill \qed

\subsection{Doubly stochastic matrices}
We now turn to the proof of Theorem \ref{thm: regular graphs}. We first need a straightforward lemma about displacement convexity, which is likely known. 

\begin{lemma} \label{lem: entropy.convexity}
	Let $ Q_1, \dots, Q_n \in \P(\R)$ and $t_1, \dots, t_n \in [0, 1]$  be such that $\sum_{i=1}^n t_i = 1$. Then there exists a random vector $X = (X_1, \dots, X_n)$ such that $X_i \sim Q_i$ for each $i$ and 
	\begin{align*}
		H\bigg( \mathrm{Law} \bigg( \sum_{i=1}^n t_i X_i\bigg)\bigg) \le  \sum_{i=1}^n t_i H( Q_i).
	\end{align*}
\end{lemma}
\begin{proof}
	The proof is by induction on $n$, with the case $n=1$ holding trivially.
	Assume that the statement of the lemma is true for some $n$. Let $ Q_1, \dots, Q_{n+1} \in \P(\R)$ and $t_1, \dots, t_{n+1} \in [0, 1]$  be such that $\sum_{i=1}^{n+1} t_i = 1$. 
	Without loss of generality, assume that $t_{n+1} < 1$ and that $ Q_1, \dots, Q_{n+1}$ have finite entropy, as otherwise there is nothing to prove. 
	For $i=1,\dots, n$, define $\tilde{t}_i \coloneqq t_i / (1 - t_{n+1})$, so that $\sum_{i=1}^{n} \tilde{t}_i = 1$.  By assumption, we may find a random vector $( X_1, \dots, X_n)$ such that $X_i \sim Q_i$ for each $i=1,\ldots,n$ and 
	\begin{align} \label{eq: ind.hyp}
		H(\widetilde{Q}) \le  \sum_{i=1}^{n} \tilde{t}_i H(Q_i),
	\end{align}
	where  $\widetilde{Q}$ denotes the law of $\widetilde{X} :=  \sum_{i=1}^{n} \tilde{t}_i X_i$. By absolute continuity, there is a unique nondecreasing function $T : \R \to \R$ such that $\widetilde{Q} \circ T^{-1} = Q_{n+1}$. 
	The entropy functional is displacement convex \cite[Theorem 5.15(i)]{villani2003topics}, which means that the function 
	\begin{align*}
		[0,1] \ni t \mapsto H\big(\widetilde{Q} \circ (tT + (1-t)\mathrm{Id})^{-1}\big)
	\end{align*}
	is convex. In particular, letting $X_{n+1}=T(\widetilde{X})$, we find
	\begin{align*}
		H\big(\mathrm{Law}(t_{n+1}X_{n+1} + (1-t_{n+1})\widetilde{X})\big) &= H\big(\widetilde{Q} \circ (t_{n+1}T + (1-t_{n+1})\mathrm{Id})^{-1}\big) \\
		&\le t_{n+1}H(Q_{n+1}) + (1-t_{n+1})H(\widetilde{Q}).
	\end{align*}
	By \eqref{eq: ind.hyp} and the definition of $\tilde{t}_i$, we have $(1-t_{n+1})H(\widetilde{Q}) \le \sum_{i=1}^{n} t_i H(Q_i)$, completing the proof.
\end{proof}

\noindent{\textbf{Proof of Theorem \ref{thm: regular graphs}\eqref{regular graphs conv}.}}
Let us abbreviate
\begin{equation}
	M_n:=\sup_{Q \in \Pprod(\R^n)} M_n(Q), \label{eq: MF.Gibbs.sup}
\end{equation}
where we  define
\begin{align*} \begin{split}
		M_n(Q) &:=\int_{\R^n} f\,dQ - H(Q) \\
		&= \sum_{i=1}^n \int_{\R} V(x) \, Q_i(dx)  + \frac12 \sum_{i,j=1}^n J_{ij} \int_{\R} \int_{\R} K(x-y)\,Q_i(dx)Q_j(dy)  -  \sum_{i=1}^n H(Q_i), \end{split} 
\end{align*}
where the last equality used the symmetry of $J$ and $K$, the fact that the diagonal entries of $J$ are zero, and the tensorization of entropy. Recall that $\log \int_{\R^n} e^f\,dx = M_n + R_f$, by definition of $R_f$. 
We will complete the proof by showing that
\begin{align} \label{eq: MF.Gibbs.limit}
	M_n = n\sup_{Q \in \P(\R)} \left( \int_{\R} V\,dQ + \frac12 \int_{\R}\int_{\R} K(x-y)Q(dx)Q(dy) - H(Q)\right),
\end{align}
and that the optimizer $Q^*=Q^*_1 \times \cdots \times Q^*_n$ in \eqref{MFopt-mainthm} must be i.i.d., or $Q^*_1=\cdots=Q^*_n$.
Indeed, the i.i.d. form of $ Q^* $ implies $\E_{Q^*}[X_i-X_j]=0$ for all $i,j$. Using this and the assumption $\tr(J^2)=o(n)$, we may apply Corollary  \ref{cor:quadratic} to deduce that $R_f/n \to 0$, and Theorem \ref{thm: regular graphs}\eqref{regular graphs conv} follows.

The proof of the inequality $ (\ge) $ in \eqref{eq: MF.Gibbs.limit} is immediate upon restricting the supremum in \eqref{eq: MF.Gibbs.sup} to i.i.d.\ measures and using $\sum_{i,j=1}^nJ_{ij}=n$:
\begin{align*}
	M_n &\ge \sup_{Q\in\P(\R)}\bigg(  n \int_{\R} V(x) \, Q(dx) + \frac12 \sum_{i,j=1}^n J_{ij} \int_{\R} \int_{\R} K(x-y)\,Q(dx)Q(dy) - nH(Q)\bigg)\\
	& = n\sup_{Q \in \P(\R)} \bigg( \int_{\R} V\,dQ + \frac12 \int_{\R}\int_{\R} K(x-y)Q(dx)Q(dy) - H(Q)\bigg).
\end{align*}

To prove the inequality $ (\le) $ in \eqref{eq: MF.Gibbs.limit}, fix $Q = Q_1 \times \cdots \times Q_n \in \Pprod(\R^n)$ arbitrarily. By Lemma \ref{lem: entropy.convexity}, there exists a random vector $X = (X_1, \dots, X_n)$ such that $X_i \sim Q_i$ for all $i$ and 
\begin{equation}  \label{eq:Hineq} 
	H(\overline{Q}) \le  \frac 1n \sum_{i=1}^n  H(  Q_i ).
\end{equation}
where $\overline{Q}$ denotes the law of $\frac 1n \sum_{i=1}^n X_i$.
Using the concavity  $V$, we find
\begin{equation}
	\frac{1}{n}\sum_{i=1}^n \int_{\R} V(x) \, Q_i(dx) = \E\bigg[ \frac{1}{n}\sum_{i=1}^n  V(X_i) \bigg] \le \int_\R V\,d\overline{Q}. \label{eq:Vineq} 
\end{equation}
Let $Y=(Y_1,\ldots,Y_n)$ be an independent copy of $X$. 
Using the concavity of $K$ and the fact that $\sum_i J_{ij}=\sum_j J_{ij}=1$, we have
\begin{align}
	\frac{1}{n} \sum_{i,j=1}^n J_{ij} & \int_{\R} \int_{\R}  K(x-y)\,Q_i(dx)Q_j(dy) \nonumber\\
	&= \E\bigg[ \frac{1}{n} \sum_{i,j=1}^n J_{ij} K(X_i-Y_j) \bigg]  
	\le \E\bigg[ K \bigg( \frac{1}{n} \sum_{i,j=1}^n J_{ij}(X_i-Y_j)\bigg)\bigg] \nonumber \\
	&= \E\bigg[ K \bigg( \frac{1}{n} \sum_{i=1}^nX_i - \frac{1}{n} \sum_{j=1}^nY_j \bigg)\bigg]  = \int_\R \int_\R K(x-y) \,\overline{Q}(dx)\overline{Q}(dy). \label{eq:Kineq} 
\end{align}
Combining \eqref{eq:Hineq}, \eqref{eq:Vineq}, and \eqref{eq:Kineq}, we see that
\begin{align*}
	M_n(Q)/n &\le \int_\R V\,d\overline{Q} + \frac12\int_\R \int_\R K(x-y) \,\overline{Q}(dx)\overline{Q}(dy) - H(\overline{Q}) = M_n(\overline{Q}^{\otimes n})/n.
\end{align*}
In other words, for an arbitrary choice of product measure $Q$, we may increase $M_n(Q)$ by replacing $Q$ with the i.i.d.\ measure $\overline{Q}^{\otimes n}$. This completes the proof. \hfill \qed

\vskip.2cm

\noindent{\textbf{Proof of Theorem \ref{thm: regular graphs}\eqref{regular graphs weak law}.}}
We first justify the uniqueness claim. From part \eqref{main.unique} of Theorem \ref{th:main-intro}, we know that the optimizer $ Q^* \in \Pprod(\R^n) $ in \eqref{eq: MF.Gibbs.sup} is unique. It follows from the previous paragraph that this unique optimizer is in fact i.i.d., i.e., $ Q^* = Q^{\otimes n}$, where $Q \in \P(\R) $ is the (necessarily unique) optimizer of \eqref{eq: MF.Gibbs.limit}, which does not depend on $n$. This proves the desired uniqueness. 

Turning to the proof of \eqref{eq: regular graphs weak law}, recall that $R_f/n\to 0$, and use Corollary \ref{co:linearstat-concentration} and the aforementioned  i.i.d.\ form of the optimizer $ Q^\ast = Q^{\otimes n}$ to deduce that, for any 1-Lipschitz function $\varphi$,
\begin{align*}
\E_P\bigg[ \bigg(\frac{1}{n}\sum_{i=1}^n\varphi(X_i) - \int_\R \varphi\,dQ\bigg)^2\bigg]  \le \frac{(1 +  \sqrt{2R_f})^2}{\kappa n} \to 0, \quad \text{as } n \to \infty.
\end{align*}
This is enough to deduce that $\frac{1}{n}\sum_{i=1}^n\delta_{X_i}$ converges to $Q$ weakly in law. 
\hfill \qed

\subsection{Graphons proofs}

This section is devoted to the proof of Theorem \ref{thm:graphons}. 
For $W\in \mathcal{W}$ and any measurable function $\psi : \R^2 \to \R$ bounded from above, define  $T_{W,\psi} : \Punif \to \R$ by 
\begin{equation*}
T_{W,\psi}(\mu):= \E_{\mu^{\otimes 2}} \left[\psi(X_1,X_2) W(U_1,U_2)\right].
\end{equation*}
where $(U_1,X_1)$ and $(U_2,X_2)$ are independent with law $\mu$.
Note that $W \ge 0$ is integrable, so $T_{W,\psi}(\mu)$ is well-defined in $[-\infty,\infty)$. Let $\overline{\mu} := \mathrm{Unif}[0,1]\times \rho$,  and define $I : \Punif \to [0,\infty]$ by
\begin{equation*}
I(\mu):=H(\mu\,|\, \overline{\mu})=\int_0^1 H(\mu_u\,|\, \rho)\,du,
\end{equation*} 
with the second identity coming from the chain rule for relative entropy \cite[Theorem B.2.1]{dupuis2011weak}, and we recall that $\rho(dx)=e^{V(x)}dx$ is a probability measure. We begin with  two lemmas pertaining to the continuity of $T_{W,\psi}$.

\begin{lemma}\label{lem:properties}
	Let $ \mathcal{K} \subset \R$ be a compact interval.
	Let $\psi: \R^2 \to \R$ be supported on $ \mathcal{K}^2 $ and continuous when restricted to $ \mathcal{K}^2 $.
	\begin{enumerate}[(1)]
		\item If $\{W_\ell\}$ converges to $W$ in  strong cut metric and $W_\ell, W \ge 0 $, then 
		\begin{align*}
			\sup_{\mu\in \Punif}\Big|T_{W_\ell,\psi}(\mu)-T_{W,\psi}(\mu)\Big|\to 0.
		\end{align*}
		
		\item The map $\mu\to T_{W,\psi}(\mu)$ is continuous on $\{\mu\in \Punif:I(\mu)<\infty\}$, with respect to the topology of weak convergence.
	\end{enumerate}
\end{lemma}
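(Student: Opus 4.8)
The plan is to reduce each part to an integrand that is tractable: for (1), one that is ``rank one'' in the pair $(u_1,u_2)$, so that the cut metric applies; for (2), one that is jointly continuous, so that weak convergence applies. Throughout I would use the disintegration $\mu(du,dx)=du\,\mu_u(dx)$ and the resulting formula
\[
T_{W,\psi}(\mu)=\int_{[0,1]^2} W(u_1,u_2)\,g^\mu(u_1,u_2)\,du_1\,du_2,\qquad g^\mu(u_1,u_2):=\int_\R\int_\R \psi(x_1,x_2)\,\mu_{u_1}(dx_1)\,\mu_{u_2}(dx_2),
\]
together with $|g^\mu|\le\|\psi\|_\infty$ and the fact that, since every $\mu\in\Punif$ has uniform first marginal, $\E_{\mu^{\otimes2}}[h(U_1,U_2)]=\int_{[0,1]^2}h(u_1,u_2)\,du_1\,du_2$ for any measurable $h$.

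For part (1), fix $\varepsilon>0$. Since $\psi$ is continuous on the compact square $\K^2$ and supported there, I would approximate it uniformly to within $\varepsilon$ by a step function $\psi_\varepsilon=\sum_{k=1}^N c_k\,\mathbf{1}_{I_k\times J_k}$ with $I_k,J_k$ subintervals of $\K$, where $N$ and $\max_k|c_k|\le\|\psi\|_\infty$ depend only on $\varepsilon$ and $\psi$. Writing $g^\mu_\varepsilon(u_1,u_2)=\sum_k c_k\,\mu_{u_1}(I_k)\,\mu_{u_2}(J_k)$, one has $|g^\mu-g^\mu_\varepsilon|\le\varepsilon$ pointwise, since $\psi$ and $\psi_\varepsilon$ are both supported on $\K^2$. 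The key estimate is that for any measurable $a,b:[0,1]\to[0,1]$,
\[
\Bigl|\int_{[0,1]^2}\bigl(W_\ell-W\bigr)(u_1,u_2)\,a(u_1)\,b(u_2)\,du_1\,du_2\Bigr|\le d_\square(W_\ell,W),
\]
which follows from the layer-cake identities $a(u)=\int_0^1\mathbf{1}_{\{a(u)>s\}}\,ds$ and $b(u)=\int_0^1\mathbf{1}_{\{b(u)>t\}}\,dt$, Fubini, and the definition of $d_\square$. Applying this with $a(u)=\mu_u(I_k)$ and $b(u)=\mu_u(J_k)$ and summing yields $|\int(W_\ell-W)g^\mu_\varepsilon|\le(\sum_k|c_k|)\,d_\square(W_\ell,W)$, uniformly in $\mu$, while the remainder is bounded by $\varepsilon\int|W_\ell-W|\le\varepsilon(\|W\|_{L^1}+\|W_\ell\|_{L^1})$. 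Since $d_\square(W_\ell,W)\to0$ forces $\int W_\ell\to\int W$ (take $S=T=[0,1]$) and $W_\ell,W\ge0$, the masses $\|W_\ell\|_{L^1}$ stay bounded, so letting $\ell\to\infty$ and then $\varepsilon\to0$ gives $\sup_\mu|T_{W_\ell,\psi}(\mu)-T_{W,\psi}(\mu)|\to0$.

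For part (2), the uniform-first-marginal identity gives, for any nonnegative measurable $W'\le W$, the uniform bound $|T_{W,\psi}(\mu)-T_{W',\psi}(\mu)|\le\|\psi\|_\infty\|W-W'\|_{L^1}$. I would first truncate $W$ at a level $M$ with $\|(W-M)^+\|_{L^1}$ small, then approximate the bounded function $W\wedge M$ in $L^1$ by a continuous $V$ with $0\le V\le M$ (truncation of the approximant to $[0,M]$ does not increase the $L^1$-error). This reduces the claim to continuity of $\mu\mapsto T_{V,\psi}(\mu)$ for $V$ bounded and continuous. Now $h(u_1,x_1,u_2,x_2):=V(u_1,u_2)\psi(x_1,x_2)$ is bounded, and its discontinuity set lies in $\{x_1\in\partial\K\}\cup\{x_2\in\partial\K\}$, which is Lebesgue-null in each $x$-coordinate; since $I(\mu)<\infty$ forces $\mu\ll\mathrm{Unif}[0,1]\times\rho$ with $\rho$ absolutely continuous, the $x$-marginals of $\mu$ are absolutely continuous, so $\mu^{\otimes2}$ gives this set zero mass and $h$ is $\mu^{\otimes2}$-a.e.\ continuous. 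As $\mu_n\to\mu$ weakly implies $\mu_n^{\otimes2}\to\mu^{\otimes2}$ weakly, the portmanteau theorem gives $T_{V,\psi}(\mu_n)\to T_{V,\psi}(\mu)$, and an $\varepsilon/3$ argument combining this with the two uniform estimates proves (2). I expect the main obstacle to be in part (1), namely replacing a general continuous kernel $\psi$ by a step function to expose rank-one integrands $a(u_1)b(u_2)$ to which the cut metric applies, while keeping every error term uniform in $\mu$; part (2) is comparatively routine, the only delicate point being the discontinuities of $\psi$ on $\partial\K^2$, which are absorbed by the absolute continuity coming from $I(\mu)<\infty$.
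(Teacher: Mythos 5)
Your proof is correct and follows essentially the same route as the paper's: for (1) you approximate $\psi$ uniformly by a finite sum of product functions so that the cut metric controls each term uniformly in $\mu$ (the paper uses Stone--Weierstrass tensor products of continuous functions where you use step functions, and it asserts the bound against $[0,1]$-valued functions that you justify explicitly via the layer-cake identity), and for (2) you replace $W$ by a continuous approximant with a uniform-in-$\mu$ error bound and then combine weak convergence with $\mu^{\otimes 2}$-a.e.\ continuity of the integrand, exactly as in the paper. The only cosmetic difference in (2) is that you use the direct estimate $\|\psi\|_\infty\|W-V\|_{L^1}$ where the paper invokes part (1) for the same purpose.
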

\begin{proof}
	We begin with (1). Let $\mathcal{V}$ denote the space of functions $\phi:\mathcal{K}^2\mapsto \R$ of the form
	\begin{align}\label{eq:defU}
		\phi(x,y)=\sum_{i=1}^L c_i a_i(x)b_i(y),
	\end{align}
	for some $L\in \N$, $c_i\in \R$, and continuous functions $a_i,b_i:\mathcal{K} \to [0,1]$. It is easy to check that $\mathcal{V}$ is closed under multiplication, contains the constant functions, separates points in $\mathcal{K}^2$, and is a vector subspace of the space $C(\mathcal{K}^2)$ of continuous real-valued functions on $\mathcal{K}^2$.
	By the Stone-Weierstrass Theorem, we deduce that $\mathcal{V}$ is dense in $C(\mathcal{K}^2)$ with the supremum norm. Let $\varepsilon>0$, and find $\phi\in \mathcal{V}$ such that  $|\psi-\phi| < \varepsilon$ uniformly on $\mathcal{K}^2$.
	Extend the domain of $ \phi $ to $\R^2$ by setting $ \phi = 0 $ on the complement of $\mathcal{K}^2$. 
	 Then for all $\mu \in \Punif$,
	\begin{align*}
		\big|T_{W_\ell,\psi}(\mu)-T_{W_\ell,\phi}(\mu)\big|\le \varepsilon  \|W_\ell\|_{L_1[0,1]^2}, \quad \big|T_{W,\psi}(\mu)-T_{W,\phi}(\mu)\big|\le \varepsilon \|W\|_{L_1[0,1]^2}.
	\end{align*}
	Consequently, using the  triangle inequality, we have
	\begin{align}\label{eq:wtophi}
		\big|T_{W_\ell,\psi}(\mu)-T_{W,\psi}(\mu)\big|\le \varepsilon \|W_\ell\|_{L_1[0,1]^2} + \varepsilon \|W\|_{L_1[0,1]^2}  +\big|T_{W_\ell,\phi}(\mu)-T_{W,\phi}(\mu)\big|.
	\end{align}
	Since $\phi$ is of the form \eqref{eq:defU}, we have
	\begin{align*}
		T_{W_\ell,\phi}(\mu)=\sum_{i=1}^L c_i \int_{[0,1]^2}\bar a_i(u)\bar b_i(v)W_\ell(u,v) dudv,
	\end{align*}
	where we define $\bar a_i(u) := \E_{\mu}[a_i(X)\,|\,U=u]$, and $\bar b_i$ similarly. This yields
	\begin{align}\label{eq:phi}
		\big| T_{W_\ell,\phi}(\mu)- T_{W,\phi}(\mu)\big|\le \sum_{i=1}^L|c_i| d_\square(W_\ell,W).
	\end{align}
	Noting that  $ d_\square(W_\ell, W) \to 0$ implies $ \|W_\ell\|_{L_1[0,1]^2}  \to \|W\|_{L_1[0,1]^2} $, we may now combine \eqref{eq:wtophi} and \eqref{eq:phi}, sending $\ell\to\infty$ and then $\varepsilon \to 0$, to prove the claim (1).
	
	To prove (2), let $\mu_k$ be a sequence of measures in $\Punif$ converging weakly to $\mu_\infty$, such that $I(\mu_\infty)<\infty$. Let $W_\ell$ be a sequence of continuous functions in $\mathcal{W}$ converging in $L_1[0,1]^2$ to $W$. By the triangle inequality,
	\begin{align*}
		\big|T_{W,\psi}(\mu_k)-T_{W,\psi}(\mu_\infty)\big|\le 2\sup_{\nu\in \Punif} \big|T_{W,\psi}(\nu)-T_{W_\ell,\psi}(\nu)\big|+ \big|T_{W_\ell,\psi}(\mu_k)-T_{W_\ell,\psi}(\mu_\infty)\big|.
	\end{align*}
	The first term converges to $0$ as $\ell\to\infty$, by part (1) and the fact that convergence in $ L_1[0,1]^2 $ implies convergence in  strong cut metric. The second term converges to $0$ for fixed $\ell$ as $k\to\infty$, using the fact that $\mu_k$ converges weakly to $\mu_\infty$, and the set of discontinuity points of $W_\ell(\cdot,\cdot)\psi(\cdot,\cdot)$ is contained in $[0,1]^2\times \partial(\mathcal{K}^2)$, which has measure $0$ under $\mu_\infty^{\otimes 2}$ (as $\mu_\infty$ is absolutely continuous with respect to Lebesgue measure on $[0,1]\times \R$).
\end{proof}

\begin{lemma} \label{lem: TfW weak conv}
	Suppose $\mu_m$ is a sequence of measures in $\Punif$ converging weakly to $\mu_\infty$. Let $\psi:\R^2\mapsto\R$ be a continuous function, and let $W\in L_1[0,1]^2$. For $1\le m\le \infty$, let $(U^m_1,X^m_1),(U^m_2,X^m_2)\stackrel{i.i.d.}{\sim} \mu_m$. Then
	\begin{equation*}
		W(U^m_1,U^m_2) \psi(X^m_1, X^m_2) \stackrel{d}{\to} W(U^\infty_1,U^\infty_2) \psi(X^\infty_1,X^\infty_2). 
	\end{equation*}
\end{lemma}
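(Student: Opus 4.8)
The plan is to reduce to the case of a \emph{continuous} kernel by an $L_1$-approximation of $W$, exploiting the structural feature that the first marginal of every $\mu_m$ is \emph{exactly} uniform; this is what will make the approximation error uniform in $m$. Write $Z_m := W(U^m_1,U^m_2)\,\psi(X^m_1,X^m_2)$, and for a continuous $W_\ell:[0,1]^2\to\R$ write $Z^\ell_m := W_\ell(U^m_1,U^m_2)\,\psi(X^m_1,X^m_2)$, for $1\le m\le\infty$. Since weak convergence on $\P(\R)$ is metrized by the bounded-Lipschitz distance $d_{BL}$, it suffices to show $d_{BL}(\mathrm{Law}(Z_m),\mathrm{Law}(Z_\infty))\to 0$, and I would do so through the triangle inequality via $Z^\ell_m$ and $Z^\ell_\infty$.

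First I would treat the regularized kernels. Because $\mu_m\to\mu_\infty$ weakly, also $\mu_m^{\otimes 2}\to\mu_\infty^{\otimes 2}$ weakly on $([0,1]\times\R)^2$, which is the common law of $(U^m_1,X^m_1,U^m_2,X^m_2)$. For continuous $W_\ell$ the map $(u_1,x_1,u_2,x_2)\mapsto W_\ell(u_1,u_2)\psi(x_1,x_2)$ is continuous, so the continuous mapping theorem gives $Z^\ell_m\stackrel{d}{\to}Z^\ell_\infty$ as $m\to\infty$, i.e.\ $d_{BL}(\mathrm{Law}(Z^\ell_m),\mathrm{Law}(Z^\ell_\infty))\to 0$, for each fixed $\ell$.

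Next I would bound $d_{BL}(\mathrm{Law}(Z_m),\mathrm{Law}(Z^\ell_m))$ uniformly in $m$. Choose continuous $W_\ell$ with $\|W-W_\ell\|_{L_1[0,1]^2}\to 0$, possible since $C([0,1]^2)$ is dense in $L_1[0,1]^2$. Using the elementary bound $d_{BL}(\mathrm{Law}(Y),\mathrm{Law}(Y'))\le\E[\min(|Y-Y'|,2)]$ and splitting according to whether $|\psi(X^m_1,X^m_2)|\le R$, one gets, for any $R>0$,
\begin{align*}
d_{BL}\big(\mathrm{Law}(Z_m),\mathrm{Law}(Z^\ell_m)\big) \le R\,\E\big[|W-W_\ell|(U^m_1,U^m_2)\big] + 2\,\PP\big(|\psi(X^m_1,X^m_2)|>R\big).
\end{align*}
Since $\mu_m\in\Punif$, the pair $(U^m_1,U^m_2)$ is uniform on $[0,1]^2$, so the first term equals $R\,\|W-W_\ell\|_{L_1[0,1]^2}$, which is free of $m$. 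For the second term, weak convergence of the $\R$-marginals of the $\mu_m$ makes $\{\mathrm{Law}(X^m_1):1\le m\le\infty\}$, hence $\{\mathrm{Law}((X^m_1,X^m_2))\}$, tight, so using continuity of $\psi$ (hence boundedness of $\psi$ on compacts) I can choose $R$ making $\sup_{1\le m\le\infty}\PP(|\psi(X^m_1,X^m_2)|>R)$ as small as desired. Hence $\sup_{1\le m\le\infty}d_{BL}(\mathrm{Law}(Z_m),\mathrm{Law}(Z^\ell_m))\to 0$ as $\ell\to\infty$. Feeding the last two paragraphs into the triangle inequality
\begin{align*}
d_{BL}\big(\mathrm{Law}(Z_m),\mathrm{Law}(Z_\infty)\big) &\le d_{BL}\big(\mathrm{Law}(Z_m),\mathrm{Law}(Z^\ell_m)\big) + d_{BL}\big(\mathrm{Law}(Z^\ell_m),\mathrm{Law}(Z^\ell_\infty)\big) \\ &\quad + d_{BL}\big(\mathrm{Law}(Z^\ell_\infty),\mathrm{Law}(Z_\infty)\big),
\end{align*}
and letting $m\to\infty$ and then $\ell\to\infty$, completes the argument.

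The main obstacle is exactly that $W$ is only assumed integrable: one cannot apply the continuous mapping theorem — or a Skorokhod coupling — to $W$ itself, because an arbitrarily small perturbation of its arguments may change $W$ by an order-one amount (e.g.\ when $W$ is an indicator). The resolution hinges on the first marginals being \emph{exactly} uniform for every $m$, which converts the $L_1$-approximation error into a quantity independent of $m$; the only extra bookkeeping is to absorb a possibly unbounded $\psi$ using tightness of $\{\mathrm{Law}(X^m_1)\}$.
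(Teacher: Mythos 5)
Your proof is correct and follows essentially the same route as the paper's: approximate $W$ in $L_1[0,1]^2$ by continuous kernels, use the continuous mapping theorem for the regularized kernels, exploit the exact uniformity of $(U^m_1,U^m_2)$ to make the $L_1$-approximation error uniform in $m$, and control the unboundedness of $\psi$ via tightness of $\{\mathrm{Law}(X^m_1,X^m_2)\}$. The only cosmetic difference is that you phrase the argument in the bounded-Lipschitz metric and truncate on $\{|\psi|\le R\}$, whereas the paper tests against $1$-Lipschitz $[0,1]$-valued functions and truncates on $\{X^m_1,X^m_2\in\mathcal{K}\}$ for a compact $\mathcal{K}$; these are equivalent.
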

\begin{proof}
	If $W$ is continuous, then the claim is immediate.
	For a general $W$, we proceed as follows:	
	Fix $\varepsilon>0$, and let $\mathcal{K}$ be a compact set such that $\PP(X^m_1\in \mathcal{K}, X^m_2\in \mathcal{K})\ge 1-\varepsilon$, which is again possible by tightness of $ \{\left(X^m_1, X^m_2\right)\}_{m \in \N} $. Let $g$ be a continuous function with  \[\|W-g\|_{L_1[0,1]^2}<\frac{\varepsilon}{1 \vee \sup_{x,y\in \mathcal{K}}|\psi(x,y)|}.\]  Then on the event $\left\{X^m_1\in \mathcal{K}, X^m_2\in \mathcal{K}\right\}$, we have
	\begin{equation*}
 \Big|W(U^m_1,U^m_2)\psi(X^m_1, X^m_2)-g(U^m_1,U^m_2)\psi(X^m_1, X^m_2) \Big|\le \varepsilon. 
	\end{equation*}
	Thus, for any continuous function $\phi:\mathbb{R} \to [0,1]$ which is $1$-Lipschitz, we have
	\begin{equation*}
		\Big|\E \phi\big(W(U^m_1,U^m_2)\psi(X^m_1, X^m_2)\big)-\E \phi\big(g(U^m_1,U^m_2)\psi(X^m_1, X^m_2)\big) \Big|\le 2\varepsilon.
	\end{equation*}
	Finally,
	\begin{equation*}
	\E \phi\big(g(U^m_1,U^m_2)\psi(X^m_1, X^m_2)\big)\to \E \phi(g(U^\infty_1,U^\infty_2)\psi(X^\infty_1, X^\infty_2)),
	\end{equation*}
	by the result for continuous functions. Thus
	\begin{equation*}
	\limsup_{m\to\infty}\Big|\E \phi(W(U^m_1,U^m_2)\psi(X^m_1, X^m_2))-\E \phi(W(U^\infty_1,U^\infty_2)\psi(X^\infty_1, X^\infty_2))\Big|\le 4\varepsilon.
	\end{equation*}
	Since $\varepsilon > 0$ is arbitrary, the proof of the lemma is complete.
\end{proof}

\noindent{\textbf{Proof of Theorem \ref{thm:graphons} \eqref{graphons conv}.}} 

We begin with some notation. For a measurable function $\psi : \R^2 \to \R$ which is bounded from above, define $M_n^\psi := \sup_{Q \in \Pprod(\R^n)}M_n^\psi(Q)$, where
\begin{align}  \begin{split}
		M_n^\psi(Q) &:= \sum_{i=1}^n \int_{\R} V(x) \, Q_i(dx)  +  \sum_{i,j=1}^n J_{ij} \int_{\R} \int_{\R} \psi(x,y)\,Q_i(dx)Q_j(dy)  -  \sum_{i=1}^n H(Q_i) \\
		&=  \sum_{i,j=1}^n J_{ij} \int_{\R} \int_{\R} \psi(x,y)\,Q_i(dx)Q_j(dy)  -  \sum_{i=1}^n H(Q_i\,|\,\rho).
	\end{split} \label{eq:MF.Gibbs-graphon}
\end{align}
Letting $\widetilde{K} : \R^2\to\R$ by $\widetilde{K}(x,y)=K(x-y)/2$, we are most interested in the choice $\psi=\widetilde{K}$, but treating a general $\psi$ will be helpful for a truncation argument.
Let $Q^*$ be as in Theorem \ref{th:main-intro}. 
With this notation, we have $\log \int_{\R^n} e^{f(x)}dx = M_n^{\widetilde{K}} + R_f$.
Corollary \ref{cor:quadratic} and the assumption that $\tr(J^2)=o(n)$ imply that $R_f/n \to 0$, and to prove Theorem \ref{thm:graphons} it will thus suffice to show that
\begin{align} \label{eq: MF.graphons.limit}
	\lim_{n \to \infty} M_n^{\psi}/n  =	\sup_{\mu\in \Punif} \big( T_{W,\psi}(\mu)-I(\mu)\big)
\end{align}
for any continuous function $\psi \le 0$.

	To this effect, use the assumption that $\{nJ\}_{n\ge 1}$ converges in weak cut metric to $W$ to conclude the existence of a sequence of permutations $\{\pi_n\}_{n\ge 1}$ with $\pi_n\in S_n$, such that $\{nJ^{(\pi_n)}\}_{n\ge 1}$ converges in strong cut metric to $W$, where  $J^{(\pi_n)}_{ij}:=J_{\pi_n(i)\pi_n(j)}$ for $1\le i,j\le n$. Since $\pi_n$ is a permutation, for any $Q =Q_1\times\cdots\times Q_n \in \Pprod(\R^n)$ we can write 
	\begin{align*}
		M_n^\psi(Q) 
		=&\sum_{i=1}^n \int_{\R} V(x)\tilde{Q}_{i}(dx)+\sum_{i,j=1}^nJ_{\pi_n(i)\pi_n(j)} \int_{\R}\int_{\R} \psi(x,y) \tilde{Q}_{i}(dx)\tilde{Q}_{j}(dy)-\sum_{i=1}^nH(\tilde{Q}_{i}),
	\end{align*}
	where $\tilde{Q}_i:=Q_{\pi_n(i)}\in \mathcal{P}(\R)$. Thus 
	\begin{equation*}
		\sup_{Q\in \Pprod(\R^n)}M_n^\psi(Q)=\sup_{\tilde{Q}\in \Pprod(\R^n)}\widetilde{M}_n^\psi(\tilde{Q}),
	\end{equation*}
	where $\widetilde{M}_n^\psi(\cdot)$ defined similarly to  $M_n^\psi(\cdot)$ in \eqref{eq:MF.Gibbs-graphon}, but with $J$ replaced by $J^{(\pi_n)}$. Since $nJ^{(\pi_n)}$ converges to $W$ in strong cut metric, by replacing $J$ with $J^{(\pi_n)}$ without loss of generality we assume throughout the rest of the proof that $nJ$ converges in strong cut metric to $W$.

To prove \eqref{eq: MF.graphons.limit}, we need the following construction which essentially embeds $\Pprod(\R^n)$ into $\Punif$ for all $n$.
For any $Q = Q_1\times\cdots\times Q_n\in \Pprod(\R^n)$, define a probability measure $\mu_n(Q)\in \Punif$ as follows: If $(U,X)\sim  \mu_n(Q)$, then $U\sim \mathrm{Unif}[0,1]$, and the conditional law of $X$ given $\{(i-1)/n < U \le i/n]\}$ is given by $Q_i$.  
Then we have
\begin{align*}
	T_{W_{nJ},\psi}(\mu_n(Q)) &= \frac{1}{n}\sum_{i,j=1}^n J_{ij} \int_\R \int_\R \psi(x,y) Q_i(dx) Q_j(dy),\\
	I(\mu_n(Q)) &= \frac{1}{n}\sum_{i=1}^nH(Q_i|\rho),
\end{align*}
and so 
\begin{equation}
	M_n^{\psi}(Q)/n=T_{W_{nJ},\psi}(\mu_n(Q))-I(\mu_n(Q)). \label{pf:graphon-Mn-identity}
\end{equation}

As a final preparation for the proof of \eqref{eq: MF.graphons.limit}, we argue that $\inf_n M_n^{\psi}/n > -\infty$. To see this, take $B \subset \R$ to be any compact set of positive $\rho$-measure, and define $\hat\rho \ll \rho$ by $d\hat\rho/d\rho = 1_B/\rho(B)$. Let $Q_i = \hat\rho$ for $i=1,\ldots,n$, and $Q=Q_1 \times \cdots \times Q_n$. Then
\begin{equation*}
	\frac{1}{n}\sum_{i=1}^n H(Q_i \,|\, \rho)= H(\hat\rho\,|\,\rho) = -\log \rho(B) < \infty,
\end{equation*}
and also 
\begin{equation*}
	T_{W_{nJ},\psi}(\mu_n(Q))  \ge - \|W_{nJ}\|_{L_1[0,1]^2}\sup_{x,y \in B}|\psi(x,y)|.
\end{equation*}
Since $\psi$ is continuous, it is bounded on the compact set $B$. Since $W_{nJ}$ converges in strong cut metric to $W$, we have $\|W_{nJ}\|_{L_1[0,1]^2} \to \|W\|_{L_1[0,1]^2}$, and thus the right-hand side is bounded. This proves that $\inf_n M_n^{\psi}/n > -\infty$.
We now prove the upper and lower bounds in \eqref{eq: MF.graphons.limit} separately.

\vskip.2cm

\noindent\textbf{Proof of the upper bound in \eqref{eq: MF.graphons.limit}}: 
Let $Q^n = Q^n_1 \times \cdots \times Q^n_n \in \Pprod(\R^n)$ be any near-optimizer of $M_n^\psi(\cdot)$, meaning
\begin{equation}
	M_n^{\psi}(Q^n) \ge M_n^\psi - o(n). \label{def:nearopt}
\end{equation}
Note that $ M_n^{\psi}(Q^n)/n$ is bounded from below by some constant $C$, as shown just above. Since $J$ has nonnegative entries and $\psi \le 0$, we have $T_{W_{nJ},\psi} \le 0$ which implies
\begin{align*}
	C \le M_n^{\psi}(Q^n)/n = T_{W_{nJ},\psi}(\mu_n(Q^n)) - I(\mu_n(Q^n)) \le - I(\mu_n(Q^n)).
\end{align*}
This implies $\sup_n I(\mu_n(Q^n)) < \infty$. Since the sub-level sets of $I$ are weakly compact, the sequence $(\mu_n(Q^n))$ has a limit point. Let $\mu_\infty$ be any limit point. Lower semicontinuity of $I(\cdot)$ gives $I(\mu_\infty)<\infty$. For each $m \in \N$, define $ \psi_m(x,y) \coloneqq \psi(x,y)1_{\{|x|,|y| \le m\}}$. Note that $\psi \le \psi_m \le 0$, and thus $T_{W_{nJ},\psi} \le T_{W_{nJ},\psi_m}$.  By part (1) of Lemma \ref{lem:properties}, 
\begin{align*}
	\sup_{\mu\in \Punif}\Big|T_{W_{nJ},\psi_m}(\mu)-T_{W,\psi_m}(\mu)\Big|\to 0,
\end{align*}
for all $ m \in \N $. Therefore, for all $ m$,
\begin{align*}
	\limsup_{n \to \infty} T_{W_{nJ}, \psi}(\mu_n(Q^n)) &\le \limsup_{n \to \infty} T_{W_{nJ},\psi_m}(\mu_n(Q^n)) \le \limsup_{n \to \infty} T_{W,\psi_m}(\mu_n(Q^n)) = T_{W,\psi_m}(\mu_\infty), 
\end{align*}
where the last step uses part (2) of Lemma \ref{lem:properties}.  The left-hand side above does not depend on $ m $, and thus
\begin{equation*}
	\limsup_{n \to \infty} T_{W_{nJ}, \psi}(\mu_n(Q^n)) \le \inf_{m \in \N} T_{W,\psi_m}(\mu_\infty) = T_{W,\psi}(\mu_\infty),
\end{equation*}
where the last equality follows from the monotone convergence theorem and the fact that $\psi_m \downarrow \psi$ pointwise.
Using the lower semicontinuity of $ I $, we deduce
\begin{align*}
	T_{W,\psi}(\mu_\infty) - I(\mu_\infty) 	&\ge \limsup_{n \to \infty} \big(T_{W_{n J},\psi} (\mu_n(Q^n)) - I(\mu_n(Q^n)) \big)   =\limsup_{n \to \infty} M_n^\psi (Q^n)/n.
\end{align*}
Bound the left-hand side by a supremum to prove the upper bound in \eqref{eq: MF.graphons.limit}. Moreover, once we prove \eqref{eq: MF.graphons.limit}, then this argument shows the following: for any near-optimizing sequence $Q^n = \Pprod(\R^n)$ in the sense of \eqref{def:nearopt}, the sequence $\{\mu_n(Q^n)\}$ is tight, and for any limit point $\mu_\infty$ of $\{\mu_n(Q^n)\}$ it holds that $\mu_\infty$ is an optimizer for the right-hand side of \eqref{eq: MF.graphons.limit}. 

{\ }

\noindent\textbf{Proof of the lower bound in \eqref{eq: MF.graphons.limit}}: 
To prove the lower bound in \eqref{eq: MF.graphons.limit}, we first claim that
\begin{align}
	\sup_{\mu\in \Punif} \big( T_{W,\psi}(\mu)-I(\mu)\big) = \sup_{\mu\in \Punif, \, \text{compact support}} \big( T_{W,\psi}(\mu)-I(\mu)\big). \label{pf:lowerbound-approx}
\end{align}
The inequality ($\ge$) is obvious.
To prove the reverse, let $\mu \in \Punif$ such that $I(\mu) < \infty$, and define $\mu^m \in \Punif$ with compact support by setting $d\mu^m/d\mu=1_{[0,1] \times [-m,m]}/\mu([0,1] \times [-m,m])$, which is well defined for large enough $m$. Then
\begin{align*}
	I(\mu_m) &= H(\mu^m\,|\,\overline{\mu}) = \int_{[0,1] \times \R} \log \frac{d\mu^m}{d\overline{\mu}} \,d\mu^m \\
	&=  \int_{[0,1] \times \R} \log \frac{d\mu^m}{d\mu} \,d\mu^m + \int_{[0,1] \times \R} \log \frac{d\mu}{d\overline{\mu}} \,d\mu^m.
\end{align*}
The second term converges to $I(\mu)$ by dominated convergence. The first term equals $-\log \mu([0,1] \times [-m,m])$ and vanishes as $m\to\infty$. Finally, since $W \ge 0$ and $\psi \le 0$, it is straightforward to check by monotone convergence that $T_{W,\psi}(\mu_m) \to T_{W,\psi}(\mu)$, and thus $T_{W,\psi}(\mu_m)-I(\mu_m) \to T_{W,\psi}(\mu)-I(\mu)$ as $m\to\infty$. This proves \eqref{pf:lowerbound-approx}.

Now, to prove the lower bound in \eqref{eq: MF.graphons.limit}, we let $\mu \in \Punif$ with compact support and with $I(\mu)<\infty$, and note that necessarily $\mu\ll  \overline{\mu}$. 
By defining $h(\cdot,\cdot):=\frac{d\mu}{d\overline{\mu}}$, we have $\int_{\R} h(u,\cdot)d\rho=1$ for a.e.\ $u \in [0,1]$ since both $\mu$ and $\overline{\mu}$ have uniform first marginal. For each $ i \in [n] $,  define $h^n_i : \R \to  [0,\infty)$ by 
\begin{equation*}
	h^n_i(x) \coloneqq n\int_{(i-1)/n}^{i/n} h(u,x) du.
\end{equation*}
By Fubini's theorem, $\int_{\R}h^n_i \, d\rho=1$ for all $i$.  We may thus define $Q^n = Q^n_1\times\cdots\times Q^n_n\in \Pprod(\R^n)$ by setting
$\frac{dQ^n_i}{d\rho} = h^n_i$, and define $\mu_n(Q^n)$ as before; note for later use the key identity $\frac{d\mu_n(Q^n)}{d\overline{\mu}}(u,x)=h_{\lceil nu\rceil}^n(x)$. If $\mathcal{K}$ denotes a compact interval such that $[0,1] \times \mathcal{K}$ contains the support of $\mu$, then $[0,1] \times \mathcal{K}$ also contains the support of $\mu_n(Q^n)$, and we may replace $\psi$ by $\psi 1_{\mathcal{K}^2}$ in the following argument.
Recalling the formula \eqref{pf:graphon-Mn-identity} for $M_n^\psi(Q)$, we may use  part (1) of Lemma \ref{lem:properties} to get
\begin{align} 
	T_{W,\psi}(\mu_n(Q^n))-\frac{1}{n}M_n^{\psi}(Q^n) -I(
	\mu_n(Q^n)) \to 0. \label{eq:graphon-LB1}
\end{align}
To complete the proof of the lower bound, we will show that
\begin{align}
	\lim_{n\to\infty} T_{W,\psi}(\mu_n(Q^n))= T_{W,\psi}(\mu), \quad \text{and} \quad I(\mu_n(Q^n))\le I(\mu), \ \forall n.	\label{eq:graphon-LB2}
\end{align}
Once \eqref{eq:graphon-LB2} is established, it will follow from the lower semicontinuity of $I$ that $I(\mu_n(Q^n))\to I(\mu)$, and we use \eqref{eq:graphon-LB1} to deduce
\begin{align*}
	\liminf_{n\to\infty}M_n^{\psi} / n \ge \lim_{n\to\infty}M_n^{\psi}(Q^n) / n = T_{W,\psi}(\mu) - I(\mu).
\end{align*}
This holds for every $\mu\in \Punif$ of compact support satisfying $I(\mu) < \infty$. Hence, taking the supremum and recalling \eqref{pf:lowerbound-approx} yields the desired lower bound in \eqref{eq: MF.graphons.limit}. 

It remains to prove \eqref{eq:graphon-LB2}. Note that
\begin{align*}
	\int_{[0,1] \times \R} \Big|\frac{d\mu}{d\overline{\mu}}-\frac{d\mu_n(Q^n)}{d\overline{\mu}}\Big|\,d\overline{\mu} &= \int_0^1 \int_\R |h(u,x) -  h^n_{\lceil nu\rceil}(x) |\,\rho(dx)du \\
	&= \E_{\overline{\mu}} \big| h(U,X)-\E_{\overline{\mu}}[ h(U,X)| \F_n]\big|,
\end{align*}
where $\F_n$ is the $\sigma$-field generated by $(\lceil nU\rceil, X)$. The right-hand side converges to $0$ by Levy's upwards convergence theorem, since $\E_{\overline{\mu}}|h(U,X)|=1<\infty$. Thus the probability measure $\mu_n(Q^n)$ converges in total variation to $\mu$, and the first claim in \eqref{eq:graphon-LB2} follows from part (2) of Lemma \ref{lem:properties}. To prove the second claim in \eqref{eq:graphon-LB2}, use convexity of $\varphi(x) \coloneqq x\log x$ for $x \ge 0$, along with Jensen's inequality, to get
\begin{align*}
	I(\mu_n(Q^n)) &= \E_{\overline{\mu}}\varphi( \E_{\overline{\mu}}[ h(U,X) | \F_n] ) \le \E_{\overline{\mu}}\varphi(  h(U,X)  ) = I(\mu).
\end{align*}
This proves \eqref{eq:graphon-LB2}, completing the proof of the lower bound, and thus Theorem \ref{thm:graphons}(1). \hfill \qed

\vskip.2cm

\noindent{\textbf{Proof of Theorem \ref{thm:graphons}\eqref{graphons weak law}.}}

We first discuss the optimization problem. The functional to be optimized can be written as
\begin{equation*}
	\Phi(\mu) := \frac12\int_{[0,1]\times \R}\int_{[0,1]\times \R} W(u,v)K(x-y)\mu(du,dx)\mu(dv,dy) - \int_0^1 H(\mu_u\,|\,\rho)\,du
\end{equation*}
We will show the existence of an optimizer via the weak upper semicontinuity: Since $W \ge 0$ and $K \le 0$, monotone convergence yields
\begin{align*}
2T_{W,\widetilde{K}}(\mu) =  \E_{\mu^{\otimes 2}}[W(U_1, U_2)K(X_1 - X_2)] = \inf_{m > 0} \E_{\mu^{\otimes 2}}\left[\big(W(U_1, U_2) K(X_1 - X_2) \big)\vee (-m)\right].
\end{align*}
For each $m$, the expectation appearing on the right-hand side is continuous as a function of $\mu \in \Punif$, by Lemma \ref{lem: TfW weak conv}. Hence, the left-hand side is upper semicontinuous. Since relative entropy is lower semicontinuous with compact sub-level sets, the existence of an optimizer follows.

We prove uniqueness of the optimizer via displacement convexity.  Letting $\widetilde{K}(x,y)=(1/2)K(x-y)$, we may rewrite $\Phi(\mu)=\Phi_1(\mu) + T_{W,\widetilde{K}}(\mu) - \Phi_2(\mu)$, where we define
\begin{equation*}
\Phi_1(\mu) := \int_0^1 \int_{\R} V(x)\,\mu_u(dx) du, \qquad  \Phi_2(\mu) := \int_0^1 H(\mu_u)\,du,
\end{equation*}
where we used the simple identity $H(\nu\,|\,\rho) = H(\nu) - \int_{\R^n} V\,d\nu$. 
Let $ \mu^0, \mu^1 \in \Punif$ be two optimizers, written in disintegrated form as $du\mu^i_u(dx)$ for $i=0,1$.
Let $F^i_u(x)=\mu^u_i(-\infty,x]$ denote the CDF, with generalized inverse $\overline{F}^i_u(y) := \inf\{x \in \R : y \le F^i_u(x)\}$. Then, for each $u \in [0,1]$, $T_u(x) := \overline{F}^1_u(F^0_u(x))$ denotes the unique nondecreasing function with $\mu^0_u \circ T_u^{-1} = \mu^1_u$. Since $F^i_u(x)$ is right-continuous in $x$ and measurable in $u$, it is jointly measurable in $(u,x)$, and the same is easily seen to be true for $\overline{F}^i_u(x)$ and thus $T_u(x)$. Consider the map $\overline{T} : [0,1] \times \R \to [0,1] \times \R$ given by $\overline{T}(u,x)=(u,T_u(x))$.
Define the interpolation $\mu^t := \mu^0 \circ ((1-t) \mathrm{Id} + t\overline{T})^{-1}$ for each $t \in [0,1]$. Then we have
\begin{align*}
	T_{W,\widetilde{K}} (\mu^t) &= \frac12 \int_{[0,1]\times \R}\int_{[0,1]\times \R} W(u,v) K(x-y) \mu^t (du,dx) \mu^t (dv,dy) \\
	&= \frac12 \int_{[0,1]\times \R}\int_{[0,1]\times \R} W(u,v) K\big((1-t)(x-y) + t(T_u(x)-T_u(y)) \big) \mu^0(du,dx) \mu^0(dv,dy).
\end{align*}
Since $ K $ is concave and $W \ge 0$, $t \mapsto T_{W,\widetilde{K}} (\mu^t)$ is concave. Note also that
\begin{align*}
	\Phi_2(\mu^t) = \int_0^1 H\big(\mu^0_u \circ ((1-t)\mathrm{Id} + tT_u)^{-1} \big)\,du
\end{align*}
is a convex function of $t$, by the  displacement convexity of entropy  \cite[Theorem 5.15(i)]{villani2003topics}. By the $\kappa$-concavity of $V$, the function $t \mapsto \Phi_1(\mu^t)$ is strictly concave, and we find that $t \mapsto \Phi(\mu^t)$ is strictly concave. Since $\mu^0$ and $\mu^1$ are both optimizers, we have $\Phi(\mu^0)=\Phi(\mu^1)$. Hence, we must have $\mu^0=\mu^1$, as otherwise the strict concavity would be contradicted.

With existence and uniqueness of the optimizer settled, we lastly prove the claim \eqref{eq: graphons weak law} in part \eqref{graphons weak law} of Theorem \ref{thm:graphons}.
Note that Theorem \ref{th:main-intro} implies uniqueness of the optimizer $Q^n$ in $\sup_{Q \in \Pprod(\R^n)}M_n(Q)$ for each $n$. 
Since $Q^n$ is optimal and thus a fortiori near-optimal, we may use the following fact proven in the course of proving the upper bound in Theorem \ref{thm:graphons}(1): The sequence $\{\mu_n(Q^n)\}$ is tight (since $Q^n$ is), and any limit point  is an optimizer for the right-hand side of \eqref{eq: MF.graphons.limit}. We have just shown the latter optimizer to be unique, and let us denote it $\mu^* \in \Punif$. Thus, $\mu_n(Q^n) \to \mu^*$ weakly.
From part (1) and Corollary \ref{co:linearstat-concentration}, for any bounded 1-Lipschitz function $\varphi : \R \to \R$   we have
\begin{equation*}
	\lim_{n \to \infty}\E_P\left[ \left(\frac{1}{n}\sum_{i=1}^n\varphi(X_i) - \frac{1}{n}\sum_{i=1}^n\E_{Q^n}[\varphi(X_i)]\right)^2\right] = 0.
\end{equation*}
Note that
\begin{equation*}
	\frac{1}{n}\sum_{i=1}^n\E_{Q^n}[\varphi(X_i)] = \frac{1}{n} \sum_{i=1}^n \int_{\R} \varphi(x) \,Q^n_i(dx) = \int_{[0,1] \times \R} \varphi(x) \,\mu_n(Q^n)(du, dx).
\end{equation*}
Using the weak convergence $\mu_n(Q^n) \to \mu^*$, the right-hand side converges to
\begin{equation*}
	\int_{[0,1] \times \R} \varphi(x) \mu^* (du, dx) = \int_\R \varphi \,dR^*, \ \ \text{where } R^* := \int_0^1 \mu^*_u\,du.
\end{equation*}
We deduce that $ \frac{1}{n} \sum_{i=1}^n \varphi(X_i) \to  \int_{\R} \varphi \,dR^*$ in probability for each bounded Lipschitz $\varphi$. This is enough to deduce the convergence in distribution $ \frac{1}{n} \sum_{i=1}^n \delta_{X_i} \to R^*$. \hfill \qed

\subsection{Proof of Lemma \ref{le:evenness}}
We first prove (1). When $f$ is even, we claim that (the density of) $Q^*$ is also even, which completes the proof because it implies $\E_{Q^*}[X_i]=0$ for all $i$.  To show that $Q^*$ is even, let $R_i(x):=Q^*_i(-x)$ for each $x \in \R$ and $i=1,\ldots,n$. Let $R=R_1\times \cdots \times R_n$. Then $\int_{\R^n} f\,dR=\int_{\R^n} f\,dQ$ by evenness of $f$, and clearly $H(Q)=H(R)$. Hence, $R$ is also an optimizer of \eqref{MFopt-mainthm}, and we deduce $R=Q^*$ by uniqueness of the optimizer.

We prove (2) by showing in this case that $Q^*_i=Q^*_j$ for all $i,j$. Suppose $f$ is invariant with respect to a transitive set $S$ of permutations of $[n]$. Fix $i,j \in \{1,\ldots,n\}$. Choose $\pi \in S$ such that $\pi(i)=j$, which is possible by the assumed transitivity of $S$. Let $R_k=Q^*_{\pi(k)}$ for each $k=1,\ldots,n$, and let $R = R_1 \times \cdots \times R_n$. The invariance of $f$ under $S$ ensures that $\int_{\R^n} f\,dR = \int_{\R^n} f\,dQ^*$.  Clearly, $H(R)=H(Q^*)$.  Hence, $R$ is also an optimizer of \eqref{MFopt-mainthm}, and we deduce that $R=Q^*$ by uniqueness. Since $\pi(i)=j$, this implies  $Q^*_i=R_i=Q^*_j$.

\section{Stochastic control proofs} \label{se:control-proofs}

As explained in Remark \ref{re:optimalcontrol}, the optimal admissible pair $(\alpha,X)$ for \eqref{control1} is given by
\begin{align}
	\alpha_g(t,x) &= \nabla_x \log \E[e^{n g(x+B_T-B_t)}], \label{Follmerdrift}
\end{align}
with $X=(X_t)_{t \in [0,T]}$ being the Brownian bridge with terminal law $P(dx)=Z^{-1}e^{ng(x)}\gamma_T(dx)$.
Letting $\PP$ denote the Wiener measure on $C([0,T];\R^n)$, the law $\QQ^P$ of this process $X$ can be characterized as the unique minimizer of $\QQ \mapsto H(\QQ\,|\,\PP)$ among $\QQ$ with time-$T$ marginal equal to $P$; see \cite[Proposition 6]{baudoin2002conditioned} or \cite[Lemma 10]{lehec2013representation}. This minimizer satisfies
\begin{align} \label{eq: entropy.minimizer}
	H(\QQ^P \, | \, \PP) =  H(P \, | \, \gamma_T) = \frac{1}{2}\E \left[\int_0^T |\alpha_g(t, X_t)|^2 \, dt\right].
\end{align}
Note that $H(P\,|\,\gamma_T) < \infty$, and so the pair $(\alpha_g,X)$ is admissible in the sense of Section \ref{se:control}.

\begin{proof}[Proof of Corollary \ref{co:stochcontrol}]
	Once the formulas \eqref{BBD} and \eqref{BBD-MF} are established,  the final claim follows immediately from Corollary \ref{co:refmeas}, applied with $V_i(x)=-x^2/(2T)$ for $i=1,\ldots,n$ and $\kappa=1/T$.

	To prove \eqref{BBD} and \eqref{BBD-MF}, we begin with the inequality ($\le$). Let $(\alpha,X)$ denote any admissible pair, and let $\QQ$ denote the law of $X=(X_t)_{t \in [0,T]}$.
	A well known argument using Girsanov's theorem \cite[Proposition 1]{lehec2013representation} yields
	\begin{align*}
		H(\QQ\,|\,\PP) &\le \frac12\E\int_0^T|\alpha(t,X_t)|^2\,dt = \frac12 \sum_{i=1}^n \E\int_0^T |\alpha_i(t,X_t)|^2dt.
	\end{align*}
	With $\QQ_T$ denoting the law of $X_T$, note that marginalizing (at time $T$) does not increase entropy: $H(\QQ\,|\,\PP) \ge H(\QQ_T\,|\,\gamma_T)$. Thus,
	\begin{align*}
		\E\Bigg[ g(X_T) -  \frac{1}{2n} \sum_{i=1}^n \int_0^T |\alpha_i(t,X_t)|^2dt\Bigg] &\le \int_{\R^n} g\,d\QQ_T - \frac{1}{n}H(\QQ_T\,|\,\gamma_T) \\
		&\le \sup_{Q \in \P(\R^n)} \left(\int_{\R^n} g\,dQ - \frac{1}{n}H(Q\,|\,\gamma_T)\right).
	\end{align*}
	Taking a supremum over all admissible pairs $(\alpha,X)$ proves the inequality ($\le$) in \eqref{BBD}.
	Now, if $(\alpha,X)$ is an \emph{distributed} admissible pair, then the same chain of inequalities holds, but also $\QQ_T$ is a product measure. We can thus deduce \eqref{BBD-MF} in the same manner.

		The inequality ($\ge$) in \eqref{BBD} and \eqref{BBD-MF} follows quickly from the entropy identity \eqref{eq: entropy.minimizer}. Starting with \eqref{BBD}, let $X=(X_t)_{t \in [0,T]}$ be the Brownian bridge with terminal law   $P(dx)=Z^{-1}e^{ng(x)}\gamma_T(dx)$. Let $\alpha_g$ be given as in \eqref{Follmerdrift}. By the Gibbs variational principle \cite[Proposition 1.4.2]{dupuis2011weak}, the supremum in \eqref{BBD} is attained by $Q=P$. Using $X_T \sim P$ and \eqref{eq: entropy.minimizer}, we obtain
		\begin{align*}
			\sup_{Q \in \P(\R^n)} \left(\int_{\R^n} g\,dQ - \frac{1}{n}H(Q\,|\,\gamma_T)\right) &= \int_{\R^n} g\,dP - \frac{1}{n}H(P\,|\,\gamma_T)  \\
			&= \E\bigg[ g(X_T) - \frac{1}{2n}\int_0^T |\alpha_g(t,X_t)|^2\,dt\bigg] \le V_{\mathrm{orig}}.
		\end{align*}
		This proves ($\ge$) in \eqref{BBD}, and also proves that $(\alpha_g,X)$ is optimal.
		Similarly, to prove the inequality ($\ge$) in \eqref{BBD-MF}, let $Q^* \in\Pprod(\R^n)$ be the unique optimizer in \eqref{BBD-MF}, which we know by Corollary \ref{co:refmeas} to take the form stated in Corollary \ref{co:stochcontrol}. Let $X=(X_t)_{t \in [0,T]}$ be the Brownian bridge with terminal law $Q^*$. Define $\alpha_{\widehat{g}}$ as in \eqref{Follmerdrift}, with $\widehat{g}(x)=\sum_{i=1}^n\E_{Q^*}[g(X)|X_i=x_i]$ in place of $g$. Using $X_T \sim Q^*$ and \eqref{eq: entropy.minimizer}, we obtain
		\begin{align*}
			\sup_{Q \in \Pprod(\R^n)} \left(\int_{\R^n} g\,dQ - \frac{1}{n}H(Q\,|\,\gamma_T)\right) &= \int_{\R^n} g\,dQ^* - \frac{1}{n}H(Q^*\,|\,\gamma_T)  \\
			&= \E\bigg[ g(X_T) - \frac{1}{2n}\int_0^T |\alpha_{\widehat{g}}(t,X_t)|^2\,dt\bigg] \le V_{\mathrm{distr}}.
		\end{align*}
		Indeed, note that $(\alpha_{\widehat{g}},X)$ is an admissible distributed pair because $Q^*$ is a product measure.
		This proves ($\ge$) in \eqref{BBD-MF}, and also proves that $(\alpha_{\widehat{g}},X)$ is optimal. 
\end{proof}

\subsection*{Acknowledgment}
We thank Ronen Eldan for helpful discussions and comments.
\bibliographystyle{amsplain}

\bibliography{biblio}
\end{document}